\newtheorem{theorem}{Theorem}[section]
\newtheorem{prop}[theorem]{Proposition}
\newtheorem{lemma}[theorem]{Lemma}
\newtheorem{cor}[theorem]{Corollary}
\newtheorem{conj}[theorem]{Conjecture}
\newtheorem{thmi}{Theorem}
\newcommand{\repeatlabel}{}
\newtheorem*{repeatlemma}{Lemma \repeatlabel}
\theoremstyle{definition}
\newtheorem{problem}{Problem}
\newtheorem{fact}[theorem]{Fact}
\newtheorem{defn}[theorem]{Definition}
\newtheorem{claim}[theorem]{Claim}
\newcommand{\<}{\subset}
\newcommand{\RT}{{\bf{RT}}}
\newcommand{\RTT}{{\bf{RTT}}}
\newcommand{\cF}{\mathcal{F}}
\newcommand{\cB}{\mathcal{B}}
\newcommand{\De}{\Delta}
\newcommand{\de}{\delta}
\newcommand{\be}{\beta}
\newcommand{\ze}{\zeta}
\newcommand{\ga}{\gamma}
\def \A{\mathcal{A}}
\def \cP{\mathcal{P}}
\def \F{\mathcal{F}}
\def \Z{\mathcal{Z}}
\def \N{\mathbb{N}}
\DeclarePairedDelimiter{\floor}{\lfloor}{\rfloor}
\DeclarePairedDelimiter{\ceil}{\lceil}{\rceil}
\title{A Ramsey--Tur\'an theory for tilings in graphs} 
\author{Jie Han\thanks{School of Mathematics and Statistics, Beijing Institute of Technology, Beijing, China, Email: {\tt hanjie@bit.edu.cn}}, Patrick Morris\thanks{Freie Universit\"at Berlin and Berlin Mathematical School, Germany, Email: {\tt pm0041@mi.fu-berlin.de}.
Research supported by the Deutsche Forschungsgemeinschaft (DFG, German Research
Foundation) under Germany's Excellence Strategy - The Berlin Mathematics
Research Center MATH+ (EXC-2046/1, project ID: 390685689). }, Guanghui Wang\thanks{School of Mathematics, Shandong University, Jinan, China, Email: {\tt ghwang@sdu.edu.cn}. Research supported bye Natural Science
Foundation of China (11871311, 11631014) and seed fund program
for international research cooperation of Shandong University.}, Donglei Yang\thanks{Data Science Institute, Shandong University, Jinan, China, Email: {\tt dlyang@sdu.edu.cn}.}
}
\date{}
\begin{document}
\maketitle

\begin{abstract}

For a $k$-vertex graph $F$ and an $n$-vertex graph $G$, an $F$-tiling in $G$ is a collection of vertex-disjoint copies of $F$ in $G$. For $r\in \N$, the $r$-independence number of $G$, denoted $\alpha_r(G)$, is the largest size of a $K_r$-free set of vertices in $G$. In this paper, we discuss Ramsey--Tur\'an-type theorems for tilings where one is interested in minimum degree and independence number conditions (and the interaction between the two) that guarantee the existence of optimal  $F$-tilings. For cliques, we show that for any $k\geq 3$ and $\eta>0$, any graph $G$ on $n$ vertices with $\delta(G)\ge \eta n$ and $\alpha_k(G)=o(n)$ has a $K_k$-tiling covering all but $\floor{\tfrac{1}{\eta}}(k-1)$ vertices.
All conditions in this result are tight; the number of vertices left uncovered can not be improved and,
for $\eta<\tfrac{1}{k}$, a condition of $\alpha_{k-1}(G)=o(n)$ would not suffice. When $\eta>\tfrac{1}{k}$, we then show that  $\alpha_{k-1}(G)=o(n)$ does suffice, but not $\alpha_{k-2}(G)=o(n)$.  These results unify and generalise previous results of Balogh--Molla--Sharifzadeh~\cite{balog16}, Nenadov--Pehova~\cite{nenadov20} and  Balogh--McDowell--Molla--Mycroft~\cite{balog18} on the subject. We further explore the picture when $F$ is a tree or a cycle and discuss the effect of replacing the independence number condition with  $\alpha^*(G)=o(n)$ (meaning that any pair of disjoint linear sized sets induce an edge between them) where one can force perfect $F$-tilings covering all the vertices. Finally we discuss the consequences of these results in the randomly perturbed graph setting.


\end{abstract}


\section{Introduction}

For a $k$-vertex graph $F$ and an $n$-vertex graph $G$, an \emph{$F$-tiling} is a collection of vertex-disjoint copies of $F$ in $G$. We call an $F$-tiling \emph{perfect} if it covers the vertex set of $G$ (note that if a perfect tiling exists, one must have $n\in k \N$). We will also refer to a perfect $F$-tiling as an \emph{$F$-factor}.  As a natural generalisation of  matchings in a graph (when $F$ is a single edge),  $F$-tilings   are a  fundamental object in 
graph theory with a wealth of results studying various aspects and variants. 
From an extremal perspective, the natural question is as follows.

\begin{problem} \label{p extremal}
  Let $F$ be a graph with $k$ vertices. What minimum degree condition on a graph $G$ with $n\in k\mathbb{N}$ vertices, guarantees the existence of a perfect $F$-tiling? 
\end{problem}

For the special case of cliques, that is when $F=K_k$, the answer to Problem \ref{p extremal} was given by Hajnal and Szemer\'edi \cite{hs} who showed that any $n$-vertex graph $G$ with $\delta(G)\geq \left(1-\tfrac{1}{k}\right)n$ contains a perfect  $K_k$-tiling. 
The case of triangle factors (i.e. $k=3$) was previously shown by Corr\'adi and Hajnal~\cite{corradi1963maximal}. For general $F$, a large body of work culminated in  K\"uhn and Osthus \cite{kuhn09} solving Problem \ref{p extremal} up to an additive constant in the minimum degree condition.

For all these results the necessary minimum degree is large, requiring that $\delta(G)\geq \eta_F v(G)$ for some $\eta_F\geq \tfrac{1}{2}$, and extremal constructions  show that these conditions are necessary. 
However, it is known that such constructions are well-structured and thus far from being typical. For example, in the context of perfect $K_3$-tilings, for which Corr\'adi and Hajnal~\cite{corradi1963maximal} showed that $\eta_{K_3}=\tfrac{2}{3}$, one can take as an extremal graph, a complete tripartite graph with vertex sets of size $\tfrac{n}{3}-1, \tfrac{n}{3},$ and $\tfrac{n}{3}+1$. 
In particular, although this graph and indeed other extremal graphs, are very dense, they contain large independent sets.
This has motivated recent trends in extremal graph theory which have focused on reducing the minimum degree condition necessary by adding an extra condition that provides pseudorandom properties. In this direction, tilings have been explored in popular (strong) notions of pseudorandomness given by so-called bijumbled graphs and $(n,d,\lambda)$-graphs  \cite{HKMP18,HKP18b,krivele04,confversion,Nen18}, as  well as in randomly perturbed graphs\footnote{We discuss this model  in more detail in Section \ref{sec:perturbed}.} \cite{BTW_tilings,bottcher2020triangles,bottcher2021cycle,han2019tilings}, where one is interested in the amount of random perturbation needed to ensure that a dense graph contains a given tiling. Perhaps the weakest pseudorandom condition one can impose on the host graph is to simply block the existence of large independent sets. The following question was proposed by Balogh, Molla and Sharifzadeh \cite{balog16}. 

\begin{problem}\label{p1}
  Let $F$ be a graph with $k$ vertices. What minimum degree on an $n$-vertex graph $G$ with $n\in k \mathbb{N}$ and  $\alpha(G)=o(n)$ guarantees a perfect $F$-tiling? 
\end{problem}

Balogh, Molla and Sharifzadeh also  provided the first result related to Problem \ref{p1} by showing that for any $\eta>\tfrac{1}{2}$, an $n$-vertex graph $G$ with $n\in 3\mathbb{N}$, $\delta(G)\geq \eta n$ and  $\alpha(G)=o(n)$, contains a $K_3$-factor. Their result is tight, as discussed shortly, and shows that adding the condition of small independence number gives a significant reduction in the minimum degree needed to force a triangle factor, from $\tfrac{2n}{3}$, as shown by Corr\'adi and Hajnal~\cite{corradi1963maximal}, to roughly $\tfrac{n}{2}$. More recently, Knierim and Su \cite{knier21} resolved Problem \ref{p1} for all larger cliques $K_k$.
Problem \ref{p1} was inspired by the analogous question for Tur\'an problems where one is interested in the density needed to force the existence of a fixed sized subgraph $H$. Again, the extremal examples are far from typical and  contain large independent sets. Imposing an upper bound on the independence number then leads to improvements on the density needed. This field, known as Ramsey--Tur\'an theory, was initiated by
Erd\H{o}s and S\'{o}s \cite{Erdos70} and led to a wealth of results, see e.g. \cite{sim-sos01} for an overview. One fruitful direction was introduced by Erd\H{o}s, Hajnal, S\'os and Szemer\'edi \cite{Erdos83}. They suggested to strengthen the  condition on independence number slightly, by requiring that $\alpha_r(G)=o(n)$, where $\alpha_r(G)$ denotes the largest size of a $K_r$-free set in $G$. In more detail, for a fixed graph $H$, define\footnote{This function is usually denoted by $\Theta_r(H)$ and  defined (equivalently) in terms of a limit of Ramsey--Tur\'an numbers, see e.g. \cite{balogh2013ramsey}. } the function $\RT_r(H)$ to be the  maximum  $\tau_0\ge0$ such that for any $\tau<\tau_0$ and $\alpha>0$  there exist  $H$-free graphs $G$ with $n$ vertices, $e(G)\ge\tau n^2$ and  $\alpha_{r}(G) \leq \alpha n$ for all sufficiently large $n$. Much research \cite{bollobas1976ramsey,Erdos83,Erdos70,sim-sos01,szemeredi1972graphs} has focused on 
establishing the value of $\RT_r(H)$ for various $r\in \N$ and graphs $H$.

As an analogue of Problem \ref{p1} and taking further inspiration from Ramsey--Tur\'an theory, Nenadov and Pehova \cite{nenadov20} proposed to study the effect of imposing the stronger condition that $\alpha_r(G)=o(n)$. They put forward the following question. 

\begin{problem}\label{p2}
  Let $k \ge r \ge2$ be integers and $F$ a $k$-vertex graph. What minimum degree on an $n$-vertex graph $G$ with  $n\in k \mathbb{N}$ and $\alpha_r(G)=o(n)$ guarantees a perfect $F$-tiling?
\end{problem}

Nenadov and Pehova \cite{nenadov20} proved upper and lower bounds on the minimum degree when $F=K_k$.
In particular, they showed the following generalisation of the aforementioned result of Balogh, Molla and Sharifzadeh \cite{balog16}.

\begin{theorem}\emph{\cite{nenadov20}} \label{thm:Nen-Peh} For any integer  $k\geq 3$ and $\tfrac{1}{2}<\eta\leq 1$  there exists  $\alpha>0$ such that any $n$-vertex graph $G$ with $n\in k\mathbb{N}$, $\de(G)\ge \eta n$ and $\alpha_{k-1}(G)\le \alpha n$ contains a $K_k$-factor. 
\end{theorem}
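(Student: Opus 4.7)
The plan is to apply the absorption method in the style of R\"odl--Ruci\'nski--Szemer\'edi. Fix constants $0 < \alpha \ll \beta \ll \eta - \tfrac{1}{2}$. The proof splits into three standard ingredients: a supersaturation step, an absorbing lemma, and an almost-perfect covering, combined at the end.

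First I would establish a \emph{clique supersaturation}: every vertex $v$ lies in $\Omega(n^{k-1})$ copies of $K_k$. Since $\delta(G) \geq \eta n$ the neighbourhood $N(v)$ has size at least $\eta n$, and since $\alpha_{k-1}(G[N(v)]) \leq \alpha n \ll \eta n$, an iterative Ramsey--Tur\'an-style counting argument yields $\Omega(n^{k-1})$ copies of $K_{k-1}$ in $N(v)$, each giving a $K_k$ through $v$. More importantly, any edge $uv$ whose common neighbourhood has size at least $(2\eta - 1)n > \alpha n$ (which holds for \emph{every} edge by $\eta > \tfrac{1}{2}$) is contained in $\Omega(n^{k-2})$ copies of $K_k$.

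Using this supersaturation I would next build an \emph{absorbing set} $A \subseteq V(G)$ with $|A| \leq \beta n$ and $|A| \equiv 0 \pmod k$, such that for every $U \subseteq V(G) \setminus A$ with $|U| \leq \beta^2 n$ and $|U| \in k\N$, the graph $G[A \cup U]$ has a $K_k$-factor. The gadget is a \emph{vertex-absorber}: a bounded-size set $X$ together with two distinguished $K_k$-tilings, one of $G[X]$ and one of $G[X \cup \{v\}]$, differing in a single clique through a local swap. For each $v$, exhibiting $\Omega(n^c)$ such absorbers of some fixed size follows by iteratively producing swap partners at a single edge, which is exactly where $\eta > \tfrac{1}{2}$ and the edge-supersaturation step are used. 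The absorbing set $A$ is then produced probabilistically (either by Montgomery-style templates, or by sampling absorbers independently with small probability and pruning overlaps via Chernoff and Markov bounds).

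Third, I would prove an \emph{almost-perfect tiling} of the remaining graph $G' = G - A$, which still satisfies $\delta(G') \geq (\eta - \beta) n$ and $\alpha_{k-1}(G') \leq \alpha n$. The goal is a $K_k$-tiling of $G'$ leaving out a set $U$ of size at most $\beta^2 n$. This can be obtained greedily: as long as the current graph has more than $\beta^2 n$ vertices, the supersaturation step (applied to the current graph, which still verifies the hypotheses quantitatively) produces a $K_k$ to remove. Alternatively, one may apply Szemer\'edi's regularity lemma to solve a fractional $K_k$-cover in the reduced graph, converting it to an integer tiling in the standard way. Combining the almost-tiling with the absorbing property of $A$ finishes the proof, since $|V(G)| \equiv 0 \pmod k$ and $|A| \equiv 0 \pmod k$ force $|U| \in k\N$. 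The main obstacle is the absorber construction for $k \geq 4$: the natural attempt of pinning $k-1$ vertices and extending fails because the common neighbourhood of $k-1$ vertices can already be empty once $\eta < 1 - \tfrac{1}{k}$. The workaround is to build absorbers by piecewise swaps on a \emph{single edge} at a time, which only ever requires that common neighbourhoods of \emph{pairs} remain linear --- precisely what $\eta > \tfrac{1}{2}$ provides. Executing this swap procedure to produce sufficiently many absorbers per vertex with controlled overlap is the technical heart of the argument.
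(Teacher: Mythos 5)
Your approach matches the one used by Nenadov and Pehova in the cited paper (and the present paper does not re-prove Theorem~\ref{thm:Nen-Peh}; it derives it later from the stronger Theorem~\ref{thm:K_k alpha_k-1}, whose lattice-based machinery collapses to essentially your sketch when $\eta>\tfrac12$ because the reachability partition is then trivially the single part $V(G)$). The Montgomery-template absorption and the observation that $\eta>\tfrac12$ makes \emph{every} pair of vertices reachable through their linear-sized common neighbourhood (inside which a $K_{k-1}$ exists by $\alpha_{k-1}(G)\le\alpha n$, serving as a connector) are exactly the right ideas. However, there are two issues you should address.

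The substantive gap is in the almost-perfect tiling step. Your greedy argument --- ``keep removing a $K_k$ while more than $\beta^2 n$ vertices remain'' --- is not valid. The hypothesis $\alpha_{k-1}(G[S])\le\alpha n$ is inherited by every induced subgraph, but by itself it does \emph{not} force $G[S]$ to contain a $K_k$; indeed $\RT_{k-1}(K_k)=0$ says precisely that there exist $K_k$-free graphs with $\alpha_{k-1}=o(n)$ on linearly many vertices. What saves the greedy process while $|S|$ is large is the inherited minimum degree: if $|S|>(1-\eta+\varepsilon)n$ then $\delta(G[S])\ge(\eta+|S|/n-1)n$ is linear, forcing $\Omega(|S|^2)$ edges, which is incompatible with $G[S]$ being $K_k$-free when $\alpha_{k-1}(G[S])\le\alpha n$ is small (Erd\H{o}s--S\'os). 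But once $|S|$ drops near $(1-\eta)n$ --- which for $\eta$ just above $\tfrac12$ is still a constant fraction of $n$, far above $\beta^2 n$ --- you lose control of the degrees in $G[S]$ and the greedy can stall. So you cannot absorb the leftover with an $O(\beta n)$-sized absorber. The regularity-lemma alternative you mention in passing is what is actually needed: the paper handles this in Proposition~\ref{prop:almost factors2} by applying Lemma~\ref{reg} to get a reduced graph of minimum degree $\ge\ell/k$, invoking Theorem~\ref{H-tiling} to tile it with $K_{1,k-1}$'s, and then converting each star to a near-perfect $K_k$-tiling of the corresponding clusters using $\epsilon$-regularity together with $\alpha_{k-1}\le\alpha n$.

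Two smaller points. First, your ``vertex-absorber'' definition (both $G[X]$ and $G[X\cup\{v\}]$ have $K_k$-factors) is impossible on divisibility grounds; the object you want is either a connector for a \emph{pair} of vertices, or an $(F,t)$-absorber for a $k$-\emph{set} $S$ as in Definition~\ref{def:absorber}. Second, you offer ``sampling absorbers independently with small probability'' as an alternative to Montgomery templates, but this requires $\Omega(n^a)$ absorbers of each size $a$ per $k$-set, and as Balogh--Molla--Sharifzadeh already noted, the hypotheses here do not guarantee that. The template-based approach only needs $\Omega(n)$ vertex-disjoint absorbers, which your connector construction does produce (greedily peel $K_{k-1}$'s off a common neighbourhood of size $(2\eta-1)n\gg\alpha n$); stick to that route.
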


\subsection{Quasiperfect tilings} \label{sec:quasi}

A prominent theme in Ramsey--Tur\'an theory has been to establish for certain $r$ and $H$, whether 
$\RT_r(H)$ is non-zero, see for instance \cite{balogh2013ramsey,Erdos83,Erdos70,sim-sos01}. In  words, one is interested in whether it is possible to have dense graphs which have copies of $K_r$ in every linear sized set of vertices whilst avoiding a copy of $H$. In this paper, our aim is to study the analogous question for $K_k$-tilings, and more generally, $F$-tilings. We are therefore interested in graphs $G$  which have small linear minimum degree. In this range, however, one can not hope to find $F$-factors. Indeed, for any $\eta<\tfrac{1}{2}$ and $n$ sufficiently large, one can take $G$ to be the union of two disjoint cliques whose sizes add to $n$. Such a $G$ has the property that $\alpha_r(G)< 2r$ for all fixed $r\in \mathbb{N}$ whilst choosing the sizes of the cliques in $G$ appropriately, we can guarantee that $\delta(G)\geq \eta n$ and that any $F$-tiling will leave uncovered vertices. Therefore, we need to relax our expectations, moving away from studying $F$-factors and instead studying tilings which are as large as possible, in that they cover all but a constant number of vertices. 
 Such tilings we will call \emph{quasiperfect tilings}.

\begin{defn} \label{def:quasiperfect}
Given $\eta\in(0,1]$, a  $k$-vertex graph $F$ and an $n$-vertex graph $G$, we say an $F$-tiling in $G$ is \emph{$\eta$-quasiperfect} if it covers all but at most $\ell(k-1)$ vertices of $G$ where $\ell:=\floor{\tfrac{1}{\eta}}$. When $\eta$ is clear from context, we will simply call the tiling quasiperfect.
\end{defn}

Notice that when $\eta>\tfrac{1}{2}$, an $F$-tiling is $\eta$-quasiperfect if it leaves at most $k-1$ vertices uncovered. Therefore, with an additional condition that $n\in k \N$, a quasiperfect tiling is in fact a perfect tiling. For (almost) all values of $\eta$ the definition of quasiperfect tilings captures the largest possible size of a tiling we can hope for when looking at graphs $G$ with $\delta(G)\ge \eta n$ and some bound on the independence number. Indeed, generalising the construction above, if $\tfrac{\delta(G)}{n}=\eta$ and $\tfrac{1}{\eta}\notin \N$, we can take $G$ to be $\ell=\floor{\tfrac{1}{\eta}}$ disjoint cliques of equal size $\tfrac{n}{\ell}$ and choose $n$ such that $\tfrac{n}{\ell}$ is equal to  $(k-1) \mod k$. For a connected $F$, there are no copies of $F$ using more than one clique in $G$ and in each clique  any $F$-tiling must leave  $k-1$ vertices uncovered due to divisibility constraints. 

It turns out that the divisibility constraints given by the construction outlined above are the `worst-case scenario' when we impose an appropriate independence condition. Indeed, our results will show that we can always guarantee a quasiperfect tiling. The first result of this kind was predicted by Alon and proven by Balogh, McDowell, Molla and Mycroft \cite{balog18} who showed the following.

\begin{theorem}\emph{\cite{balog18}}\label{thm:almostk3}
  For every $\eta > \tfrac{1}{3}$ there exists $\alpha>0 $ such that every graph $G$ on $n$ vertices with $\de(G)\ge \eta n$ and $\alpha(G)\le \alpha n$
contains a $K_3$-tiling covering all but at most $4$ vertices.
\end{theorem}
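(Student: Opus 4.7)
The plan is to prove Theorem~\ref{thm:almostk3} via the absorbing method combined with a stability-type dichotomy. The argument I envisage has four main steps.

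\textbf{Step 1 (Structural dichotomy).} Apply Szemer\'edi's regularity lemma to $G$ with suitably small regularity parameter and pass to the reduced graph $R$, which inherits $\delta(R) \ge (\eta - o(1))|V(R)|$ and $\alpha(R) = o(|V(R)|)$. I would then establish a dichotomy: either (a) $R$ is close (in edit distance) to a disjoint union of two cliques, so that $G$ is close to the extremal two-clique construction described after Definition~\ref{def:quasiperfect}, or (b) $R$ is sufficiently ``connected'' that it admits a fractional $K_3$-tiling covering all but an arbitrarily small fraction of its weight.

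\textbf{Step 2 (Extremal case).} In case (a), I would exploit the near-decomposition of $G$ into two large pieces $V_1, V_2$, each with high minimum degree within itself and small independence number (both inherited from $G$). Applying the Balogh--Molla--Sharifzadeh result (or Hajnal--Szemer\'edi after a small amount of cleaning) inside each $V_i$ yields a $K_3$-tiling of each piece leaving at most $2$ vertices, totalling at most $4$ uncovered, as required. Cross-edges between $V_1$ and $V_2$ may allow one to save vertices, but in the worst case they are not needed.

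\textbf{Step 3 (Non-extremal case: absorbing).} In case (b), I would build a constant-sized absorbing set $A\subset V(G)$ with the property that for every $L\subseteq V(G)\setminus A$ of size at most $\beta n$, the induced subgraph $G[A\cup L]$ contains a $K_3$-tiling covering all but a constant number of vertices. Following the R\"odl--Ruci\'nski--Szemer\'edi paradigm, one shows that every vertex $v$ has $\Omega(n^c)$ absorbers, that is, short gadgets which can either incorporate $v$ into a triangle or not, and then samples $A$ randomly, using concentration. The lower bound on the number of absorbers per vertex is precisely where $\delta(G)\ge \eta n$ and $\alpha(G)=o(n)$ combine: together they force many triangles through each vertex and enough flexibility to chain them.

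\textbf{Step 4 (Non-extremal case: finish).} Using the near-perfect fractional tiling of $R$ from case (b), I would find a $K_3$-tiling of $G\setminus A$ covering all but $o(n)$ vertices by the standard method of rounding a fractional cluster tiling into an integer tiling between triples of regular clusters. Applying the absorbing property of $A$ to the remaining $o(n)$ vertices yields a $K_3$-tiling of $G$ leaving at most a constant number of vertices; a final accounting, using $\alpha(G)=o(n)$ to rule out structural obstructions beyond those in the extremal case, should bring this down to at most $4$.

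\textbf{Main obstacle.} The principal difficulty lies in Step~1, namely the Ramsey--Tur\'an-type statement that every $R$ with $\delta(R)>|V(R)|/3$ and $\alpha(R)=o(|V(R)|)$ that is not close to two disjoint cliques already contains a near-perfect fractional $K_3$-tiling. This is the critical threshold $\eta=\tfrac{1}{3}$ for triangle tilings and is the place where the $o(n)$ bound on $\alpha(G)$ is genuinely exploited, rather than the minimum-degree condition alone. A secondary difficulty is tuning the absorbing structure in Step~3 so that the final uncovered set has size at most $4$ rather than a larger constant; this likely requires the absorbing gadgets to handle arbitrary divisibility residues modulo~$3$ and to interact correctly with any latent two-clique structure detected in Step~1.
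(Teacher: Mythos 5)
Your Step~3 has a genuine gap that is in fact the central obstacle this paper (and its predecessor~\cite{balog16}) was written to overcome. You propose to show ``every vertex $v$ has $\Omega(n^c)$ absorbers'' and then sample a random absorbing set, following the R\"odl--Ruci\'nski--Szemer\'edi paradigm. But at minimum degree just above $\tfrac{n}{3}$ this polynomial count of absorbers simply does not hold: there can be $3$-sets (and vertices) for which the number of small gadgets incorporating them is far below any $n^c$, and indeed the paper states this explicitly at the start of Section~\ref{sec:Abs sets}. Both~\cite{balog16} and the present paper circumvent this by using Montgomery's bipartite-template absorption (Lemmas~\ref{robust} and~\ref{bip-temp+}), which needs only \emph{linearly many vertex-disjoint} absorbers for each $k$-set, a much weaker hypothesis. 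Even that weaker hypothesis fails globally here (a host graph consisting of two cliques is a counterexample), which is why the paper further introduces a reachability partition $\{V_1,\dots,V_C\}$ into $(F,\beta n,t)$-closed parts (Lemma~\ref{lem:detectpartition}) and builds a \emph{separate} absorbing set $A_i$ for each part, obtaining the bound of $2$ uncovered vertices per part and hence $4$ overall since $C\le 2$. Without replacing Step~3 by one of these ideas, the proposal does not go through.

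Beyond that, your structural route differs genuinely from the paper's and carries its own unproved burdens. The paper has no stability dichotomy at all: the closed-partition machinery uniformly handles both the ``near two cliques'' configuration and everything else, and the extremal case is never treated separately. Your Step~1 dichotomy (close to two cliques vs.\ near-perfect fractional $K_3$-tiling in $R$) is itself a nontrivial Ramsey--Tur\'an stability statement, and you acknowledge it is the principal difficulty, but it is left entirely unaddressed. Two further technical concerns with Steps~1 and~4: a small $\alpha(G)$ does not automatically transfer to $\alpha(R)=o(|V(R)|)$ (an $R$-independent set of clusters only says the cross-pair densities are at most $d$, not that their union is independent in $G$); and $\delta(R)>|V(R)|/3$ alone does not yield a near-perfect fractional triangle tiling of $R$ (complete bipartite reduced graphs have none). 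The paper sidesteps both by tiling the reduced graph with stars $K_{1,k-1}$ via the Shokoufandeh--Zhao theorem (Theorem~\ref{H-tiling}, which \emph{is} driven purely by $\delta(R)\ge \ell/k$) and then invoking $\alpha(G)\le\alpha n$ directly inside $G$ to convert each star into triangles (proof of Proposition~\ref{prop:almost factors2}). If you want to keep a stability-based route, you would need to prove the dichotomy lemma and replace the fractional-tiling-in-$R$ step with something that uses the independence of $G$ rather than of $R$; but the absorbing issue in Step~3 must be fixed in any case.
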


Combined with the case $k=3$ of  Theorem~\ref{thm:Nen-Peh} \footnote{Rather a slightly modified version of Theorem~\ref{thm:Nen-Peh} that does not restrict to the case $n\in 3\mathbb{N}$} proven by Balogh, Molla and Sharifzadeh \cite{balog16} this gives that for any $\eta>\tfrac{1}{3}$, a condition of $\alpha(G)=o(n)$ is enough to guarantee the existence of an $\eta$-quasiperfect $K_3$-tiling in a graph of minimum degree $\eta n$. As we will see,  the value $\tfrac{1}{3}$ can not be improved here, in that for any $\eta<\tfrac{1}{3}$, there are $n$-vertex graphs with $\delta(G)\ge \eta n$ and $\alpha(G)=o(n)$ for which the largest $K_3$-tiling leaves linearly many vertices uncovered. In this sense, $\tfrac{n}{3}$ can be thought of as a minimum degree  threshold for quasiperfect factors. To formalise this and strengthen the  analogy with Ramsey--Tur\'an densities for fixed sized graphs, we make the following definition.

\begin{defn} \label{def:RTT}
For a $k$-vertex graph $F$ and an integer $1\leq r\leq k$, define the \emph{Ramsey--Tur\'an tiling threshold}, denoted by $\RTT_r(F)$, as the largest $\eta_0\ge 0$ such that for all $0<\eta<\eta_0$ and $\alpha>0$, there exist an $n$-vertex graph $G$ with $\delta(G)\ge \eta n$ and $\alpha_r(G)\le \alpha n$ such that $G$ does not contain an $\eta$-quasiperfect $F$-tiling, for all sufficiently large $n$.  
\end{defn}

The parameter $\RTT_r(F)$ asks what density one needs to force quasiperfect $F$-tilings in graphs $G$ with $\alpha_r(G)=o(n)$. In this way, establishing the value of this parameter encompasses many previously studied questions. Indeed Problem \ref{p extremal} can be thought of as the study of $\RTT_1(F)$ as $\alpha_1(G)=0$ for every graph $G$ and so the independence condition becomes obsolete when looking at $\RTT_1$. The problem then reduces to finding extremal minimum degree thresholds, again by noting that when $\eta>\tfrac{1}{2}$ and  $n\in k \N$, quasiperfect tilings reduce to perfect tilings.   Moreover, Problems \ref{p1} and \ref{p2} coincide with finding $\RTT_r(F)$ when $\RTT_r(F)\ge \tfrac{1}{2}$. Finally Theorem \ref{thm:almostk3} can be read as saying that $\RTT_2(K_3)=\tfrac{1}{3}$. We propose a systematic study of the parameter $\RTT_r(F)$.

\subsection{Our results} \label{sec:our results}

Our focus here is to explore small values of $\RTT_r(F)$. We begin by addressing the question of determining when $\RTT_r(F)=0$. Our first result shows that for a $k$-vertex graph $F$, imposing that $\alpha_k(G)=o(n)$ forces any dense graph to contain quasiperfect $F$-tilings.

\begin{theorem} \label{thm:K_k alpha_k}
For any integer $k\ge 3$ and constant $0<\eta\le 1$, there exists an $\alpha>0$ such that for all sufficiently large $n\in\N$, any $n$-vertex graph $G$ with  $\delta(G)\ge \eta n$ and $\alpha_k(G)\le \alpha n$ contains a $\eta$-quasiperfect $K_k$-tiling, and hence an $\eta$-quasiperfect $F$-tiling for any $k$-vertex graph $F$.
\end{theorem}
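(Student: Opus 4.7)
The plan is to prove the theorem via the absorption method, with an additional partitioning argument to handle the multi-component structure that small $\eta$ allows. Write $\ell := \lfloor 1/\eta \rfloor$. The argument has three phases: (i) build an absorbing set $A$; (ii) find a $K_k$-tiling of $V(G)\setminus A$ covering all but a small residue $R$; (iii) use $A$ to absorb $R$ up to the permitted $\ell(k-1)$ uncovered vertices.

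\textbf{Phase 1 (Absorber).} I would construct $A\subseteq V(G)$ with $|A|=o(n)$ satisfying the following absorbing property: for any $R\subseteq V(G)\setminus A$ with $|R|\le\gamma n$ (some small $\gamma=\gamma(\eta,k)$), $G[A\cup R]$ admits a $K_k$-tiling leaving at most $\ell(k-1)$ uncovered vertices. The set $A$ is assembled via the standard machinery of vertex-swap absorber gadgets: constant-sized subsets that, chained together, can swallow any small family of leftover vertices. The existence of many disjoint gadgets per vertex follows from $\alpha_k(G)\le\alpha n$ combined with $|N(v)|\ge \eta n\gg\alpha n$: since $N(v)$ contains a $K_k$, it contains a $K_{k-1}$ extending (through $v$) to a $K_k$ containing $v$; the $\alpha_k$-condition also yields companion $K_k$'s, and standard counting plus random selection then delivers $A$.

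\textbf{Phases 2--3 (Almost-tiling and absorption).} With $A$ set aside, I greedily extract $K_k$'s from $V(G)\setminus A$: whenever the uncovered portion exceeds $\alpha n$ vertices, the hypothesis $\alpha_k(G)\le\alpha n$ yields another $K_k$. The procedure terminates with a residue $R\subseteq V(G)\setminus A$ of size $|R|\le\alpha n\le\gamma n$, and the absorber swallows $R$ to leave at most $\ell(k-1)$ vertices uncovered. The conclusion for an arbitrary $k$-vertex graph $F$ is immediate since $F\subseteq K_k$.

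\textbf{Main obstacle.} The subtle step is Phase 1 in the small-$\eta$ regime (so $\ell\ge 2$), where $G$ need not be ``$K_k$-connected'': the lower-bound construction of $\ell$ disjoint near-cliques of size $\approx n/\ell$ shows that an absorber gadget for $v$ must live within $v$'s own cluster. I would address this by partitioning $V(G)$ into at most $\ell$ parts $V_1,\ldots,V_t$ --- for instance the equivalence classes under the closure of the relation ``sharing a common $K_k$'' --- and building absorbers part-wise. The minimum-degree hypothesis forces $|V_i|\ge \eta n$, so $t\le\ell$; the inherited $\alpha_k$-condition provides enough internal structure to construct absorbers inside each $V_i$. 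Each part then contributes a leftover of at most $k-1$ vertices by divisibility mod $k$, and hence at most $t(k-1)\le\ell(k-1)$ overall, matching the tight bound. A delicate sub-case is when some $|V_i|$ is much larger than $\eta n$ so that the relative minimum degree $\delta(G[V_i])/|V_i|$ drops below $1/2$; here one may need to recurse within $V_i$ with parameter $\eta_i:=\eta n/|V_i|$, and aggregate using $\sum_i\lfloor 1/\eta_i\rfloor\le\lfloor 1/\eta\rfloor=\ell$. Identifying the correct partition and verifying enough absorber-structure in each part constitutes the core combinatorial content of the proof.
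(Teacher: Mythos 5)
Your high-level plan---build an absorbing structure, partition $V(G)$ into at most $\lfloor 1/\eta\rfloor$ parts to handle the multi-component obstruction, absorb part by part accepting $k-1$ leftovers per part---is the right skeleton, and matches the paper's framework (Propositions~\ref{prop:Absorbing sets} and~\ref{prop:almost factors}). However, the execution has two genuine gaps. First, the absorber-existence claim (``standard counting plus random selection then delivers $A$'') would fail: with $\delta(G)\ge\eta n$ for small $\eta$, one cannot guarantee $\Omega(n^a)$ absorbers of size $a$ per $k$-set, which the classical random-selection method of R\"{o}dl--Ruci\'{n}ski--Szemer\'{e}di requires. The paper instead uses Montgomery's robust bipartite-template construction (Lemma~\ref{bip-temp} and its partition-wise generalisation Lemma~\ref{bip-temp+}), which gets by with only $\Omega(n)$ \emph{vertex-disjoint} absorbers per $k$-set; this substitution is essential, not a refinement, and without it Phase~1 does not go through.

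Second, the partition you propose---equivalence classes under the closure of ``sharing a common $K_k$''---is underspecified and would not deliver the properties the argument actually uses. The paper works with $(F,m,t)$-reachability (connectors found while avoiding any adversarial forbidden set $W$ of size $m$, with a bounded budget $t$), proves a partition lemma via a cascade of nested constants, and then runs a merging process via transferrals (vectors $\mathbf{u}_i-\mathbf{u}_j$ expressible as differences of $(F,\beta)$-robust $k$-vectors) to ensure both that each part is $(F,\beta n,t)$-closed \emph{and} that there are few edges between parts. The ``few edges'' condition is precisely what makes $k\mathbf{u}_i$ robust, which Lemma~\ref{close} needs in order to produce linearly many vertex-disjoint absorbers for $k$-sets inside a part. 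A naive closure of ``sharing a $K_k$'' yields neither the robustness of the partition under deletions nor the sparsity between parts. Finally, the ``delicate sub-case'' you flag (recursing when a part is much larger than $\eta n$) never arises in the paper's setup: connectors and absorbers for a $k$-set in $V_i$ are explicitly allowed to use vertices from other parts (see the remark after the definition of closedness), so the relative minimum degree within a single part is never invoked and no recursion is needed.
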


\Cref{thm:K_k alpha_k} shows that $\RTT_k(F)=0$ for all $k$-vertex graphs $F$. Previously the first and third author together with Treglown (unpublished) used their methods from \cite{han2019tilings} to prove a weaker version of Theorem \ref{thm:K_k alpha_k} (as well as \Cref{thm:K_k alpha_k-1} below),  that did not determine the exact constant number of vertices uncovered by the $K_k$-tiling.
The ideas from \cite{han2019tilings} will also be central to our proofs here.


It is natural to ask whether the condition of $\alpha_k(G)=o(n)$ can be weakened to $\alpha_r(G)=o(n)$ for some $r<k$. It turns out that this is not the case in general, as detailed by the next proposition which follows easily from a construction given by Balogh, Treglown and Wagner \cite[Section 2.1]{BTW_tilings}. We will give the details of the proof here for completeness.

\begin{prop} \label{prop:lower K_k alpha_k-1} For any integer $k\geq 3$, $0<\eta<\tfrac{1}{k}$ and $\alpha>0$ the following holds for all sufficiently large $n\in \N$. 
There exists an $n$-vertex graph $G$ with $\delta(G)\ge \eta n$ and $\alpha_{k-1}(G)\le \alpha n$ such that every $K_k$-tiling in $G$ covers at most $\eta k n$ vertices.
\end{prop}

Proposition \ref{prop:lower K_k alpha_k-1} shows that $\RTT_{k-1}(K_k)\ge \tfrac{1}{k}$ in a strong way. Indeed the constructions have the property that any $K_k$-tiling misses a \emph{linear} number of vertices. Our next result shows that $\RTT_{k-1}(F)\le \tfrac{1}{k}$ for all $k$-vertex graphs $F$, in particular providing a matching upper bound to Proposition \ref{prop:lower K_k alpha_k-1} for $F=K_k$.

\begin{theorem} \label{thm:K_k alpha_k-1}
For any integer $k\ge 3$ and constant $\tfrac{1}{k}<\eta\le 1$, there exists an $\alpha>0$ such that for all sufficiently large $n\in \N$, any $n$-vertex graph $G$ with  $\delta(G)\ge \eta n$ and $\alpha_{k-1}(G)\le \alpha n$ contains an $\eta$-quasiperfect $K_k$-tiling, and hence an $\eta$-quasiperfect $F$-tiling for any $k$-vertex graph $F$.
\end{theorem}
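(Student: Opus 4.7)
The plan is to combine the absorption method with a structural/decomposition argument exploiting that $\ell := \floor{1/\eta} \le k-1$ when $\eta > 1/k$.

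First, I would uncover a "cluster" structure of $G$. The minimum degree condition gives that any family of disjoint dense components has at most $\ell$ members, each of size at least $\eta n$. More precisely, I would show that $V(G)$ admits a partition $V(G) = U_1 \sqcup \dots \sqcup U_m$ with $m \le \ell$, $|U_i| \ge \eta n - o(n)$, and $e(U_i, U_j) = o(n^2)$ for $i \ne j$; moreover each $G[U_i]$ inherits $\delta(G[U_i]) \ge \eta n - o(n)$ and $\alpha_{k-1}(G[U_i]) = o(n)$. The key input here is the Ramsey--Tur\'an-type condition: if a cross-cut $(S, T)$ with $|S|, |T|$ linear had linear edge density, then inside $S \cup T$ one would find copies of $K_{k-1}$ that use vertices from both sides (via $\alpha_{k-1} = o(n)$), and the minimum degree would force these to extend to $K_k$'s spanning $S$ and $T$, allowing the two sides to be merged into a single cluster. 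Iterating collapses the structure into at most $\ell$ clusters, since each cluster eats at least $\eta n$ vertices.

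Next, I would carry out the absorption-plus-almost-tiling argument inside each cluster $U_i$. For each cluster, I would build an absorbing set $A_i \subseteq U_i$ of size $o(|U_i|)$ such that for every $S \subseteq U_i \setminus A_i$ with $|S| < k$ and $|A_i| + |S| \in k\N$, the subgraph $G[A_i \cup S]$ has a $K_k$-factor. Such an $A_i$ is assembled from many $(2k-1)$-vertex absorbing gadgets for each vertex $v \in U_i$, obtained by first using $\alpha_{k-1}(G[U_i]) = o(n)$ to locate many $K_{k-1}$'s in $N(v) \cap U_i$ (which is linear), and then using the same condition plus the high minimum degree \emph{inside the cluster} to extend these $K_{k-1}$'s to full absorbing gadgets. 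This is precisely where $\ell \le k-1$ matters: within a single cluster, the effective minimum degree ratio $\de(G[U_i])/|U_i|$ is close to $1$, which is more than enough to run the standard absorber construction.

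Then, after removing $A := \bigcup_i A_i$, I would apply an almost-tiling lemma (e.g.\ via the regularity lemma applied cluster by cluster) to find a $K_k$-tiling of $G - A$ covering all but $o(n)$ vertices of each $U_i$; the absorbing property then lets $A_i$ absorb the leftover, so that the final number of uncovered vertices in $U_i$ is at most the residue of $|U_i|$ modulo $k$, hence at most $k-1$. Summing gives at most $m(k-1) \le \ell(k-1)$ uncovered vertices in total, which is exactly the $\eta$-quasiperfect bound.

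The main obstacle will be establishing the cluster decomposition (Step 1) and showing that the absorbers and the almost-tiling can be run \emph{within} each cluster. Under $\alpha_{k-1}(G) = o(n)$ alone (as opposed to $\alpha_k(G) = o(n)$, which is the setting of Theorem~\ref{thm:K_k alpha_k}), one cannot freely build $K_k$'s from arbitrary linear sets; one needs the minimum degree to extend $K_{k-1}$'s to $K_k$'s, and this is only possible if the $K_{k-1}$ lies in a sufficiently dense local environment. Isolating such environments as clusters, and proving that cross-cluster cross-copies of $K_k$ are sparse enough to be safely ignored, is the crux of the argument.
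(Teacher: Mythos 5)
Your high-level plan---split $V(G)$ into at most $\ell$ large blocks with sparse cuts, absorb inside each block, and account for the $\le k-1$ residual vertices per block---has the same shape as the paper's argument, but two of the load-bearing claims in your sketch fail, and these are exactly the places where the paper introduces its heavier machinery (reachability, closed partitions, transferrals in the lattice of robust index vectors, and Montgomery's bipartite template).

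The merging step does not work as you describe. You claim that if two linear sets $S,T$ carry a linear number of cross edges, then $\alpha_{k-1}=o(n)$ gives $K_{k-1}$'s meeting both sides and ``the minimum degree would force these to extend to $K_k$'s spanning $S$ and $T$.'' With $\delta(G)\ge \eta n$ and $\eta$ just above $1/k$, the common neighbourhood of a fixed $(k-1)$-set is only guaranteed to have size $\ge (k-1)\eta n-(k-2)n$, which is negative whenever $\eta<1-\tfrac{1}{k-1}$; already for $k=3$ and $\eta$ near $1/3$ the common neighbourhood of an edge may be empty. So minimum degree alone does not extend a $K_{k-1}$ to a $K_k$, let alone a spanning one. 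The paper therefore does not merge by producing a spanning $K_k$: it merges two $(F,\beta n,t)$-closed parts whenever one can realise two $K_k$'s whose index vectors differ by $\mathbf{u}_i-\mathbf{u}_j$ (Lemma~\ref{transferral}), and the delicate subcase with many cross edges but essentially no internal $K_k$'s is handled by deducing from $k\mathbf{u}_i\notin I^{2\zeta}(\mathcal{P})$ that internal degrees are tiny, hence cross-degrees are huge, which produces a copy of $K_k$ with index vector $(k-2,2,0,\dots,0)$.

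Your absorber step also rests on a false premise: ``within a single cluster, the effective minimum degree ratio $\delta(G[U_i])/|U_i|$ is close to $1$.'' If the decomposition has a single block ($U_1=V(G)$), that ratio equals $\eta$ and can be arbitrarily close to $\tfrac1k$; more generally the block sizes need not shrink to $\approx\delta(G)$. Even setting that aside, the paper explicitly points out (citing \cite{balog16}) that under these degree hypotheses one cannot guarantee the polynomial abundance of absorbers per $k$-set that ``the standard absorber construction'' requires, and replaces it by linear abundance of \emph{vertex-disjoint} $(F,t)$-absorbers together with Montgomery's robust bipartite template (Lemmas~\ref{robust} and~\ref{bip-temp+}), with the absorbers themselves produced via $(F,m,t)$-reachability and connectors rather than by direct neighbourhood counting. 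Without these substitutes, the proposal does not close.
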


Note that \Cref{thm:K_k alpha_k-1} implies both \Cref{thm:Nen-Peh} and \Cref{thm:almostk3} and bridges the gap between these two theorems whilst generalising \Cref{thm:almostk3} to cliques of all sizes. \Cref{thm:K_k alpha_k-1} is tight for $K_k$ considering Proposition \ref{prop:lower K_k alpha_k-1}.

\subsubsection{Cycles and Trees}

Given that the tightness of \Cref{thm:K_k alpha_k-1} is only witnessed by $F=K_k$, we now explore the behaviour of other graphs $F$. In particular, we can ask whether there are certain graphs $F$ for which the $\alpha_k(G)=o(n)$ condition in \Cref{thm:K_k alpha_k} \emph{can} be weakened to considering independence numbers for smaller cliques. Concentrating on the weakest possible condition of $\alpha(G)=\alpha_2(G)=o(n)$, we now discuss whether such a condition can force quasiperfect $F$-tilings for all dense graphs. Our next two results establish that this is \emph{only} the case if $F$ is  a tree.

\begin{theorem} \label{thm:tree}
For any integer $k\ge 3$ and constant $0<\eta\le 1$, there exists an $\alpha>0$ such that for all sufficiently large $n\in \N$, any $n$-vertex graph $G$ with  $\delta(G)\ge \eta n$ and $\alpha(G)\le \alpha n$ contains a $\eta$-quasiperfect $T_k$-tiling for any $k$-vertex tree $T_k$.
\end{theorem}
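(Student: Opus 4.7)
The plan is to reduce the problem to tiling the connected components of $G$ individually. Since $\delta(G)\ge\eta n$, every connected component has at least $\eta n+1$ vertices, so $G$ has at most $\ell:=\floor{1/\eta}$ connected components $C_1,\ldots,C_s$ with $s\le\ell$. If within each $C_i$ we can find a $T_k$-tiling leaving at most $|V(C_i)|\bmod k\le k-1$ vertices uncovered, then summing over the components yields at most $s(k-1)\le\ell(k-1)$ uncovered vertices in total, which is precisely the $\eta$-quasiperfect bound. The core sub-problem thus reduces to the following: given a connected graph $H$ on $m$ vertices with $\delta(H)\ge\eta m$ and $\alpha(H)=o(m)$, find a $T_k$-tiling of $H$ covering all but at most $m\bmod k$ vertices.

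To solve this sub-problem I would follow the absorbing framework of R\"odl, Ruci\'nski and Szemer\'edi. First, construct an absorbing set $A\subseteq V(H)$ with $|A|=o(m)$ and $k\mid|A|$, such that $H[A\cup W]$ admits a $T_k$-factor for every $W\subseteq V(H)\setminus A$ with $|W|=o(m)$ and $k\mid|W|$. Second, find an almost-perfect $T_k$-tiling of $H\setminus A$ leaving a leftover set $W_0$ of size $o(m)$; this step can be executed via a nibble-style (or iterated greedy) extraction of disjoint copies of $T_k$, using that every linear-sized subset of $V(H)\setminus A$ still contains many copies of $T_k$. Third, set aside $m\bmod k$ vertices from $W_0$ to remain uncovered in the final tiling; the remaining set $W\subseteq W_0$ then satisfies $k\mid|W|$ and $|W|=o(m)$, so the absorbing property of $A$ yields a $T_k$-factor of $H[A\cup W]$, which combined with the almost-tiling of $H\setminus A$ and the $m\bmod k$ discarded vertices completes the construction.

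The reason the weak hypothesis $\alpha(G)=o(n)$ suffices for trees, in contrast to the stronger $\alpha_{k-1}(G)=o(n)$ needed for $K_k$-tilings in Theorem \ref{thm:K_k alpha_k-1}, is the classical fact that every graph of minimum degree at least $k-1$ contains every $k$-vertex tree as a subgraph. In our setting, removing any $o(m)$-sized vertex set from $H$ leaves a subgraph with minimum degree $\ge\eta m-o(m)\ge k-1$, so copies of $T_k$ can be found essentially anywhere. This local abundance of $T_k$'s drives both the almost-tiling step and the absorber construction, and also rules out any direct analogue of the tight $K_k$-construction in Proposition \ref{prop:lower K_k alpha_k-1}.

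The main obstacle will nevertheless be constructing the absorbing set $A$: for every vertex $v\in V(H)$, one needs many local $T_k$-absorbing gadgets, i.e., small vertex sets $X_v$ disjoint from $\{v\}$ such that both $H[X_v]$ and $H[X_v\cup\{v\}\setminus\{u\}]$ admit $T_k$-factors for some $u\in X_v$. For trees such a gadget can be built of size $O(k)$ by taking two almost-disjoint copies of $T_k$ meeting at a "swap vertex" adjacent to $v$ (so that $v$ can be swapped in for a leaf); the hypotheses $\alpha(H)=o(m)$ and $\delta(H)\ge\eta m$ guarantee enough such gadgets in a neighbourhood of every vertex, and a standard probabilistic selection, analogous to the one used in the proof of Theorem \ref{thm:K_k alpha_k}, then assembles the gadgets into a single absorbing set $A$ of the required size.
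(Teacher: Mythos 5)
Your reduction to connected components is where the argument breaks, and the gap is fatal. You claim that in a connected graph $H$ on $m$ vertices with $\delta(H)\ge\eta m$ and $\alpha(H)=o(m)$, there is always a $T_k$-tiling leaving at most $m\bmod k$ vertices uncovered. This is false. Take $\eta=0.4$, $k=7$ and $T_k=K_{1,6}$ (the star). Let $V_1,V_2$ be disjoint sets with $|V_1|\equiv 3$ and $|V_2|\equiv 4\pmod 7$, each of size roughly $n/2$, and put a dense pseudorandom graph (e.g.\ $G(n/2,0.9)$) inside each so that $\alpha(G[V_i])=O(\log n)$ and $\delta(G[V_i])\ge 0.4n$. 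Join the two sides by exactly two vertex-disjoint edges. The resulting $G$ is connected, satisfies $\delta(G)\ge\eta n$, $\alpha(G)\le\alpha(G[V_1])+\alpha(G[V_2])=o(n)$, and $n\equiv 0\pmod 7$. However, any copy of $K_{1,6}$ with the centre on one side and $j$ leaves on the other uses $j$ crossing edges, so at most two crossing copies exist and each contributes $\pm 1\pmod 7$ to the net "flow" $s$ of vertices moved across the cut; hence $s\in\{0,1,2,5,6\}\pmod 7$. Making both sides divisible by $7$ requires $s\equiv 3$, which is unreachable, and a short computation shows every achievable $s$ leaves exactly $7$ vertices uncovered. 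So a connected graph satisfying your hypotheses can be forced to leave $k$ vertices rather than $m\bmod k\le k-1$.

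The issue is that the minimum degree condition forces large components, but it does not prevent a connected graph from having a sparse cut that blocks the divisibility-fixing crossings your scheme requires. This is exactly why the paper does \emph{not} partition by connectivity. Instead Lemma~\ref{lem:detectpartition} constructs a partition $\{V_1,\ldots,V_C\}$ into \emph{reachability-closed} classes (two vertices are in the same class only when they are $(F,\beta n,t)$-reachable), with $e(V_i,V_j)$ small between classes and $|V_i|>\left(\eta+\tfrac{\mu}{2}\right)n$, hence $C\le\floor{1/\eta}$. In the counterexample above this partition is (essentially) $\{V_1,V_2\}$ even though $G$ is connected. Proposition~\ref{prop:Absorbing sets} then attaches a separate absorbing set $A_i$ to each $V_i$ via the Montgomery template (Lemma~\ref{bip-temp+}), and the final tiling leaves at most $k-1$ vertices \emph{per class}, giving the $\floor{1/\eta}(k-1)$ bound. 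Your description of the absorbers themselves (swap gadgets built from overlapping copies of $T_k$, using that $\alpha(G)=o(n)$ forces many copies of $T_k$ near every vertex) is reasonable and aligns with the fan/absorber machinery in Sections~\ref{sec:Abs sets}--\ref{sec:lattice}; the missing idea is replacing connectivity by the reachability lattice to control how the $m\bmod k$ leftovers distribute across the graph.
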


For any graph $F$ with a cycle, denote by $\gamma(F)$ the minimum size of a subset $S\< V(F)$ such that $F-S$ is acyclic. Note that $\gamma(K_k)=k-2$ and $\gamma(C_k)=1$ for all $k\geq 3$. 

\begin{prop} \label{prop:lower alpha_2}
For any integer $k\geq 3$, $k$-vertex $F$, $0<\eta<\tfrac{\gamma(F)}{k}$ and $\alpha>0$ the following holds for all sufficiently large $n\in \N$, defining $\mu:=\tfrac{\gamma(F)}{k}-\eta>0$. There exists an $n$-vertex graph $G$ with $\delta(G)\ge \eta n$ and $\alpha(G)\le \alpha n$ such that every $F$-tiling in $G$ covers at most $(1-\mu)n$ vertices.
\end{prop}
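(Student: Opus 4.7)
The plan is to construct $G$ as a graph join of a small clique with a high-girth, low-independence-number graph. First, I would designate $A\subseteq V(G)$ of size $|A|=\lceil \eta n\rceil$ as a clique, and let $H$ be a graph on the remaining $|B|=n-|A|$ vertices with girth strictly greater than $k$ and $\alpha(H)\le \alpha n$. Such $H$ exists on $|B|$ vertices for all sufficiently large $n$ by the classical theorem of Erd\H{o}s giving graphs of arbitrary girth and chromatic number at least $\lceil 1/\alpha\rceil$, which in turn bounds the independence number by $|B|/\chi(H)\le \alpha n$. Form $G$ by placing $H$ on $B$, adding all edges inside $A$, and adding all edges between $A$ and $B$.

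Both the degree and independence conditions are then immediate: every vertex of $A$ has degree $n-1$, every vertex of $B$ has at least $|A|\ge \eta n$ neighbours in $A$, and since $A$ is a clique joined completely to $B$, any independent set of $G$ either lies in $A$ (so has size at most $1$) or lies in $B$ (so has size at most $\alpha(H)\le \alpha n$). The crux of the argument will then be the structural claim that every copy of $F$ in $G$ uses at least $\gamma(F)$ vertices from $A$. To see this, I would consider any embedding $\phi\colon F\hookrightarrow G$ and set $S':=\phi^{-1}(A)$. If $F-S'$ contained a cycle $C$, then $\phi(C)$ would be a cycle of length $|C|\le k$ whose vertices all lie in $B$; but the only edges of $G$ between $B$-vertices come from $H$, so this would produce a cycle of length at most $k$ in $H$, contradicting the girth condition. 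Hence $F-S'$ is acyclic and by the definition of $\gamma(F)$, $|S'|\ge \gamma(F)$.

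To finish, any $F$-tiling in $G$ contains at most $|A|/\gamma(F)\le (\eta n+1)/\gamma(F)$ copies, covering at most $k(\eta n+1)/\gamma(F)$ vertices. The hypothesis $\eta<\gamma(F)/k$ rearranges (using $\gamma(F)<k$, which holds for any $k$-vertex $F$ with $k\ge 3$) to $k\eta/\gamma(F)<1-\gamma(F)/k+\eta=1-\mu$ with a strictly positive gap, so for $n$ sufficiently large the tiling covers at most $(1-\mu)n$ vertices, as required. The main (minor) obstacle I anticipate is justifying the existence of $H$ with the prescribed girth and independence number for the specific order $|B|$; I expect to cite this as a black box from the classical probabilistic construction of Erd\H{o}s, after which the remainder of the proof is a short structural observation combined with a routine arithmetic verification.
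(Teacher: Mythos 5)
Your construction and structural argument coincide with the paper's. The paper forms $G' = G_0(\eta)\cup G_1$, where $G_0(\eta)$ is exactly the graph you describe — a clique of size $\eta n$ completely joined to an independent set — and $G_1$ is a high-girth graph with small independence number (in fact small bipartite holes, since the paper proves this proposition jointly with Theorem~\ref{thm:stareverything}\,$(2)$). Since $G_0$ already contains every possible edge touching $X_1$, the only edges $G_1$ can add are inside $X_2$, so the paper's $G'$ and your join of a clique $A$ with a high-girth $H$ on $B$ are the same construction. The observation that a feedback vertex set of $F$ must map into $A$ — forcing at least $\gamma(F)$ vertices of every $F$-copy into $A$ — and the resulting count are likewise the paper's argument, and your arithmetic verification that $k\eta/\gamma(F) < 1-\mu$ is correct.

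There is, however, one genuine logical error in the way you justify the existence of $H$. You invoke Erd\H{o}s's theorem to get $\chi(H)\ge\lceil 1/\alpha\rceil$ and then assert this ``bounds the independence number by $|B|/\chi(H)\le\alpha n$''. This reverses the standard inequality: for every graph, $\chi(H)\ge |V(H)|/\alpha(H)$, equivalently $\alpha(H)\ge |V(H)|/\chi(H)$, which is a \emph{lower} bound on $\alpha(H)$, not an upper bound. Large chromatic number does not force small independence number (a disjoint union of a clique on $\lceil 1/\alpha\rceil$ vertices and a large independent set has huge $\chi$ and huge $\alpha$). The statement you actually need — that for every $\alpha>0$ and $k$ there exist, for all large $m$, $m$-vertex graphs of girth $>k$ and independence number at most $\alpha m$ — is true, but it is what the Erd\H{o}s probabilistic argument proves \emph{directly} (first-moment bounds on independent sets and short cycles in $G(m,p)$, then deleting a vertex from each short cycle), with the chromatic-number lower bound being the corollary, not the premise. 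You should therefore cite the small-independence form of the construction (or simply the paper's Lemma~\ref{G1}, which proves a strengthening in terms of $\alpha^*$). With that one citation corrected, the proof is complete.
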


Proposition \ref{prop:lower alpha_2} proves that\footnote{If $F$ is acyclic, this is immediate.} $\RTT_{2}(F)\geq \tfrac{\gamma(F)}{k}$ for every $k$-vertex $F$. In particular, we have that $\RTT_2(K_k)\geq 1-\tfrac{2}{k}$. This was previously shown by Knierim and Su \cite{knier21} who proved a matching upper bound (restricted to the case $n\in k \N$ and hence concentrating on $K_k$-factors), as previously mentioned. Our next result proves that this lower bound is tight for cycles and can be seen as a generalisation of \Cref{thm:almostk3} to cycles of arbitrary length.

\begin{theorem} \label{thm:cycle}
For any integer $k\ge 3$ and constant $\tfrac{1}{k}<\eta\le 1$, there exists an $\alpha>0$ such that for all sufficiently large $n\in \N$, any $n$-vertex graph $G$ with  $\delta(G)\ge \eta n$ and $\alpha(G)\le \alpha n$ contains an $\eta$-quasiperfect $C_k$-tiling, where $C_k$ denotes the cycle with $k$ vertices.
\end{theorem}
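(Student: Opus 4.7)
I would adapt the absorbing-method framework used in the proof of \Cref{thm:K_k alpha_k-1} to the cycle setting, exploiting the greater flexibility of $C_k$ compared to $K_k$ (which compensates for the weaker hypothesis $\alpha(G) \leq \alpha n$ instead of $\alpha_{k-1}(G) \leq \alpha n$). The proof proceeds in three stages: (i) construct a small absorbing set $A$; (ii) find an almost-perfect $C_k$-tiling in $G \setminus A$; (iii) absorb the leftover into $A$.

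\textbf{Absorbing lemma.} The goal is a set $A \subseteq V(G)$ with $|A| = o(n)$ admitting a $C_k$-tiling, such that for every $W \subseteq V(G) \setminus A$ with $|W| \leq \beta n$ (for some small $\beta > 0$), the set $A \cup W$ admits a $C_k$-tiling covering all but at most $\ell(k-1)$ vertices, where $\ell = \lfloor 1/\eta \rfloor$. Following the R\"odl--Ruci\'nski--Szemer\'edi paradigm, it suffices to show that for every vertex $v$ there are $\Omega(n^t)$ \emph{absorbers} for $v$: sets $X$ of $t = O_k(1)$ vertices such that both $X$ and $X \cup \{v\}$ admit $C_k$-tilings of appropriate sizes. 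The key observation is that an absorber for $C_k$ boils down to finding a $(k-1)$-path whose two endpoints are both adjacent to $v$ (giving a $C_k$ through $v$) together with enough supporting structure so that the absorber can be tiled both with and without $v$. Since $\delta(G) \geq \eta n > n/k$ and $\alpha(G) \leq \alpha n$, the neighborhood $N(v)$ is internally dense and contains many such paths by standard Ramsey--Tur\'an-type reasoning; a standard probabilistic selection and concentration argument then produces the required global absorbing set $A$.

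\textbf{Almost-perfect $C_k$-tiling via stability.} For the second step, I would employ a stability dichotomy. In the non-extremal case, where $G$ is far from a disjoint union of $\ell$ dense blobs, I apply Szemer\'edi's regularity lemma to $G \setminus A$ to produce a reduced graph $R$ with $\delta(R) \geq (\eta - o(1))|R|$ and no large independent set; an LP duality argument yields a fractional $C_k$-tiling in $R$ of total weight close to $|R|$, which through the blow-up lemma gives a $C_k$-tiling of $G \setminus A$ leaving only $o(n)$ vertices uncovered. In the extremal case, $G$ is close to a disjoint union of $\ell$ cliques $V_1, \dots, V_\ell$ with $|V_i| \approx n/\ell$, and inside each $V_i$ we obtain a $C_k$-tiling leaving at most $k-1$ vertices (using Hajnal--Szemer\'edi in $V_i$, since $V_i$ is internally very dense). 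Finally, applying the absorbing property of $A$ to the leftover $W$ from Step (ii) produces a $C_k$-tiling of $G$ covering all but $\ell(k-1)$ vertices.

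\textbf{Main obstacle.} The principal difficulty is the near-extremal case of Step (ii): because there are very few edges between the blobs $V_i$, the absorbing set $A$ must be arranged so that each $V_i$ contains enough internal absorbing capacity for its own leftover. Designing the absorbing lemma so that this blob-wise distribution is guaranteed, while relying only on the weak condition $\alpha(G) = o(n)$ (as opposed to $\alpha_{k-1}(G) = o(n)$), is the main technical hurdle.
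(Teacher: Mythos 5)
Your high-level plan (absorb, almost-tile, absorb the leftover) matches the paper's framework, but the two load-bearing steps are exactly where the proposal breaks down. First, you propose building the absorbing set via the classical R\"odl--Ruci\'nski--Szemer\'edi scheme, which requires $\Omega(n^t)$ absorbers of size $t$ for each $k$-set (or vertex). As the paper explicitly points out, the degree hypothesis $\delta(G)\ge\eta n$ with $\eta$ close to $\tfrac1k$ is too weak to provide this: in the near-extremal example where $G$ is close to a disjoint union of $\lfloor 1/\eta\rfloor$ cliques, a $k$-set $S$ straddling two cliques has \emph{no} $(C_k,t)$-absorbers at all, and even $k$-sets inside one clique need not admit polynomially many. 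The paper instead invokes Montgomery's bipartite-template absorption (Lemma~\ref{bip-temp}), which only needs linearly many \emph{vertex-disjoint} absorbers per $k$-set, and crucially a refinement (Lemma~\ref{bip-temp+}) that produces a \emph{family} of disjoint absorbing sets $A_1,\dots,A_C$, one per part of a reachability partition. This is precisely the mechanism that solves what you flag as the ``main obstacle'' (blob-wise absorbing capacity); you correctly identify the problem but leave it unresolved. The partition itself comes from the lattice-based absorbing method (Lemmas~\ref{partition}, \ref{reachable}, \ref{transferral}, \ref{v1v2}, \ref{lem:detectpartition}): one first builds an $(F,\beta n,t)$-closed partition into $\le\lfloor 1/\eta\rfloor$ parts and then merges parts via transferrals until few edges remain between parts. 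None of this appears in your sketch.

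Second, your almost-perfect-tiling step is a genuinely different route: a stability dichotomy with fractional tilings/LP duality and the blow-up lemma in the non-extremal case, and Hajnal--Szemer\'edi inside blobs in the extremal case. The paper's Proposition~\ref{prop:almost factors2} avoids any dichotomy: it applies the regularity lemma, observes $\delta(R_d)\ge\ell/k$, invokes the Shokoufandeh--Zhao theorem (solving Koml\'os's conjecture) with $H=K_{1,k-1}$ to get a near-perfect \emph{star} tiling of the reduced graph, and then inside each star of clusters greedily builds $C_k$'s using $\alpha(G)\le\alpha n$ together with Corollary~\ref{sumner}. Your variant has a concrete gap: the reduced graph $R$ need not inherit $\alpha(R)=o(|R|)$ from $\alpha(G)=o(n)$ (non-adjacent clusters in $R$ can still carry many $G$-edges, just of density $<d$), so the premise on which your fractional-tiling argument rests is unjustified; and in the extremal branch, applying Hajnal--Szemer\'edi inside a near-clique $V_i$ only gives a $C_k$-factor if $k\mid |V_i|$, which you cannot assume and is exactly why per-part absorbers are needed.
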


We summarise all our results on Ramsey--Tur\'an tiling thresholds in the following theorem.

\begin{thmi}[$\RTT$ bounds] \label{thm:RTTeverything}
The following statements hold for all $k\in\N$ with $k\ge 3$:
\begin{enumerate}[label=$(\roman*)$]
    \item  \label{RTTi} $\RTT_k(F)=0$ for any $k$-vertex $F$;
    \item \label{RTTii} $\RTT_{k-1}(K_k)\geq \tfrac{1}{k}$;
    \item \label{RTTiii} $\RTT_{k-1}(F)\le \tfrac{1}{k}$ for all $k$-vertex graphs $F$;
    \item \label{RTTiv} $\RTT_2(T)=0$ for any tree $T$;
    \item \label{RTTv} $\RTT_{2}(F)\geq \tfrac{\gamma(F)}{k}$  for any $k$-vertex $F$ where $\gamma(F)$ is defined as the minimum size of a subset $S\subset V(F)$ such that $F- S$ is acyclic;
    \item \label{RTTvi} $\RTT_{2}(C_k)\leq \tfrac{1}{k}$ for any $k$-vertex cycle $C_k$.
\end{enumerate}
\end{thmi}

Note that Theorem \ref{thm:RTTeverything} \ref{RTTii} and \ref{RTTiii} imply that $\RTT_{k-1}(K_k)=\tfrac{1}{k}$ and, similarly, parts \ref{RTTv} and \ref{RTTvi} determine that $\RTT_2(C_k)=\tfrac{1}{k}$. For ease of reference, from now on we will work solely with the statements in Theorem \ref{thm:RTTeverything} but for more detailed statements, we note here that \ref{RTTi} is Theorem \ref{thm:K_k alpha_k}, \ref{RTTii} is Proposition \ref{prop:lower K_k alpha_k-1}, \ref{RTTiii} is Theorem \ref{thm:K_k alpha_k-1}, \ref{RTTiv} is Theorem \ref{thm:tree}, \ref{RTTv} is Proposition \ref{prop:lower alpha_2} and  \ref{RTTvi} is Theorem \ref{thm:cycle}.

\subsection{Removing divisibility barriers by forbidding large holes}

Keevash and Mycroft \cite{keevash15} developed a geometric theory for tilings showing that obstructions for perfect tilings come in the form of either \emph{divisibility} or \emph{space} `barriers'  (see their work for precise definitions of these concepts). Our results above essentially show that by imposing appropriate minimum degree and  independence  conditions on the host graph, one can remove the space barriers and force an $F$-tiling covering all but a constant number of vertices. As we outlined in Section \ref{sec:quasi}, when the minimum degree is less than $\frac{n}{2}$, imposing bounds on  independence numbers can not remove divisibility barriers and so the largest tilings we can guarantee are quasiperfect.

Various authors have suggested imposing slightly stronger conditions in order to overcome the divisibility barriers and prove the existence of perfect tilings.  Balogh, McDowell, Molla and Mycroft~\cite{balog18} propose looking at graphs which are $K_r$-free for some $r>k$ whilst Nenadov and Pehova \cite{nenadov20} suggest to strengthen the independence condition by forbidding large partite `holes'.

\begin{defn} \label{def:holes}
For an integer $r\ge 2$, an $r$-partite hole of size $s$ in a graph $G$ is a collection of $r$ disjoint vertex subsets $U_1,\ldots,U_r\subset V(G)$ of size $s$ such that there is no copy of $K_r$ in $G$ with one vertex in each of the $U_i$, $i\in[r]$.
For a graph $G$, let $\alpha^*_r(G)$ denote the size of the largest $r$-partite hole in $G$. When $r=2$, we also refer to bipartite holes and we sometimes drop the subscript, letting $\alpha^*(G):=\alpha_2^*(G)$.
\end{defn}

The notion of bipartite holes was first introduced by McDiarmid and Yolov \cite{mcd17} whilst studying Hamiltonian cycles.
Note that $\floor{\tfrac{\alpha_r(G)}{r}}\le  \alpha_r^*(G)$ for all graphs $G$ and so imposing that $\alpha_r^*(G)=o(n)$ is a stronger assumption. Moreover, for any $r\ge 2$, forbidding large $r$-partite holes precludes the existence of the disjoint cliques constructions discussed in \Cref{sec:quasi}. Therefore one can expect to obtain perfect tilings with such a condition and our results will show that this is indeed the case. We define the following variation of Definition~\ref{def:RTT}.

\begin{defn} \label{def:RTT*}
For a $k$-vertex graph $F$ and an integer $1\leq r\leq k$, let $\RTT^*_r(F)$ denote the largest $\eta^*_0\ge 0$ such that for all $0<\eta<\eta^*_0$ and $\alpha>0$, there exists  $n$-vertex graphs $G$ with $\delta(G)\ge \eta n$ and $\alpha^*_r(G)\le \alpha n$ such that $G$ does not contain a perfect $F$-tiling, for all sufficiently large $n\in k \N$.  
\end{defn}

All the results on $\RTT_r(F)$ in Theorem~\ref{thm:RTTeverything}, except \ref{RTTiii} and \ref{RTTvi} when $k=3$, transfer to the setting of $\RTT^*_r(F)$. That is, in Theorem~\ref{thm:RTTeverything} \ref{RTTv} we can replace the condition $\alpha_{2}(G)\le \alpha n$ with $\alpha^*_2(G)\le \alpha n$ and in Theorem~\ref{thm:RTTeverything} \ref{RTTi}, \ref{RTTiv} and \ref{RTTvi} (excluding $k=3$), replacing the condition  $\alpha_r(G)\le \alpha n$ with $\alpha^*_r(G)\le \alpha n$ strengthens the conclusion by giving an $F$-tiling covering all but $k-1$ vertices and hence an $F$-factor when $n\in k \N$. The variation of  Theorem~\ref{thm:RTTeverything} \ref{RTTi} showing that $\alpha_k^*(G)=o(n)$ guarantees that any dense graph with $n\in k\N$ vertices has an $K_k$-factor (i.e. that $\RTT_k^*(K_k)=0$), was shown previously by Nenadov and Pehova \cite{nenadov20}. All the further generalisations of Theorem \ref{thm:RTTeverything}   to the setting of  $\RTT^*_r(F)$ discussed above will be proven here and are collected below in Theorem \ref{thm:stareverything}.

One might expect from these results that the two parameters $\RTT$ and $\RTT^*$ are equal for all inputs. Interestingly though, this is not the case.
Indeed, Theorem~\ref{thm:RTTeverything} \ref{RTTiii} does not transfer to the setting of $\RTT^*_r(F)$ and in  particular, the bound on minimum degree condition in  \ref{RTTvi} of Theorem~\ref{thm:RTTeverything} is not true for $k=3$ in the context of $\RTT^*$. In fact, we determine that $\RTT^*_2(K_3)=\frac{1}{2}$. We collect all our results on $\RTT^*$ in Theorem \ref{thm:stareverything}.


\begin{thmi}[$\RTT^*$ bounds] \label{thm:stareverything}
The following statements hold for $k\in \N$ with $k\ge 3$:
\begin{enumerate}[label={(\arabic*)}]
    \item   \label{RTT*1} $\RTT^*_2(T)=0$ for any tree $T$;
    \item \label{RTT*2} $\RTT^*_{2}(F)\geq \tfrac{\gamma(F)}{k}$  for any $k$-vertex $F$ where $\gamma(F)$ is defined as the minimum size of a subset $S\subset V(F)$ such that $F- S$ is acyclic;
    \item \label{RTT*3} $\RTT^*_{2}(C_k)\leq \tfrac{1}{k}$ for any $k$-vertex cycle $C_k$ with $k\ge4$;
    \item \label{RTT*4} $\RTT^*_2(K_3)\geq \frac{1}{2}$.
\end{enumerate}
\end{thmi}

Note that parts \ref{RTT*2} and \ref{RTT*3} determine that $\RTT_2^*(C_k)=\tfrac{1}{k}$ for $k\geq 4$. For $k=3$, we have that part \ref{RTT*4} of Theorem \ref{thm:stareverything} along with the $K_3$ case of  \Cref{thm:Nen-Peh} proven in \cite{balog16}, imply that $\RTT^*_2(K_3)=\tfrac{1}{2}$. We conclude the introduction by discussing applications of our results to randomly perturbed graphs.

\subsection{Randomly perturbed graphs} \label{sec:perturbed}

The study of Ramsey--Tur\'an theory for tilings contributes to the greater aim of understanding the barriers that force the extremal tiling thresholds for dense graphs (see Problem~\ref{p extremal}) to be large. A recent trend in this direction has been to study the effect of  small random perturbations on dense graphs. This was initiated by Bohman, Frieze and Martin \cite{bohman2004adding} who studied Hamilton cycles. For tilings, a series of papers \cite{BTW_tilings,bottcher2020triangles,bottcher2021cycle,han2019tilings}  have explored the relationship on the minimum degree of a graph $G$ and the value of $p$ such that $G\cup G(n,p)$ contains a given tiling with high probability (that is, with probability tending to $1$ as $n$ tends to infinity). Given that the conditions  $\alpha_r(G)=o(n)$, and more pertinently $\alpha_r^*(G)=o(n)$, are typical in sparse graphs of a  certain density, our results have implications for the randomly perturbed model. These corollaries are often best possible as one needs the random graph to provide small independence numbers in order to give the existence of tilings in the perturbed model. Moreover, as noticed by Nenadov and Pehova \cite{nenadov20}, one in fact obtains something stronger. Indeed, usually in the perturbed model, one fixes an arbitrary graph $G$ (which satisfies a dense minimum degree condition) and asks for $p$ such that $G\cup G(n,p)$ contains a given tiling with high probability. Here, we can conclude that with high probability, $G(n,p)$ has the property that no matter how an adversary places a graph $G$ (satisfying a minimum degree condition), the resulting graph will have a given tiling.  Below, we collect these corollaries, both of which are tight with respect to the minimum degree and probability conditions.

\begin{cor} \label{cor:perturbed}
Let $k\in \N$ and $n\in k\N$ with $k\ge4$. With high probability $G(n,p)$ has the property that for any graph $G$ such that $\delta(G)\ge \eta n$, $G\cup G(n,p)$ contains an $F$-factor in the following cases:
\begin{enumerate}[label={$(\arabic*)$}]
    \item \label{RP:3}  $F=T_k$ for some tree $T_k$, $\eta>0$ and $p\ge Cn^{-1}$ for $C=C(\eta)$;
    \item \label{RP:4}  $F=C_k$, $\eta>\tfrac{1}{k}$ and $p\ge Cn^{-1}$ for $C=C(\eta)$.
\end{enumerate}
\end{cor}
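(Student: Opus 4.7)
The plan is to combine Theorem~\ref{thm:stareverything} with a standard first moment calculation showing that $G(n,p)$ has no large bipartite holes whenever $p\ge C/n$ for sufficiently large $C$.

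First I would establish the following claim: for every $\alpha>0$ there exists $C_0=C_0(\alpha)$ such that if $p\ge C_0/n$, then with high probability $\alpha^*(G(n,p))\le \alpha n$. A bipartite hole of size $s:=\lceil \alpha n\rceil$ is a pair of disjoint $s$-sets $U_1,U_2$ with no edge between them, so the expected number of such holes is at most
\[
\binom{n}{s}^{\!2}(1-p)^{s^2} \;\le\; \left(\tfrac{en}{s}\right)^{\!2s}\!\!e^{-ps^2} \;\le\; \exp\!\big(2\alpha n\log(e/\alpha)-C_0\alpha^2 n\big),
\]
which tends to $0$ as long as $C_0>2\alpha^{-1}\log(e/\alpha)$; the claim follows by Markov's inequality.

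Given $\eta$ as in the corresponding case, I would then invoke Theorem~\ref{thm:stareverything}: part~$(1)$ for the tree case, which uses $\RTT^*_2(T_k)=0$ to cover any $\eta>0$, and part~$(3)$ for the cycle case, which uses $\RTT^*_2(C_k)=1/k$ (valid for $k\ge 4$) to cover any $\eta>1/k$. This produces a constant $\alpha=\alpha(\eta)>0$ such that every $n$-vertex graph $G'$ with $n\in k\N$, $\delta(G')\ge \eta n$ and $\alpha^*(G')\le \alpha n$ contains an $F$-factor.

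Feeding this $\alpha$ into the first moment claim yields $C=C(\eta)$ such that, with high probability, $\Gamma:=G(n,p)$ satisfies $\alpha^*(\Gamma)\le \alpha n$. Conditional on this event, for \emph{any} graph $G$ on $V(\Gamma)$ with $\delta(G)\ge \eta n$ the union $G':=G\cup \Gamma$ inherits $\delta(G')\ge \eta n$ from $G$ and $\alpha^*(G')\le \alpha^*(\Gamma)\le \alpha n$ from $\Gamma$, so $G'$ has an $F$-factor by the choice of $\alpha$. The only minor subtlety is the order of quantifiers --- we need the random graph to work \emph{simultaneously} for every adversarial $G$ --- but this is immediate since the only property extracted from $\Gamma$ depends on $\Gamma$ alone. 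The bulk of the work sits in Theorem~\ref{thm:stareverything}; the corollary is essentially its probabilistic packaging, so I do not expect a significant obstacle.
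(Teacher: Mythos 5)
Your proposal is correct and follows exactly the route the paper takes: invoke Theorem~\ref{thm:stareverything}~$(1)$ and~$(3)$ to get, for the given $\eta$, a constant $\alpha>0$ such that $\delta\ge\eta n$ and $\alpha^*\le\alpha n$ force an $F$-factor, and combine this with the observation that $\alpha^*(G(n,p))\le\alpha n$ with high probability once $p\ge C/n$. The paper states the latter bound as a known fact without computation, while you spell out the first-moment estimate; otherwise the two arguments are the same.
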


Corollary \ref{cor:perturbed} follows simply from the relevant parts of \Cref{thm:stareverything} and the fact\footnote{This is a standard application of Janson's inequality.} that for any $\alpha>0$, there exists  $C>0$ such that $\alpha^*_r(G(n,p))\le \alpha n$ with high probability whenever $p\ge C n^{-2/r}$. An analogue of Corollary \ref{cor:perturbed} \ref{RP:4} in the perturbed setting (when $G$ is fixed, as discussed above) 
was recently given by B\"ottcher, Parczyk, Sgueglia and Skokan~\cite{bottcher2021cycle} and a construction detailing the tightness can be found there.
Part \ref{RP:3} of Corollary \ref{cor:perturbed} is in fact not new -- it is implied by a result of B\"ottcher, Kohayakawa, Montgomery, Parczyk, Person and the first author~\cite[Theorem 2]{Bottcher19} on embedding bounded degree spanning trees.

\subsection*{Notation}
Given a graph $G=(V,E)$ we let $v(G)=|V|$ and $e(G)=|E|$. For $U\<V$, $G[U]$ denotes the induced graph of $G$ on $U$. The notation $G-U$ is used to denote the induced graph after removing $U$, that is $G-U:=G[V\setminus U]$. For two subsets $A,B\<V(G)$, we use $e(A,B)$ to denote the number of edges with one endpoint in $A$ and another in $B$, where any possible edge in $G[A\cap B]$ is counted twice. For any $u,v\in V(G)$, we write $N(u,v)$ for all the common neighbours of $u,v$ and $d_A(v)$ for the number of neighbours of $v$ that lie inside $A$. When $A=V$, we drop the subscript and simply write $d(v)$.  Sometimes, we  will use the notation $d^G(v)$ to emphasise that we look at the degree of $v$ relative to the graph $G$.

At times we have statements such as the following. Choose constants $0< c_1 \ll c_2 \ll \ldots \ll c_k= c.$  This should be taken to mean that one can choose constants from right to left so that all the subsequent constraints are satisfied. That is, given some $c>0$ there exist increasing functions $f_i$ for $i\in [k]$ such that whenever $c_{i}\leq f_{i+1}(c_{i+1})$ for all $i\in[k-1]$, all constraints on these constants that are in the proof, are satisfied. We say an event in a probability space holds asymptotically almost surely (and abbreviate this to a.a.s.) when the probability it holds tends to $1$ as some parameter $n$ (usually the number of vertices) tends to infinity. Finally we ignore floors and ceilings where possible, so as not to clutter the exposition.

\section{Proof strategy and organisation of the paper}\label{2}

Recall that all the statements that we will prove are collected in Theorems \ref{thm:RTTeverything} and \ref{thm:stareverything}. We will prove all our upper bounds on $\RTT$ and $\RTT^*$ in the same general framework which will comprise the majority of the paper, spanning Sections~\ref{sec:Abs sets}, \ref{sec:lattice} and \ref{sec:almost} as well as   this section in which we give an overview of  these proofs.
The constructions which give lower bounds on $\RTT$ and $\RTT^*$ (Theorem \ref{thm:RTTeverything}~\ref{RTTii}, \ref{RTTv} and Theorem \ref{thm:stareverything} \ref{RTT*2}, \ref{RTT*4}) are then discussed in \Cref{5}. In \Cref{sec:conclude} we discuss directions for future research. We now discuss our proof strategy for the upper bounds.   

We shall give a unified approach for Theorem \ref{thm:RTTeverything} \ref{RTTi}, \ref{RTTiii}, \ref{RTTiv} and \ref{RTTvi} as well as Theorem \ref{thm:stareverything} \ref{RTT*1} and \ref{RTT*3}. Our proof uses the the absorption method, which has appeared in various guises since the 90s and was widely popularised by R\"{o}dl, Ruci\'{n}ski and Szemer\'{e}di \cite{rodl09} about a
decade ago. In recent years, the method has become an extremely important tool for studying the existence of spanning structures in graphs, digraphs and hypergraphs and novel variations of the method have been developed to overcome certain challenges. Our proofs here build on several of these recent innovations. In particular, we will use techniques developed  in \cite{han16,han17,keevash15} (the so-called lattice based absorption method) as well as a powerful method introduced by Montgomery~\cite{montgo19}   that uses bipartite template graphs with robust matching properties to define absorbing structures.   

In what follows we sketch our proofs, introduce some key concepts and reduce the proofs of our upper bounds to three main propositions (namely  Propositions~\ref{prop:Absorbing sets star}, \ref{prop:Absorbing sets} and~\ref{prop:almost factors}). Throughout this discussion (and indeed the proof), we think of  $G$ as an  $n$-vertex (host) graph and $F$ as a $k$-vertex graph, with the aim of finding an $F$-tiling in $G$.  Most of our definitions will be relative to both $G$ and $F$ but we drop this dependence, thinking of both graphs as being fixed (which most of the time they will be).   

The general idea of absorption is to split the problem of finding a (quasi-)spanning structure into two subproblems. The first major task is to define and find an absorbing structure in the host graph which can `absorb' left-over vertices. We will use the following notion of an absorbing set as used, for example, in \cite{nenadov20}.
\begin{defn} \label{def:absorbing set simple}
        A subset $A\subset V(G)$ is a $\xi$-\emph{absorbing set}  for some constant $\xi>0$ if for any subset $U\subset V(G)\setminus A$ of size at most $\xi n$ such that $k$ divides $|A\cup U|$, the graph $G[A\cup U]$ contains an $F$-factor.
\end{defn}
Let us first focus on the setting of Theorem~\ref{thm:stareverything}, which is easier as we can avoid many technicalities. Here, we have the following key proposition which gives the existence of an absorbing set.

\begin{prop}[Existence of absorbing set] \label{prop:Absorbing sets star}
Fix $k\in \N$ with $k\ge 3$, a $k$-vertex graph $F$ and  $\eta>0$ such that one of the following holds:
\begin{enumerate}
  \item [$(1^*)$] $F=T_k$ for some $k$-vertex tree and $\eta>0$;
  \item [$(2^*)$] $F=C_k$, $k\geq 4$ and $\eta\geq\frac{1}{k}$.
\end{enumerate}
Then for any $\gamma,\mu>0$, there exist  $\alpha,\xi>0$ such that the following holds for all sufficiently large $n\in \N$. If $G$ is an $n$-vertex graph with $\delta(G)\geq (\eta+\mu) n$ and  $\alpha_2^*(G)\leq \alpha n$, then $G$ contains a $\xi$-absorbing set of size at most $\gamma n$.
\end{prop}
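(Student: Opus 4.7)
My plan is to prove this proposition via the absorption method combined with Montgomery's robust bipartite template technique, which by now is standard for proving existence of absorbing structures under weak pseudorandom hypotheses.

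The first step is a \emph{local absorbers lemma}: I will show that for every vertex $v \in V(G)$ there is a rich family of constant-sized vertex sets $L \subset V(G)\setminus\{v\}$ (the ``$v$-absorbers'') with the property that $G[L \cup \{v\}]$ admits an $F$-tiling which includes a copy of $F$ through $v$, and furthermore can be ``rearranged'' (by swapping vertices with a small reservoir) into an $F$-tiling not using $v$. Concretely, for trees $T_k$ an absorber can be taken as the union of two copies of $T_k$ glued at a leaf, one of which has $v$ at that leaf; the rich count (of order $\Omega(n^{2k-2})$) follows from a greedy embedding using only $\delta(G) \geq (\eta+\mu)n$, with $\alpha_2^*(G) \leq \alpha n$ invoked to rule out linear-sized bottlenecks where a greedy extension could fail. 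For cycles $C_k$ with $k \geq 4$ and $\eta \geq 1/k$, I would construct a ``cycle-absorber gadget'' of size $O(k)$ that interchanges one $C_k$ with another $C_k$ passing through $v$; here $\eta \geq 1/k$ is the natural threshold for embedding cycles, and $\alpha_2^*$ is used to close paths into cycles by finding edges between prescribed linear-sized neighbourhoods.

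The second step is to combine these local absorbers via Montgomery's template. I will build a bipartite auxiliary graph $T$ with parts $X$ and $Y$ of sizes $|X|\approx \gamma n / 2$ and $|Y| = |X| + \xi n$ so that $T$ admits a perfect matching saturating $X$ even after deletion of any $\xi n$ vertices from $Y$; such a $T$ exists with bounded degree by an explicit construction or a random bipartite graph argument. I will then associate to each edge of $T$ a candidate local absorber in $G$, chosen greedily (with a random tie-breaking argument handled by Chernoff-type concentration) so that the absorbers are pairwise disjoint. The absorbing set $A$ is then the union of all vertices used by these absorbers together with a reservoir identified with $Y$; its size is $O(\gamma n)$. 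Given any $U \subset V(G)\setminus A$ with $|U|\leq \xi n$ and $k \mid |A\cup U|$, I identify $U$ with a deletion from $Y$, use the robust matching in $T$ to select which absorbers flip modes, and obtain the required $F$-factor on $G[A \cup U]$.

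The main obstacle will be the first step under the weakest regime, namely $F = C_k$ with $\eta$ just above $1/k$, where the minimum degree barely suffices to find a single copy of $C_k$ through a prescribed vertex, let alone a rich family of absorbing gadgets. The delicate point is that while $\alpha_2^*(G) = o(n)$ gives a strong form of edge density between linear-sized sets, it does \emph{not} rule out local obstructions such as small induced configurations that could prevent a greedy closing of a cycle. I expect to handle this by reserving degree slack at each vertex in the absorber construction and appealing to $\alpha_2^*$ at the final closing step to guarantee the needed edge. Trees, by contrast, require essentially no such care, since they extend leaf-by-leaf and any $\eta > 0$ suffices together with the hole condition.
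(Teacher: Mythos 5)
Your proposal takes a genuinely different route from the paper, and the first step as sketched has a gap that the paper goes to some lengths to avoid.

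The paper's proof of this proposition does not attempt a direct polynomial count of absorbers at all. Instead it establishes that under $(1^*)$ or $(2^*)$ and $\alpha_2^*(G)=o(n)$ the \emph{entire} vertex set $V(G)$ is $(F,\beta n, t)$-closed (Lemma~\ref{lem:closed}), meaning any two vertices $u,v$ admit, even after deleting any $\beta n$ vertices, a connector $S$ with $|S|\le kt-1$ such that $G[S\cup\{u\}]$ and $G[S\cup\{v\}]$ both have $F$-factors. This is proved by the lattice-based merging process of Section~\ref{sec:lattice}: one first obtains a closed partition (Lemma~\ref{partition}), then merges parts using transferrals (Lemma~\ref{transferral}), and finally the $\alpha_2^*$ condition is used to show the terminal partition is trivial. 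Closedness is then converted into \emph{linearly many vertex-disjoint} $(F,t)$-absorbers for every $k$-set $S$ (Lemma~\ref{close}), at which point the Nenadov--Pehova form of Montgomery's template (Lemma~\ref{bip-temp}) finishes. Your step two is essentially the same template step, but your step one replaces the entire reachability/closedness machinery with a bare greedy count.

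The gap in your step one is concrete. You assert an $\Omega(n^{2k-2})$ count of absorbers from greedy embedding under $\delta(G)\ge(\eta+\mu)n$; this is precisely the assertion the paper flags as unavailable in Section~\ref{sec:Abs sets}, noting (citing \cite{balog16}) that the degree conditions here are too weak to guarantee polynomially many absorbers, which is why the lattice method and the ``linearly many vertex-disjoint absorbers'' variant of the template are used instead of the classical R\"odl--Ruci\'nski--Szemer\'edi count. Moreover, the absorbers you describe are attached to single vertices $v$, whereas both Definition~\ref{def:absorber} and Lemma~\ref{bip-temp} require absorbers for arbitrary $k$-sets $S\in\binom{V(G)}{k}$: one must show both $G[A_S]$ and $G[A_S\cup S]$ admit $F$-factors, and for $F=C_k$ with $\eta$ just above $\tfrac{1}{k}$ this already requires building chains of $F$-connectors rather than a single glued gadget. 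You correctly identify cycles near the threshold as the main obstacle, but ``reserving degree slack and appealing to $\alpha_2^*$ at the final closing step'' does not resolve it; the paper's resolution is the transferral argument in Lemmas~\ref{v1v2} and~\ref{lem:closed}, which shows that $\alpha_2^*=o(n)$ forces the index vectors $k\mathbf{u}_1$ and $\mathbf{u}_1+(k-1)\mathbf{u}_2$ to be simultaneously robust, collapsing the partition. Without that, or some substitute, step one of your outline does not go through.
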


Unfortunately, in our general setting of studying $\RTT$, that is, with conditions bounding $\alpha_r(G)$ as opposed to $\alpha_r^*(G)$,  it is possible that absorbing sets do not exist in our host graphs. Indeed, this is the case when we have disconnected components as divisibility issues could arise. We instead have to settle for absorbing sets that can only absorb vertices from a part of the graph. This gives rise to the following definition.

\begin{defn}\label{newdef}
For any subset $V'\subset V(G)$, we say a subset $A\subset V(G)$ is a $\xi$-\emph{absorbing set with respect to} $V'$ for some constant $\xi>0$ if for any subset $U\subset V'\setminus A$ of size at most $\xi n$ such that $k$ divides $|A\cup U|$, the graph $G[A\cup U]$ contains an $F$-factor.
\end{defn}
Unless otherwise stated, we always take $V'=V(G)$ and then simply call $A$ a $\xi$-absorbing set, as in Definition \ref{def:absorbing set simple}. Our next major proposition shows that when bounding $\alpha_r(G)$ (that is, in the context of Theorem \ref{thm:RTTeverything}), we can partition our vertex set into parts, each of which has an absorbing set associated to it.

\begin{prop}[Existence of absorbing set partition] \label{prop:Absorbing sets}
Fix $k,r\in \N$ with $k\geq 3$, a $k$-vertex graph $F$, and  $\eta>0$ such that one of the following holds:
\begin{enumerate}
  \item [$(1)$] $F=K_k$, $r=k$ and $\eta>0$;
  \item [$(2)$] $F=K_k$, $r=k-1$ and $\eta\ge\frac{1}{k}$;
  \item [$(3)$] $F=T_k$ for some $k$-vertex tree, $r=2$ and $\eta>0$;
  \item [$(4)$] $F=C_k$, $r=2$ and $\eta\ge\frac{1}{k}$.
\end{enumerate}
Then for any $\gamma,\mu>0$, there exist $\alpha, \xi>0$ such that the following holds for all sufficiently large $n\in \N$. If $G$ is an $n$-vertex graph with $\delta(G)\geq (\eta+\mu)n$ and $\alpha_r(G)\leq \alpha n$, then there is a partition
\[\cP=\{V_1,\ldots,V_C,A_1,\ldots,A_C,B\}\]
of $V(G)$ for some $C\leq \floor{\tfrac{1}{\eta}}$ such that $|A_1\cup\ldots\cup A_C\cup B|\leq \gamma n$,  $A_i$ is a $\xi$-absorbing set with respect to $V_i$ for each $i\in [C]$, and $G[B]$ has a perfect $F$-tiling.
\end{prop}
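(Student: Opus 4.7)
The plan is to combine the lattice-based absorption framework (as developed by Han~\cite{han16,han17} and Keevash--Mycroft~\cite{keevash15}) with Montgomery's bipartite template absorbers~\cite{montgo19}. Conceptually the proof has three layers: partition $V(G)$ into \emph{reachability classes} $V_1,\dots,V_C$; within each class, construct a template-based absorbing set $A_i\subset V_i$; and fix divisibility by pre-tiling a small residual set $B$.

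For the partition, I would define a relation $\sim$ on $V(G)$ by setting $u\sim v$ if there are $\Omega(n^{2k-1})$ \emph{connecting sets} $S$ of size $2k-1$ such that both $G[S\cup\{u\}]$ and $G[S\cup\{v\}]$ contain an $F$-factor; informally, $S$ realises a ``swap'' of $u$ for $v$ in an $F$-tiling. A standard iteration (e.g.\ as in~\cite{han16}) shows that after discarding $o(n)$ vertices, $\sim$ becomes an equivalence relation, giving classes $V_1,\dots,V_C$. The crucial bound $C\le\floor{1/\eta}$ follows from the density claim $|\{u:u\sim v\}|\ge (\eta+\mu/2)n$ for every $v$, which combines $\delta(G)\ge (\eta+\mu)n$ with a supersaturation of $F$-copies guaranteed by $\alpha_r(G)\le\alpha n$. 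This is where the four cases diverge: in case $(1)$ we use that $\alpha_k=o(n)$ gives $K_k$-copies in every linear-sized set; in case $(2)$, $\alpha_{k-1}=o(n)$ provides a $K_{k-1}$ in the common neighbourhood of any two vertices, which forces $\eta\ge 1/k$ so that such pairwise neighbourhoods are linear-sized; in cases $(3)$ and $(4)$ we only need to find an edge (or short cycle) in a linear-sized set, which $\alpha(G)=o(n)$ provides with $\eta>0$ (resp.\ $\eta\ge 1/k$).

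For each $V_i$, I would build $A_i$ by Montgomery's template method. Sample a ``pin set'' $A_i^0\subset V_i$ of size $\gamma n/(2C)$ and a reservoir $Z_i\subset V_i\setminus A_i^0$ of the same order, and fix a bipartite template $T$ with parts $A_i^0$ and $Z_i$, bounded maximum degree, and the robustness property that for every $W\subset Z_i$ of size at most $\xi n$, $T-W$ admits a perfect matching saturating $A_i^0$. Using reachability within $V_i$, greedily assign to each edge $\{a,z\}\in E(T)$ a vertex-disjoint connecting set $S_{a,z}\subset V_i$ such that both $S_{a,z}\cup\{a\}$ and $S_{a,z}\cup\{z\}$ admit $F$-factors: there are $\Omega(n^{2k-1})$ candidate sets per edge and only $O(n)$ edges, so the greedy embedding succeeds. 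Set $A_i:=A_i^0\cup Z_i\cup\bigcup_e S_{a,z}$. The absorbing property (Definition~\ref{newdef}) follows in the usual way: given leftover $U\subset V_i\setminus A_i$ with $|U|\le\xi n$ and $|A_i\cup U|\equiv 0\pmod k$, first $F$-tile $U$ together with a suitable subset $Z'_i\subset Z_i$ via reachability, then apply the robust matching of $T-Z'_i$ to decide which absorbers to ``flip'' onto their $a$-side. Finally, $B$ absorbs the $o(n)$ residual vertices outside the equivalence classes together with $O(1)$ $F$-copies per $V_i$ chosen to align each $|V_i\setminus(A_i\cup B)|$ with $k\N$; both are constructed greedily from $\delta(G)\ge (\eta+\mu)n$ and supersaturation, keeping $|B|\le\gamma n$. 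To maintain disjointness, $B$ must be reserved before carrying out the template construction.

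The principal obstacle is the reachability density bound in cases $(1)$ and $(2)$, which requires $\Omega(n^{2k-1})$ aligned pairs of $K_k$-configurations through arbitrary $u,v$, not merely the existence of some $K_k$. This is precisely where the Ramsey--Tur\'an-type hypothesis interacts with the minimum degree condition, and it exposes the coupling between $\eta$ and $r$: weakening $r$ below $k-1$ would permit $K_{k-1}$-free neighbourhoods, breaking reachability; and $\eta<1/k$ in cases $(2)$ or $(4)$ would allow pairwise neighbourhoods to be sublinear, similarly preventing the connecting step. A secondary but delicate issue is the divisibility bookkeeping between the reachability partition and $B$, which must be handled at the outset so that the $A_i$ and $B$ remain disjoint while every $|V_i\setminus(A_i\cup B)|$ is a multiple of $k$.
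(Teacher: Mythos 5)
Your high-level three-layer plan (reachability partition, within-class Montgomery-template absorbers, small residual set $B$) matches the paper's architecture, but two of the key steps as you describe them would fail under the present hypotheses.

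\textbf{Polynomial count of connecting sets.} You define $u\sim v$ via $\Omega(n^{2k-1})$ connecting sets of size $2k-1$, and again rely on ``$\Omega(n^{2k-1})$ candidate sets per edge'' to run the greedy template embedding. This is the classical R\"odl--Ruci\'nski--Szemer\'edi/H\`an--Person--Schacht style of counting, and it is \emph{not} available here: with $\delta(G)\ge(\eta+\mu)n$ and $\eta$ as small as $1/k$, there is no supersaturation forcing polynomially many aligned $F$-pairs through arbitrary $u,v$. The paper flags exactly this obstruction (see the discussion after Definition~\ref{def:absorber}), and this is why it uses the Lo--Markstr\"om notion of $(F,m,t)$-reachability — a robustness property (a connector exists avoiding any small forbidden set), not a counting property — and then Lemma~\ref{bip-temp+}, which only needs $\Omega(n)$ vertex-disjoint absorbers per $k$-set (built greedily from robustness, not from a count). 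You do invoke the template machinery, but your input to it is a hypothesis you have not established and cannot establish.

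\textbf{The reachability density claim.} You assert $|\{u:u\sim v\}|\ge(\eta+\mu/2)n$ for all $v$, which if true would immediately give $C\le\lfloor 1/\eta\rfloor$. But this is far too strong. When $\eta<1/2$, the common neighbourhood $N(u,v)$ of two arbitrary vertices can be empty (two cliques glued at $\eta n$ vertices), so one cannot connect arbitrary pairs, and the elementary double-count (cf.\ the paper's Lemma~\ref{reachable}) only produces a reachable set of size $\approx\mu n/4$. The resulting initial partition can have many more than $\lfloor 1/\eta\rfloor$ parts. The paper's route to the correct $C$ is therefore a \emph{merging process}: starting from the partition of Lemma~\ref{partition}, it repeatedly uses transferrals (Lemmas~\ref{transferral} and~\ref{v1v2}, with a separate case analysis for $F=K_k$, $r=k-1$) to merge parts whenever there is a transferral vector in the robust lattice, terminating only when parts are simultaneously closed, large, and have few edges between them. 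This merging — and in particular the ``few edges between parts'' property, which is also what lets one identify and tile over the small set of vertices with many cross-part neighbours, and what makes $k\mathbf{u}_i$ robust so that Lemma~\ref{close} applies — is entirely absent from your proposal. Without it, the bound $C\le\lfloor 1/\eta\rfloor$, and the applicability of Lemma~\ref{bip-temp+}, are unproved.

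A minor point: the divisibility bookkeeping you propose to do inside $B$ is not needed. In the paper, $B$ is simply the vertex set of a greedy tiling covering the few ``bad'' vertices; divisibility is handled afterwards, at the end of the whole argument, by leaving at most $k-1$ vertices of each $V_i$ uncovered (which is exactly what the quasiperfect notion allows).
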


Propositions \ref{prop:Absorbing sets star} and \ref{prop:Absorbing sets} deal with finding absorbing structures in our host graph, which will be used to finish our $F$-tilings. The second major task in  absorption arguments for tilings is to find a tiling that covers most of the vertices, leaving just a small linear number of vertices uncovered. We will sometimes call such a tiling an \emph{almost perfect tiling} or an \emph{almost factor}. Our final major proposition shows that in all the settings we are interested in, almost perfect tilings do indeed exist.

\begin{prop}[Existence of almost perfect tilings] \label{prop:almost factors}
Fix $r,k\in \N$ with $k\geq 3$, a $k$-vertex graph $F$ and  $\eta>0$ such that one of the following holds:
\begin{enumerate}
   \item [$(1)$] $F=K_k$, $r=k$ and $\eta>0$;
  \item [$(2)$] $F=K_k$, $r=k-1$ and $\eta\ge\frac{1}{k}$;
  \item [$(3)$] $F=T_k$ for some $k$-vertex tree, $r=2$ and $\eta>0$;
  \item [$(4)$] $F=C_k$, $r=2$ and $\eta\ge\frac{1}{k}$.
\end{enumerate}
Then for any $\delta, \mu>0$ there exists an $\alpha>0$ such that the following holds for all sufficiently large $n\in \N$. If $G$ is an $n$-vertex graph with $\delta(G)\geq (\eta+\mu)n$ and 
$\alpha_r(G)\leq \alpha n$, then $G$ contains an $F$-tiling covering all but $\delta n$ vertices.

\end{prop}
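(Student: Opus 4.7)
The plan is to combine Szemer\'edi's regularity lemma with a case-by-case analysis inside the reduced graph, and then lift the resulting tiling to $G$ using standard regularity-based embeddings.

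First I would apply the regularity lemma to $G$ with parameter $\varepsilon \ll \delta, \mu$, producing an $\varepsilon$-regular partition $V_0, V_1, \dots, V_L$ and, after the usual cleaning of sparse/irregular pairs, a reduced graph $R$ on $L$ vertices. Standard counting gives $\delta(R) \geq (\eta + \mu/2) L$, and the Ramsey--Tur\'an hypothesis transfers by a routine averaging argument: a large $K_r$-free set in $R$ would blow up to an essentially $K_r$-free set of linear size in $G$, contradicting $\alpha_r(G) \leq \alpha n$; so we may assume $\alpha_r(R) \leq \alpha' L$ for a suitably small $\alpha'$.

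Second, the core task is to find an $F$-tiling in $R$ covering all but a $\delta/2$-fraction of its vertices. In case $(1)$ this is immediate by greedily extracting copies of $K_k$ from $R$ until fewer than $\alpha' L$ vertices remain; the remainder is $K_k$-free by the hypothesis on $\alpha_k(R)$. In case $(3)$, I would greedily pick an edge in any large induced subgraph (guaranteed by $\alpha(R) = o(L)$) and extend it leaf-by-leaf to a copy of $T_k$, using the fact that the minimum-degree condition persists when removing constantly many vertices, then iterate. The harder cases are $(2)$ and $(4)$. Here I would take a maximum $F$-tiling $\mathcal{M}$ and assume for contradiction that the leftover set $U \subseteq V(R)$ has linear size. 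The Ramsey--Tur\'an condition furnishes a \emph{seed} structure inside $U$ (a copy of $K_{k-1}$ for case $(2)$, an edge for case $(4)$), and the minimum-degree hypothesis $\eta > 1/k$ forces this seed to have many neighbours distributed across the copies of $\mathcal{M}$. From this I would construct a swap that removes a single copy $C \in \mathcal{M}$ and replaces it with two copies of $F$ --- one built from the seed together with an appropriately chosen vertex $v \in C$, and one built from $C \setminus \{v\}$ together with a vertex of $U$ --- thereby enlarging the tiling and contradicting maximality.

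Third, I would lift the $F$-tiling of $R$ back to $G$: each copy of $F$ in $R$ corresponds to $k$ pairwise-regular clusters in $G$, and within these clusters a short greedy/slicing argument yields vertex-disjoint copies of $F$ covering all but an $\varepsilon$-fraction of each cluster. Together with the exceptional set $V_0$ of size $\leq \varepsilon n$, the total number of uncovered vertices is at most $\delta n$, as required. The main obstacle will be the swap argument for cases $(2)$ and $(4)$: one must choose the seed inside $U$ and the vertex $v \in C$ to be swapped simultaneously, so that both of the new copies of $F$ are legitimate, and this is precisely where the threshold $\eta > 1/k$ is needed --- it rules out the clique-partition obstruction of Propositions~\ref{prop:lower K_k alpha_k-1} and~\ref{prop:lower alpha_2} in which every potential swap is blocked. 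I expect this swap step to require some care about which vertex of $C$ to play the role of $v$, in order to maintain the specific adjacency pattern of $F$ (trivial for $F = K_k$, but requiring attention for $F = C_k$ where one must preserve cyclic order).
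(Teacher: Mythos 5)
Your overall architecture (apply regularity, solve a tiling problem in a derived setting, lift back to $G$) is the right shape, and your disposal of cases $(1)$ and $(3)$ by direct greedy extraction is correct (though for these cases regularity is unnecessary: $\alpha_k(G)\le\alpha n$ or Lemma~\ref{universality} already gives a greedy $F$-tiling of $G$ itself). However, the central step for cases $(2)$ and $(4)$ has a genuine gap.

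The problem is your claim that the Ramsey--Tur\'an hypothesis transfers to the reduced graph, i.e.\ that $\alpha_r(G)\le\alpha n$ forces $\alpha_r(R)\le\alpha' L$ by a ``routine averaging argument.'' This is false. Consider $G$ to be $\sqrt n$ vertex-disjoint cliques each of size $\sqrt n$. Then $\alpha_r(G)=(r-1)\sqrt n=o(n)$, yet after applying the degree form of the regularity lemma each clique is scattered across many clusters and the density between any two clusters is $O(n^{-1/2})$, so the reduced graph $R$ is (essentially) edgeless and $\alpha_r(R)=L$. In general, the edges responsible for killing large $K_r$-free sets in $G$ may live entirely inside clusters or inside sparse pairs, and such edges are invisible to $R$. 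Consequently your proposed approach of finding an $F$-tiling directly in $R$ cannot work: with $F=C_k$, $k$ odd, a reduced graph satisfying only $\delta(R)\ge\ell/k$ could be the complete bipartite graph $K_{\ell/k,(k-1)\ell/k}$, which contains no odd cycle at all, so no swap argument can rescue it. The threshold $\eta>1/k$ does not rule this out at the level of $R$.

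The paper sidesteps this entirely by never trying to tile $R$ with $F$. Instead, Proposition~\ref{prop:almost factors2} applies the Shokoufandeh--Zhao theorem (Theorem~\ref{H-tiling}) to tile $R$ by \emph{stars} $K_{1,k-1}$; since $\chi_{\mathrm{cr}}(K_{1,k-1})=k/(k-1)$, the minimum degree $\delta(R)\ge\ell/k$ is exactly sufficient, with no independence assumption on $R$ needed. Each star corresponds to a centre cluster $V_{i1}$ regular-and-dense to $k-1$ leaf clusters. One then splits $V_{i1}$ into $k-1$ equal pieces $U_2,\dots,U_k$, pairs $U_j$ with the $j$-th leaf cluster, and greedily pulls out copies of $F$: $\epsilon$-regularity gives a vertex $x\in U_j$ with $\Omega(\epsilon L)\ge k\alpha n$ neighbours in the leaf cluster, and at that point the hypothesis $\alpha_r(G)\le\alpha n$ \emph{on $G$, not on $R$} (via Corollary~\ref{sumner} in the $C_k$ case) produces a copy of $F$ in that neighbourhood together with $x$. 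This is the point at which the Ramsey--Tur\'an condition is actually used. So the key missing idea in your proposal is to decouple the two hypotheses: use the minimum-degree condition of $R$ to find a star tiling, and reserve the $\alpha_r$ condition for the lifting step inside $G$.
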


With these propositions in hand we can easily prove our upper bounds on $\RTT$ and $\RTT^*$. We begin with the upper bounds in Theorem~\ref{thm:stareverything}.

\begin{proof}[Proof of Theorem \ref{thm:stareverything} \ref{RTT*1} and \ref{RTT*3}]
The proofs of \ref{RTT*1} and \ref{RTT*3} are essentially identical and so we only prove part \ref{RTT*3}. Fixing some $k\in \N$ with $k\geq 4$, $\eta=\tfrac{1}{k}$ and some $\mu>0$, choose $0< \alpha\ll\delta\ll\xi\ll \gamma\ll\mu$. 
It suffices to show that for all sufficiently large $n\in k\N$, any graph $G$ with $n$ vertices, $\delta(G)\geq (\eta+\mu)n$ and $\alpha_2^*(G)\leq \alpha n$ contains a perfect $C_k$-tiling. So fix such a graph $G$.

Proposition~\ref{prop:Absorbing sets star} implies that  $G$ contains a $\xi$-absorbing set $A$ of size at most $\ga n$. Let $G'=G-A$ and note that  $\de(G')\ge(\eta+ \tfrac{\mu}{2})v(G') $ due to the fact that $\gamma\ll \mu$. By applying Proposition~\ref{prop:almost factors} on $G'$ (and noting that $\alpha_2(G)\le 2\alpha_2^*(G)$), we obtain a $C_k$-tiling $\mathcal{M}$ that covers all but a set $U$ of at most $\de n$ vertices in $G'$. By the absorbing property of $A$, $G[A\cup U]$ contains a $C_k$-factor $\mathcal{R}$, which together with $\mathcal{M}$ forms a $C_k$-factor in $G$. Note that we used that $|A\cup U|=n-k|\mathcal{M}|$ is divisible by $k$ here as well as the fact that $\delta\ll \xi$.
\end{proof}

The proof of the upper bounds for Ramsey--Tur\'an tiling thresholds also follow easily from Propositions~\ref{prop:Absorbing sets} and \ref{prop:almost factors}, with some minor technicalities.

\begin{proof}[Proofs of Theorem \ref{thm:RTTeverything} \ref{RTTi}, \ref{RTTiii}, \ref{RTTiv} and \ref{RTTvi}]

Consider parameters $r,k\in\N$, a $k$-vertex graph $F$ and  $\eta>0$  so that one of the following holds:
\begin{enumerate}
  \item [$(1')$] $F=K_k$, $r=k$;
  \item [$(2')$] $F=K_k$, $r=k-1$ and $\eta>\frac{1}{k}$;
  \item [$(3')$] $F=T_k$, $r=2$ ;
  \item [$(4')$] $F=C_k$, $r=2$ and $\eta>\frac{1}{k}$.
\end{enumerate}
In order to prove all the relevant theorems, it suffices to show that for any such choice of $r,k,F$ and $\eta>0$, there exists an $\alpha>0$ such that for all sufficiently large $n$, any $n$-vertex graph $G$ with $\delta(G)\geq \eta n$ and $\alpha_r(G)\leq \alpha n$ contains an $\eta$-quasiperfect $F$-tiling. Also note that cases $(1')-(4')$ here are precisely
cases $(1)-(4)$ in Propositions~\ref{prop:Absorbing sets} and \ref{prop:almost factors} with the slight tweak that in cases  $(2')$ and $(4')$ we insist that the constant $\eta$ is strictly larger than $\tfrac{1}{k}$.

So let us fix some arbitrary choice of $r,k,F$ and $\eta$ satisfying one of $(1')-(4')$. Let $\{\tfrac{1}{\eta}\}=\tfrac{1}{\eta}-\floor{\tfrac{1}{\eta}}$ be the fractional part of $\tfrac{1}{\eta}$ and $\eta'>0$ be defined such that $\tfrac{1}{\eta'}=\tfrac{1}{\eta}+\tfrac{1}{2}(1-\{\tfrac{1}{\eta}\})$. Then by letting $\mu=\eta-\eta'$, it is easy to check that $0<\mu<\eta$ and $\lfloor\tfrac{1}{\eta}\rfloor=\lfloor\tfrac{1}{\eta'}\rfloor$. Now we have that  it suffices to find an $F$-tiling  covering all but $\lfloor\tfrac{1}{\eta'}\rfloor(k-1)$ vertices of our host graph $G$.

Choose $0< \alpha\ll\delta\ll\xi\ll \gamma\ll\eta',\mu$ and fix some $n$-vertex graph $G$ with $n$ sufficiently large,  $\delta(G)\ge (\eta'+\mu)n=\eta n$ and $\alpha_r(G)\le \alpha n$.  By Proposition~\ref{prop:Absorbing sets} (applied with $\eta'$ playing the role of $\eta$), we have that there exists a partition
\[\cP=\{V_1,\ldots,V_C,A_1,\ldots,A_C,B\}\]
of $V(G)$ for some $C\leq \floor{\tfrac{1}{\eta'}}$ such that $|A_1\cup\ldots\cup A_C\cup B|\leq \gamma n$,  $A_i$ is a $\xi$-absorbing set with respect to $V_i$ for each $i\in [C]$, and $G[B]$ has a perfect $F$-tiling. Let $\cB$ denote the $F$-tiling covering the vertices in $B$ and let  $R=\left(\bigcup_{i\in[C]}A_i\right)\cup B$.
Furthermore, let $G'=G-R$. Then due to our choice of constants,
\[\de(G')\ge \left(\eta' +\tfrac{\mu}{2}\right) n\geq \left(\eta' +\tfrac{\mu}{2}\right) v(G').\]
Therefore due to Proposition~\ref{prop:almost factors}, we obtain an $F$-tiling $\mathcal{M}$ that covers all but a set $U$ of at most $\de n$ vertices in $G'$. Now by the absorbing property of each $A_i$, for any subset $U_i\subset U\cap V_i$ with $|A_i\cup U_i|\in k\mathbb{N}$ and $|U_i|\leq \xi n$, $G[A_i\cup U_i]$ contains an $F$-factor.
Thus we can remove at most $k-1$ vertices from each $U_i$ to get a set satisfying $|A_i\cup U_i|\in k\mathbb{N}$ for each $i\in [C]$ and hence an $F$-tiling $\cF_i$ covering $A_i\cup U_i$, using that $\de\ll \xi$.
Combining the $\cF_i$, $\mathcal{B}$ and $\mathcal{M}$, we obtain an $F$-tiling covering all but at most $C(k-1)$ vertices in $G$.
This completes the proof, noting that $C\le \floor{\tfrac{1}{\eta}}$.
\end{proof}

In order to prove our upper bounds then, it suffices to prove Propositions~\ref{prop:Absorbing sets star}, \ref{prop:Absorbing sets} and \ref{prop:almost factors}. We first deal with finding absorbing sets. In Section~\ref{sec:Abs sets}, we show how we can derive the existence of absorbing sets by showing the existence of many smaller structures which we call absorbers and we reduce Propositions~\ref{prop:Absorbing sets star} and \ref{prop:Absorbing sets} to Lemmas~\ref{lem:detectpartition} and \ref{lem:closed}. In Section~\ref{sec:lattice} we then prove these lemmas, showing how to partition the vertex set of the host graph to obtain sets from which we can build our absorbing sets. Section~\ref{sec:almost} is then devoted to proving the existence of almost perfect tilings and proving Proposition~\ref{prop:almost factors}. In fact, parts of Proposition~\ref{prop:almost factors} are immediate from our conditions but we defer this discussion to Section \ref{sec:almost}.

\section{Absorbing sets} \label{sec:Abs sets}

In this section, we discuss how to define and find absorbing sets in our host graph. We define the following key notion of  \emph{absorbers} (following the notation in \cite{nenadov20}) which we will used as `building blocks' to build absorbing structures.
As in the previous section, we let $G$ be an $n$-vertex graph and $F$ be a $k$-vertex graph throughout this section and think of these as both being fixed in our definitions.
\begin{defn} \label{def:absorber}
For any $S\in \binom{V(G)}{k}$ and an integer $t$, we say a subset $A_S\subset V(G)\setminus S$ is an $(F,t)$-\emph{absorber} for $S$ if $|A_S|\le k^2t$ and both $G[A_S]$ and $G[A_S\cup S]$ contain an $F$-factor.
\end{defn}

Such constant sized sets  are widely used in order to build absorbing structures. However many of these constructions of absorbing sets, for example by R\"{o}dl, Ruci\'{n}ski and Szemer\'{e}di \cite{rodl09}
and H\`{a}n, Person, and Schacht \cite{han09}, rely on the property that every $k$-subset in $V(G)$ has polynomially many   absorbers of a certain type (e.g.\ $\Omega(n^a)$ absorbers $A$ with $v(A)=a$). In our case, as pointed out in \cite{balog16}, the degree conditions in our results are not strong enough to guarantee this property.  We therefore make use of a new construction which guarantees a $\xi$-absorbing set provided that \emph{every} $k$-set $S$ has \emph{linearly many vertex-disjoint $(F,t)$-absorbers}. The key idea that makes this possible stems back to Montgomery~\cite{montgo19} and has since found many applications in absorption arguments. Here, we will follow  the  approach (and notation) of  Nenadov and Pehova \cite{nenadov20} who proved\footnote{To be  exact, the lemma of Nenadov and Pehova \cite[Lemma 2.2]{nenadov20} differs slightly from ours here as they define absorbers to have at most $kt$ vertices whereas we define the upper bound to be $k^2t$. This minor adjustment has no bearing on the statement of this lemma or its proof.} the following.

\begin{lemma}\emph{\cite[Lemma 2.2]{nenadov20}}\label{bip-temp}
Let $k,t\in \N$, $F$  a graph on $k$ vertices and  $\gamma >0$ . Then there exists $\xi = \xi(k,t,\gamma)>0$ such that  the following holds. If $G$ is an $n$-vertex graph such that for
every $S\in \binom{V(G)}{k}$ there is a family of at least
$\gamma n$ vertex-disjoint $(F,t$)-absorbers, then $G$ contains a $\xi$-absorbing set of size at most $\gamma n$.
\end{lemma}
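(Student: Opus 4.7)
The proof will follow the bipartite template method of Montgomery~\cite{montgo19}, adapted as in Nenadov--Pehova~\cite{nenadov20} to the setting of $F$-tilings. The absorbing set $A$ will be built as the disjoint union of a \emph{pool} of $k$-tuples in $G$ and a family of small absorbers indexed by the edges of a template graph; the robust matching properties of the template provide the flexibility to accommodate any sufficiently small deficit set $U$.

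First I would invoke a Montgomery-type bipartite template lemma to produce a graph $H$ on parts $X, Y$ of sizes linear in a parameter $m$, with maximum degree bounded by a constant $\Delta = \Delta(k,t,\gamma)$, and with the following robustness property: perfect matchings of the type required survive the deletion of any sufficiently small subset of vertices from one or both sides. Then, with $m = \beta n$ for a small parameter $\beta \ll \gamma$, I would greedily embed the template into $G$ by choosing a pool of $|Y|$ vertex-disjoint $k$-tuples $\{T_y\}_{y\in Y}$ and, for each edge $xy \in E(H)$, an $(F,t)$-absorber $A_{xy}$ for $T_y$, ensuring that all chosen structures are pairwise vertex-disjoint. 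At any step of this greedy procedure only $O(m) = O(\beta n)$ vertices are forbidden, whereas the hypothesis guarantees at least $\gamma n$ pairwise vertex-disjoint $(F,t)$-absorbers for each $k$-set; since $\beta \ll \gamma$, we can always find a fresh absorber avoiding the forbidden vertices. Setting $A := \bigcup_{y\in Y} T_y \cup \bigcup_{xy\in E(H)} A_{xy}$ yields $|A| = O(\beta n) \leq \gamma n$.

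To verify that $A$ is $\xi$-absorbing for a suitable $\xi = \xi(k,t,\gamma) > 0$, I would take any $U \subseteq V(G)\setminus A$ with $|U|\leq \xi n$ and $k \mid |A\cup U|$, partition $U$ into $s = |U|/k$ disjoint $k$-tuples $U_1,\ldots,U_s$, and use the robustness of $H$ to select a matching $M \subseteq E(H)$ that dictates which absorbers to operate in \emph{absorb mode}, in which $A_{xy}\cup T_y$ is $F$-tiled using the absorber property, versus \emph{default mode}, in which $A_{xy}$ alone is $F$-tiled. The pool $k$-tuples $T_y$ whose index $y$ is left unmatched by $M$ can then be packaged together with $U_1,\ldots,U_s$ into $F$-copies, using the flexibility engineered into the pool. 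Careful bookkeeping forces $|M|$ to equal a specific value determined by the divisibility condition $k\mid |A\cup U|$, and the template's robustness guarantees that such an $M$ always exists.

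The main obstacle is the third step: translating a template matching into a genuine $F$-factor of $G[A\cup U]$. To handle this one designs the pool so that each $T_y$ is itself a copy of $F$ in $G$ (which is possible because absorbers induce $F$-factors and so copies of $F$ abound throughout $G$) and builds enough slack into the sizes of $X$ and $Y$ so that the matching $M$ can be chosen with exactly the required number of edges; the covering of the $U_i$'s together with the unmatched pool $k$-tuples is then arranged through a one-to-one pairing $T_y \leftrightarrow U_i$ respecting $G$-adjacency. All the remaining ingredients --- the template construction, the greedy embedding of pool and absorbers, and the size calculations --- are standard.
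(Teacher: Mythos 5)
The high-level framework you describe (a Montgomery bipartite template with bounded degree, a greedy embedding of absorbers so that only $O(\beta n)$ vertices are forbidden at any step, and a matching in the template deciding which absorbers run in ``absorb mode'' versus ``default mode'') matches the method used in the paper (see the proof of the more general Lemma~\ref{bip-temp+}, which this paper proves, whereas Lemma~\ref{bip-temp} itself is cited as a black box from Nenadov--Pehova). However, the absorption mechanism you propose at the crucial step does not work, and there is a genuine gap.

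You write that unmatched pool $k$-tuples $T_y$ are ``packaged together with $U_1,\ldots,U_s$ into $F$-copies'' via ``a one-to-one pairing $T_y \leftrightarrow U_i$ respecting $G$-adjacency.'' This is not grounded in anything you have built. Each absorber $A_{xy}$ is an $(F,t)$-absorber for the fixed $k$-set $T_y$, so it can produce an $F$-factor of $G[A_{xy}]$ or of $G[A_{xy}\cup T_y]$, but says nothing about any $U_i$. If $T_y$ is itself a copy of $F$, an unmatched $T_y$ can be tiled on its own --- but then $U_i$ remains uncovered, and nothing in the hypothesis produces a copy of $F$ spanning vertices of $T_y$ and vertices of $U_i$ simultaneously. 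In other words, your construction has no component designed to ingest the deficit set $U$.

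The actual mechanism is structurally different. One side of the template (Lemma~\ref{robust}) is $X_m\cup Y_m$ with $|X_m|=(1+\be)m$, and its robustness is specifically that $B[X_m'\cup Y_m, Z_m]$ has a perfect matching for any $X_m'\subset X_m$ of size $m$. One maps $X_m\cup Y_m$ to individual \emph{vertices} of $G$ and $Z_m$ to $(k-1)$-sets; absorbers are chosen for the $k$-sets formed by an edge's two endpoints. The $\be m$ units of slack in $X_m$ correspond to a set $Q\subset X$ of actual vertices which is \emph{reserved} for absorbing $U$: one must exhibit an $F$-factor of $G[Q\cup U]$. This is achieved by choosing, for each $v\in U$ (plus a padding set $X'\subset X$ for divisibility), a $(k-1)$-set $F_v\subset X$ such that $\{v\}\cup F_v$ spans a copy of $F$; these come from $F$-fans (Definition~\ref{def:fan}), which in the present paper's generalisation are a separate hypothesis, and in the original Nenadov--Pehova lemma follow from the absorber hypothesis because each absorber $A_S$ for a $k$-set $S\ni v$ contributes, within the $F$-factor of $G[A_S\cup S]$, a copy of $F$ through $v$, and disjoint absorbers give essentially disjoint such copies. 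You never derive or use this fan structure, and without it there is no way for the deficit set $U$ to be covered. This is the missing idea.
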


Lemma \ref{bip-temp} is applicable in the setting of bounding large bipartite holes and will be applied in the proof of Theorem \ref{thm:stareverything} \ref{RTT*1} and \ref{RTT*3}. However, in general there might be $k$-sets with few  absorbers or even none at all\footnote{This is the case when the host graph consists of a constant number of cliques and the $k$-set has all but one vertex in the same clique, for example.}.
 To overcome this issue, we develop a generalisation of Lemma \ref{bip-temp} in the following, which provides a family of pairwise disjoint absorbing sets.

 \subsection{Building absorbing sets} \label{sec:building absorbing sets}

 Before introducing our key lemma that proves the existence of absorbing sets given the existence of many local structures, we need to introduce a further simple notion.

 \begin{defn} \label{def:fan}
 For a vertex $v\in V(G)$ and subset of vertices $U\subset V(G)$,  an $F$-fan $\cF_v$ at $v$ in $U\subset V(G)$ is a collection of pairwise disjoint sets $S\subset U\setminus \{v\}$ such that for each $S\in \cF_v$, we have that $|S|=k-1$ and  $\{v\}\cup S$ spans a copy of $F$. If $U=V(G)$, we simply refer to an $F$-fan at $v$.  We further define the vertex set $V(\cF_v)=\cup_{S\in \cF_v}S$  of a fan to be the union of the sets contained in $\cF_v$   and the \emph{size} of $\cF_v$ as the number of sets in $\cF_v$.
 \end{defn}

 Note that a fan of size $\ell$ has precisely $\ell(k-1)$ vertices, recalling that the sets in a fan are pairwise disjoint.
In order to build an absorbing structure, we will also make use of an auxiliary `bipartite template'.  The following lemma (in a slightly different form)  was first introduced by Montgomery \cite{montgo19}. Here we state the form given by Nenadov and Pehova  \cite{nenadov20}.

\begin{lemma}
\emph{ \cite[Lemma 2.8]{montgo19}; \cite[Lemma 2.3]{nenadov20}}\label{robust}
Let~$\be>0$ be given.  There exists $m_0$ such that the following holds for every $m\geq m_0$. There exists a bipartite graph $B$ with vertex classes $X_m\cup Y_m$ and $Z_m$ and maximum degree $\De(B)\leq40$, such that $|X_m|=m+\be m$, $|Y_m| =2m$ and $|Z_m| = 3m$, and for every subset $X_m'\subseteq X_m$ with $|X_m'|=m$, the induced graph $B[X_m'\cup Y_m, Z_m]$ contains a perfect matching.
\end{lemma}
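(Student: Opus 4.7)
The plan is to give a probabilistic construction of $B$ and verify both conclusions via concentration and a union bound. Take a constant $d\le 20$ and let each vertex $z\in Z_m$ independently pick a uniformly random $d$-subset of $X_m\cup Y_m$ as its neighborhood. Then every $z\in Z_m$ has degree exactly $d$, while each $v\in X_m\cup Y_m$ has degree equal to a sum of $3m$ indicators each of mean roughly $d/((3+\be)m)$, so its expected degree is $3d/(3+\be)<d$. A Chernoff bound followed by a union bound over the $(3+\be)m$ vertices of $X_m\cup Y_m$ shows that with high probability every degree in $X_m\cup Y_m$ is bounded by $40$.

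The substantive task is the matching property. Fix $X_m'\subseteq X_m$ with $|X_m'|=m$ and set $W=X_m'\cup Y_m$, so $|W|=3m=|Z_m|$. By Hall's theorem, $B[W,Z_m]$ has a perfect matching if and only if $|N_B(S)\cap Z_m|\ge |S|$ for all $S\subseteq W$ (and symmetrically for subsets of $Z_m$), and by a standard symmetry/double-counting argument it suffices to verify this for $|S|\le 3m/2$. For a fixed $S\subseteq W$ of size $s$, each $z\in Z_m$ lies in $N_B(S)$ independently with probability at least $c_1\min(1,s/m)$ for some constant $c_1=c_1(d,\be)>0$, so $|N_B(S)\cap Z_m|$ stochastically dominates a binomial with mean $\Omega(s)$. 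A Chernoff-type tail bound then gives
\[
\Pr\bigl[|N_B(S)\cap Z_m|<s\bigr]\le \exp(-c_2\, s)
\]
for a constant $c_2=c_2(d,\be)>0$ that can be made arbitrarily large by enlarging $d$.

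The main obstacle is then a careful union bound: I need to control failure \emph{simultaneously} over all $\binom{(1+\be)m}{m}$ choices of $X_m'$ and all subsets $S$ on each side. Writing the total failure probability as at most
\[
\binom{(1+\be)m}{m}\cdot 2\sum_{s=1}^{3m/2}\binom{3m}{s}\exp(-c_2\, s),
\]
one checks that taking $d$ sufficiently large (depending on $\be$) forces $c_2$ to dominate the combined entropies $\log\binom{(1+\be)m}{m}$ and $\log\binom{3m}{s}$, which makes this expression $o(1)$ as $m\to\infty$. Hence some realization of $B$ simultaneously satisfies the degree bound and the Hall condition for every $X_m'$, yielding the lemma. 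The delicate point is that $d$ must be chosen large enough to survive both union bounds while still producing $\De(B)\le 40$; following Montgomery's original argument, one can shave the constants by a more refined degree analysis than the naive Chernoff estimate above, but the probabilistic skeleton is the one just described.
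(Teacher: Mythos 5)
This lemma is imported by the paper from Montgomery and from Nenadov--Pehova, so there is no in-paper proof to compare against; evaluating your argument on its own terms, the probabilistic skeleton has a gap that is structural, not merely a matter of ``shaving constants.''

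The key problem is the small-$S$ case of the union bound. In your construction only vertices of $Z_m$ choose random $d$-subsets, so for a fixed $v\in X_m\cup Y_m$ the event that $v$ is isolated has probability roughly $(1-d/((3+\beta)m))^{3m}\approx e^{-3d/(3+\beta)}$, a fixed constant once $d$ is fixed. Hence the expected number of isolated vertices in $X_m\cup Y_m$ is $\Theta(m)$, and with high probability some vertex of $Y_m$ (which is automatically in $W=X'_m\cup Y_m$ for every $X'_m$) is isolated, immediately killing Hall's condition. Equivalently, the claimed bound $\Pr[|N_B(S)\cap Z_m|<s]\le e^{-c_2 s}$ with $c_2$ a constant cannot survive the union bound at $s=1$: there are $\Theta(m)$ singletons, so you would need $c_2>\log m$, which forces $d=\Omega(\log m)$ and destroys the requirement $\Delta(B)\le 40$. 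Enlarging $d$ cannot help; no choice of constant $d$ closes the union bound for $s=O(1)$.

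Two secondary remarks. First, the factor $\binom{(1+\beta)m}{m}$ in your union bound is both unnecessary and fatal: $N_B(S)$ depends only on $S$, not on which $X'_m\supseteq S\cap X_m$ you later pick, so it suffices to union over $S\subseteq X_m\cup Y_m$ with $|S\cap X_m|\le m$ (at most $\binom{(3+\beta)m}{s}$ sets of size $s$); as written, the $e^{\Theta(m)}$ factor makes the displayed expression diverge for every $s=o(m)$ even before the small-$s$ issue. Second, the known proofs of this template lemma are built precisely to evade the small-$S$ problem you run into: they ensure every vertex of $X_m\cup Y_m$ also has a \emph{fixed} number of neighbours in $Z_m$ (e.g.\ by having both sides choose random constant-size neighbourhoods, or by taking a union of a few random near-perfect matchings), and then split Hall's condition into (a) a deterministic local-expansion argument for small $S$ using the guaranteed minimum degree together with the absence of small dense subgraphs, and (b) a concentration argument for large $S$. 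So the missing idea is not a tighter Chernoff constant but a different treatment of small sets and a construction that guarantees positive minimum degree on the $X_m\cup Y_m$ side.
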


 We can now turn to stating and proving our generalisation of Lemma~\ref{bip-temp}.



\begin{lemma}\label{bip-temp+}
Given any constant $\ga>0$ and  $k,t, C\in \N$ with $k\ge 3$, there exists $\xi>0$ such that the following holds for all sufficiently large $n\in\N$. Let $F$ be a $k$-vertex graph and $G$ be an $n$-vertex graph with a partition $\mathcal{P}= \{V_1, V_2,\ldots,V_C\}$ for some integer $C$ such that for each $i\in[C]$, the following two properties hold:
\begin{enumerate}
  \item [$(1)$] for every vertex $v\in V_i$, there is an $F$-fan  at $v$ in $V_i$ of size at least $\ga n$;
  \item [$(2)$] every $S\in \binom{V_i}{k}$ has at least $\ga n$ vertex-disjoint $(F, t)$-absorbers in $G$.
\end{enumerate}
Then $G$ contains a family $\{A_1, A_2,\ldots, A_C\}$ of vertex-disjoint subsets of size at most $\tfrac\ga C n$ such that by letting $R=\bigcup_{i\in[C]}A_i$, for each $i\in [C]$,  $A_i$ is a $\xi$-absorbing set with respect to $V_i\setminus R$.
\end{lemma}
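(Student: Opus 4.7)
The plan is to construct $A_1,\ldots,A_C$ one at a time, applying a localised version of the template-based construction of \cite[Lemma~2.2]{nenadov20} (which itself uses Montgomery's Lemma~\ref{robust}) at each step. I would take $\xi$ to be a sufficiently small constant depending on $\gamma$, $k$, $t$ and $C$ and aim to produce each $A_i$ of size at most $\tfrac{\gamma n}{4C}$, so that the union $R:=\bigcup_{j\in[C]}A_j$ satisfies $|R|\leq\tfrac{\gamma n}{4}\leq\tfrac{\gamma n}{C}$.

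Suppose $A_1,\ldots,A_{i-1}$ have been constructed vertex-disjointly, and let $R_{i-1}:=\bigcup_{j<i}A_j$, so $|R_{i-1}|\leq\tfrac{\gamma n}{4}$. I would construct $A_i$ inside $G':=G-R_{i-1}$. The first observation is that for $V_i':=V_i\setminus R_{i-1}$, the hypotheses~$(1)$ and~$(2)$ still hold with $\tfrac{\gamma}{2}$ in place of $\gamma$: every $v\in V_i'$ has an $F$-fan at $v$ in $V_i'$ of size at least $\tfrac{\gamma n}{2}$, and every $S\in\binom{V_i'}{k}$ has at least $\tfrac{\gamma n}{2}$ vertex-disjoint $(F,t)$-absorbers in $G'$. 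Indeed, any vertex-disjoint family loses at most one set per deleted vertex, and $|R_{i-1}|\leq\tfrac{\gamma n}{4}$. To build $A_i$ itself, I would adapt the template-based construction of \cite{nenadov20}: set $m=\Theta(\tfrac{\gamma n}{C})$ and choose $\beta>0$ suitably, then invoke Lemma~\ref{robust} to obtain a template $B_i$ with parts $X_i\cup Y_i$ of size $(3+\beta)m$ and $Z_i$ of size $3m$. Using condition~$(1)$ in $V_i'$, I would greedily pick the vertices that play the role of $X_i\cup Y_i$ inside $V_i'$, together with pairwise vertex-disjoint $F$-copies through each of them; using condition~$(2)$ in $V_i'$, I would greedily attach to each edge of $B_i$ a disjoint $(F,t)$-absorber in $G'$. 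All these greedy choices succeed because at every stage only $O(n)$ vertices have been used while each condition provides $\Omega(n)$ options; the maximum degree bound $\Delta(B_i)\leq 40$ keeps the total count manageable, and the resulting $A_i$ has size at most $\tfrac{\gamma n}{4C}$.

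The verification that $A_i$ is a $\xi$-absorbing set with respect to $V_i'$ then follows exactly as in \cite{nenadov20}: given any $U\subseteq V_i'\setminus A_i$ of size $\leq\xi n$ with $k\mid|A_i\cup U|$, one uses the robust matching property of $B_i$ to decide which flex $F$-copies to keep and which absorbers to activate, assembling an $F$-factor of $G[A_i\cup U]$ from these pieces. Since $V_i\setminus R\subseteq V_i'$, this makes $A_i$ a $\xi$-absorbing set with respect to $V_i\setminus R$, as required. The main obstacle is the localisation in the template construction itself: Lemma~\ref{bip-temp} as stated produces an absorbing set for arbitrary subsets of $V(G)\setminus A$, whereas here $A_i$ must absorb only from $V_i'$. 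Condition~$(1)$ is introduced precisely to enable this localisation, guaranteeing that the flex $F$-copies can be placed entirely inside $V_i'$, which is exactly what makes the analysis of \cite{nenadov20} carry over in our partitioned setting; beyond this adaptation, the argument is routine.
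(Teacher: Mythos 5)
Your high-level strategy is the same as the paper's: build the $A_i$'s one at a time, pass at step $i$ to $V_i' := V_i\setminus R_{i-1}$, note that hypotheses $(1)$ and $(2)$ survive this deletion with slightly weakened constants, and run a Montgomery-template construction inside $V_i'$. You also correctly identify that condition~$(1)$ is the mechanism that localises the template and makes each $A_i$ able to absorb from $V_i$ rather than from all of $V(G)$. That diagnosis is right.

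However, the description of how you anchor the template in $V_i'$ misstates exactly the step at which condition~$(1)$ enters, and as written it would not work. You propose to ``greedily pick the vertices that play the role of $X_i\cup Y_i$ inside $V_i'$, together with pairwise vertex-disjoint $F$-copies through each of them.'' But the verification step requires something different in kind: for \emph{every} $v\in V_i'$ --- not only the vertices that you place in $X_i\cup Y_i$, but any vertex that might later appear in the leftover set $U\subseteq V_i'\setminus A_i$ --- there must exist $\Omega(n)$ sets of the fan at $v$ lying entirely inside the flexible part $X$. Those fan sets are the pieces that cover $U$, and they are chosen only \emph{after} $U$ is revealed, not fixed at construction time. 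A greedy choice of $X$ together with fan sets through the vertices of $X\cup Y$ gives no such guarantee for an arbitrary $v$ outside $X$. The paper achieves it by choosing $X$ as a $q$-random subset of $V_i'$ and applying Chernoff plus a union bound over all $v\in V_i'$, showing that with positive probability every vertex retains $\Omega(q^{k-1}\gamma n)$ of its fan sets inside $X$; it then fixes such an $X$ deterministically and takes $Y$ and $Z$ (with $Z$ partitioned into $(k-1)$-sets indexed by $Z_m$) arbitrarily in $V_i'\setminus X$. Without this random (or comparably ``uniformising'') choice of $X$, the step where you ``assemble an $F$-factor of $G[A_i\cup U]$'' breaks down for a general $U$. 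This is the one genuinely missing ingredient; the rest of your outline --- the iteration, the size bookkeeping, attaching $(F,t)$-absorbers to edges of the template, and the use of the robust matching property --- is correct and routine.
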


\begin{proof}
  Let $\ga,k,t, C$  be given and choose $0\ll\xi\ll\beta\ll q\ll \tfrac{\ga}{ktC}$. Furthermore fix $n$ sufficiently large, some  $G,F,\mathcal{P}$  as in the assumptions and for convenience, let $\gamma':=\tfrac\ga C$. We shall iteratively build our family $\{A_1, A_2,\ldots, A_C\}$ of vertex-disjoint subsets one at a time. At each step $s=1,\ldots,C$ we will choose $A_s$ of size at most $\ga' n$ such that $A_s$ is disjoint from $R_{s-1}$ and $A_s$ is a $\xi$-absorbing set with respect to $V_s\setminus R_{s-1}$ where $R_{s-1}=\cup_{1\leq i\leq s-1}A_i$ is the set of vertices already used in absorbing sets.  Note that $|R_{s-1}|\leq \gamma' n (C-1)$ and we have $\gamma n-|R_{s-1}|\geq \gamma' n$. So suppose we are at step $s\ge 1$, let $A_1,A_2,\ldots, A_{s-1}$ be the absorbing sets constructed so far  and $R_{s-1}$ as above (and so  $R_{s-1}=\emptyset$ if $s=1$). Further, let $V_s'=V_s\setminus R_{s-1}$. Now it suffices to find in $G-R_{s-1}$ a $\xi$-absorbing set $A_s$ with respect to $V_s'$  such that $|A_s|\le \ga' n$.   

Note that each vertex of $R_{s-1}$ can be in at most one of the $(F,t)$-absorbers given by $(2)$, as they are vertex disjoint. Hence every subset $S\in \binom {V_s'}{k}$ has at least $\ga n-|R_{s-1}|\ge \ga' n$ vertex-disjoint $(F, t)$-absorbers inside $G-R_{s-1}$ and similarly for every $v\in V_s'$ there is a fan $\cF_v$ at $v$ in $V_s'$ of size at least $\ga'n$. 
Consider a random subset $X$ obtained by independently including each vertex in $V_s'$ with probability $q$.
Since $\mathbb{E}[|X|]= q|V_s'|$ tends to infinity as $n$ increases, it follows from Chernoff's inequality (see e.g. \cite[Theorem 2.1]{janson}) that a.a.s.\   $\tfrac q 2 |V_s'|\leq|X|\leq 2q|V_s'|$. Also, for every $v\in V_s'$, let $f_v$ denote the number of the sets from $\F_v$ that lie inside $X$. Clearly,
  $\mu:=\mathbb{E}[f_v]\geq q^{k-1}|\F_v|\geq q^{k-1}\ga' n$. By a union bound and Chernoff's inequality, we have $ \mathbb{P}\left[\,\text{there is~}\, v\in V_s' \text{~with~} f_v<
      \frac{\mu}{2}\,\right]=o(1)$.
Therefore, as $n$ is sufficiently large, there is some $X\subset V_s'$ such that $ \tfrac{q\ga'}{2}n\leq|X|\leq 2qn$ and such that, for each $v\in V_s'$, there is a subfamily $\F_v'$ of at least
  $\tfrac{q^{k-1}\ga'}{2}n\ge 2\beta n$ sets from $\F_v$ contained in $X$.  This subset $X$ will form part of our absorbing set $A_s$ and will be used to find copies of $F$ containing vertices outside of $A_s$, when proving that $A_s$ is indeed an absorbing set.

Let $m := |X|/(1+\be)$ and note that $m$ is linear in $n$ and so we can assume that $m$ is sufficiently large. Let $B$ be the (auxiliary) bipartite graph  with vertex classes $X_m\cup Y_m$ and $Z_m$, obtained by applying Lemma \ref{robust}.
Arbitrarily choose vertex-disjoint subsets $Y,\,Z\<V_s'\setminus X$ with $|Y|=2m$ and $|Z|=3m(k-1)$, noting that $V_s'\setminus X$ is sufficiently large to do so. Now partition $Z$ arbitrarily into $3m$ subsets $\Z = \{Z_i\}_{i\in[3m]}$, each of size $k-1$, and fix bijections $\phi_1: X_m\cup Y_m \rightarrow X\cup Y$ (which is possible as the sets have the same size due to our definition of $m$) and $\phi_2: Z_m \rightarrow \Z$ such that $\phi_1(X_m) = X$ and $\phi_1(Y_m) = Y$.  

\begin{claim} \label{clm:absorbers}
There exists a family $\{A_e\}_{e\in E(B)}$ of pairwise vertex-disjoint subsets in $V(G')-(R_{s-1}\cup X\cup Y\cup Z)$ such that for every $e = \{w_1,w_2\} \in E(B)$ with $w_1\in X_m\cup Y_m$ and
$w_2\in Z_m$, the set $A_e$ is an $(F,t)$-absorber for $\phi_1(w_1)\cup \phi_2(w_2)$. 
\end{claim}
The idea here is to greedily choose such absorbers one by one for each $e\in E(B)$. Indeed, suppose we have already found appropriate subsets for all the edges from some  $E'\subset E(B)$ with~$E'\neq E(B)$.  Note that $m\leq2qn/(1+\be')$ and $\Delta(B)\leq 40$.  Therefore
    \[|X|+|Y|+|Z|+\bigg|\bigcup_{e\in E'}A_e\bigg| \leq
    4m+3m(k-1)+k^2t|E'|<4km+40k^2t|Z_m| \leq  \tfrac{\ga'}{2}n.\]
Since for each $e=\{w_1,w_2\} \in E(B)\setminus E'$, $S_e:=\phi_1(w_1)\cup \phi_2(w_2)$ is a $k$-set in $V_s'$ with at least $\ga' n$ vertex-disjoint $(F,t)$-absorbers in $G-R_{s-1}$, we can always choose one, say $A_e$, which is disjoint from $R_{s-1}\cup X\cup Y\cup Z\cup\bigcup_{e\in E'}A_e$. This establishes Claim \ref{clm:absorbers}. 

Let $A_s=X\cup Y\cup Z\cup\bigcup_{e\in E(B)}A_e$.  Then $|A_s|\le \ga' n$ and we claim that $A_s$ is a $\xi$-absorbing set with respect to $V_s'$. Indeed, take an arbitrary subset $U\subseteq V_s'\setminus A_s$ such that
$|U|+|A_s|\in k\mathbb{N}$ and $0\leq|U|\leq \xi n$. If there exists $Q\subset X$ with $|Q|=\be m$ and additionally an $F$-factor in $G[Q\cup U]$, then by setting
$X''=X\setminus Q$, which has size $m$, and $X_m'=\phi_1^{-1}(X'')$, Lemma \ref{robust} implies that
there is a perfect matching $M$ in $B$ between $X_m'\cup Y_m$ and $Z_m$. For each edge $e = \{w_1,w_2\}\in M$ take an $F$-factor in $G[\phi_1(w_1)\cup\phi_2(w_2)\cup A_e]$ and for each $e\in E(B)\setminus M$ take an $F$-factor in $G[A_e]$. This process  gives an $F$-factor of $G[A_s\setminus Q]$ and thus together with the $F$-factor in $G[Q\cup U]$ we get an $F$-factor in $G[A_s \cup U]$, as required.  

Hence, it remains to find such a set $Q$ as above. Note first that $(k-1)|U|\leq (k-1)\xi n\leq \be m$ due to our choice of constants.  We claim that $\be m-(k-1)|U|\in k\mathbb{N}$.  Indeed, taking disjoint subsets $Q_1,\,Q_2\subset X $ such that $|Q_1|=\be m-(k-1)|U|$ and $|Q_2|=(k-1)|U|$, we certainly have that
$|A_s|+|U|,\,|Q_2|+|U|\in k \mathbb{N}$ and, by virtue of the existence of
an $F$-factor in $G[A_s\setminus (Q_1 \cup Q_2)]$ (by the argument of the previous paragraph), we have that
$|A_s|-|Q_1|-|Q_2|\in k \mathbb{N}$.  Thus $|Q_1|=\be m-(k-1)|U|\in k\mathbb{N}$. Now we take an arbitrary subset $X'\subseteq X$ with
$|X'|=\left(\be m-(k-1)|U|\right)/k$.  Next we find a subset
$F_v\in \F_v'$ for each $v\in U\cup X'$ such that all these subsets $F_v$ are pairwise vertex-disjoint and do not contain any vertex of $X'$. In fact, we can choose such subsets greedily since $|X'|+(k-1)(|X'|+|U|)=\be m$ and
    $|\F_v'|\geq 
    2\be n> 2\be m$ as shown above.
Since $F_v\in\F_v'$ for every~$v\in U\cup X'$, there is an $F$-factor in~$G[Q\cup U]$ if we set $Q:= (\bigcup_{v\in U\cup X'}F_v)\cup X'$. Also, note that $|Q| =\be m$ as required.
This proves that $A_s$ is a $\xi$-absorbing set with respect to $V_s'$ and as we can find  such an $A_s$ for all $s$ throughout the process, this finishes the proof.
\end{proof}

\subsection{Finding fans} \label{sec:finding fans}

Lemmas \ref{bip-temp} and \ref{bip-temp+} reduce the problem of finding absorbing sets to finding absorbers and in the case of Lemma~\ref{bip-temp+}, large fans. In fact, finding fans is relatively easy due to our independence conditions on the host graph. For cycles and trees, we need the following well-known result due to Gy\'{a}rf\'{a}s, Szemer\'{e}di and Tuza \cite{gyarfas80} and independently, Sumner \cite{sumner81}.
\begin{lemma}\emph{\cite{gyarfas80,sumner81}}\label{universality}
  A $k$-chromatic graph contains every tree on $k$ vertices as a subgraph. Therefore for any graph $G=(V,E)$ and $k$-vertex tree $T_k$, if $U\subset V$ with $|U|> (k-1)\alpha(G)$ then $G[U]$ contains a copy of $T_k$.
\end{lemma}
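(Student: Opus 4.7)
The plan is to prove the two statements in sequence: first the Gyárfás--Sumner assertion that every $k$-chromatic graph contains every tree on $k$ vertices, and then deduce the ``Therefore'' clause by a standard bound on $\chi$ in terms of $\alpha$.

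For the first part, I would begin by passing to a subgraph $G' \subseteq G$ that is minimal with respect to the property $\chi(G') = k$. I claim $\delta(G') \geq k-1$: if some $v \in V(G')$ had $d_{G'}(v) \leq k-2$, then by minimality $G' - v$ admits a proper $(k-1)$-colouring, and at most $k-2$ colours appear on $N_{G'}(v)$, so this colouring extends to $v$, contradicting $\chi(G') = k$. Next I would order the vertices of the tree $T_k$ as $v_1, v_2, \ldots, v_k$ so that for each $i \geq 2$, the vertex $v_i$ has exactly one neighbour $p(i)$ in $\{v_1, \ldots, v_{i-1}\}$; such an ordering exists for any tree (for instance, a BFS order from an arbitrary root). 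Finally, I embed $T_k$ into $G'$ greedily: send $v_1$ to an arbitrary vertex of $G'$, and for $i = 2, \ldots, k$ send $v_i$ to any neighbour of the image of $p(i)$ that has not yet been used. At the moment of embedding $v_i$, the image of $p(i)$ has at least $k-1$ neighbours in $G'$ while at most $i-2 \leq k-2$ of the already-used vertices (namely $\{v_1, \ldots, v_{i-1}\} \setminus \{p(i)\}$) can lie in this neighbourhood, so an unused neighbour always exists.

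For the second part, fix $U \subseteq V$ with $|U| > (k-1)\alpha(G)$ and consider the induced subgraph $H := G[U]$. Since $\alpha(H) \leq \alpha(G)$, the standard inequality $\chi(H) \geq v(H)/\alpha(H)$ (which follows since each colour class in an optimal proper colouring is independent) yields
\[
\chi(H) \;\geq\; \frac{|U|}{\alpha(G)} \;>\; k-1,
\]
so $\chi(H) \geq k$. Applying the first part to $H$ and any $k$-vertex tree $T_k$ produces the required copy of $T_k$ in $G[U]$.

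There is no serious obstacle here: this is a classical argument and the content is entirely elementary. The only point that deserves a moment of care is the degree bookkeeping in the greedy embedding step, where one must verify that $|N_{G'}(f(p(i))) \setminus \{f(v_1), \ldots, f(v_{i-1})\}| \geq 1$ for every $i \leq k$; this reduces to the inequality $(k-1) - (i-2) \geq 1$, which holds precisely because $T_k$ has only $k$ vertices and $G'$ has minimum degree at least $k-1$.
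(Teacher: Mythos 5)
Your proof is correct. The paper does not give its own proof of this lemma --- it cites it as a known result of Gy\'arf\'as--Szemer\'edi--Tuza and Sumner --- and the argument you give (pass to a $k$-vertex-critical subgraph to get minimum degree at least $k-1$, then greedily embed $T_k$ via a leaf-extension ordering, and deduce the second part from $\chi(H)\geq v(H)/\alpha(H)$) is precisely the standard proof of that result; the degree bookkeeping in your embedding step is sound.
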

As a corollary to Lemma \ref{universality}, we get the following which we will use throughout the proof.

\begin{cor}\label{sumner}
  For an integer $k$, an $n$-vertex graph $G$, a vertex $v\in V(G)$ and a vertex subset $U\subset V(G)$, the following holds. If $d_U(v)> (k-1)\alpha(G)$, then for any $k$-vertex $F$ which is a cycle $C_k$ or a tree $T_k$, there is a copy of $F$ containing $v$ and $k-1$ vertices of $U$.
\end{cor}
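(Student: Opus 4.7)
The plan is to apply Lemma \ref{universality} to the induced subgraph $G[W]$, where $W := N_U(v)$, and then attach $v$ at an appropriate place. First I would observe that $|W| = d_U(v) \geq k\alpha(G)$; since every independent set in $G[W]$ is also independent in $G$, we have $\alpha(G[W]) \leq \alpha(G)$, and so $|W| \geq k\alpha(G) > (k-2)\alpha(G[W])$. Lemma \ref{universality}, applied to $G[W]$, therefore yields that $G[W]$ contains a copy of every tree on $k-1$ vertices.

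In the tree case $F = T_k$, I would fix any leaf $u$ of $T_k$ and let $T' := T_k - u$, which is a tree on $k-1$ vertices and so embeds into $G[W]$ by the previous paragraph. Since every vertex of $W$ is a neighbour of $v$ in $G$, letting $v$ play the role of the deleted leaf $u$ (attached to the image in $W$ of the unique neighbour of $u$ in $T_k$) gives the required copy of $T_k$, which uses $v$ together with $k-1$ vertices of $W \subseteq U$.

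For the cycle case $F = C_k$, the same principle applies to the path $P_{k-1}$ on $k-1$ vertices, which is a tree and hence embeds into $G[W]$. The two endpoints of this path lie in $W \subseteq N(v)$, so inserting $v$ between them closes the path into a copy of $C_k$ on $v$ and $k-1$ vertices of $U$, as required.

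There is really no serious obstacle here: the corollary is a short consequence of Lemma \ref{universality}. The only thing worth verifying is the degree slack, and the hypothesis $d_U(v) \geq k\alpha(G) > (k-2)\alpha(G)$ comfortably supplies it whenever $k \geq 3$ and $\alpha(G) \geq 1$ (the degenerate case $\alpha(G)=0$ being vacuous).
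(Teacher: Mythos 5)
Your proof is correct and is exactly the argument the paper treats as ``immediate'' from Lemma \ref{universality}: restrict to $W=N_U(v)$, find a tree on $k-1$ vertices (a leaf-deleted $T_k$, respectively a path $P_{k-1}$) inside $G[W]$, and close up with $v$ using the fact that $v$ is adjacent to all of $W$. The only superfluous step is passing to $\alpha(G[W])\le\alpha(G)$: applying Lemma \ref{universality} directly in $G$ with the set $W$, the hypothesis $|W|\ge k\alpha(G) > (k-2)\alpha(G)$ already suffices, but this is a trivial simplification.
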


To derive Corollary \ref{sumner}  when $F=C_k$, we apply  Lemma \ref{universality} with $T$ being a path on $k-1$ vertices which then forms a copy of $C_k$ with $v$.  The following simple lemma shows that we can get large fans.

\begin{lemma} \label{lem:large fans}
Let $r,k\in \N$ with $k\geq 3$ and a $k$-vertex $F$ be such that one of the following holds:
\begin{enumerate}
 \item [$(1)$] $F=K_k$ and $r=k$ or $r=k-1$;
  \item [$(2)$] $F=T_k$ for some $k$-vertex tree or $F=C_k$ and $r=2$;
\end{enumerate}
Then for any $\rho>0$ there exists  $\alpha>0$ such that the following holds for all $n\in \N$. If $G$ is an $n$-vertex graph with $\alpha_r(G)\leq \alpha n$, $v\in V(G)$ and $U\subset V(G)$ are such that $d_U(v)\geq \rho n$ then there is an $F$-fan at $v$ in $U$ of size at least $\tfrac{\rho}{k}n$.
\end{lemma}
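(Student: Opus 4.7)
The natural approach is a straightforward greedy construction, using the (implicit) hypothesis that $\alpha_r(G)\le\alpha n$ for a suitably small $\alpha$; this must be what the $\alpha$ in the statement refers to, as otherwise $\alpha$ would play no role in the conclusion. My plan is to choose $\alpha:=\rho/k^2$ (or something similar), start from the empty fan, and iteratively extend it one copy of $F$ at a time. Concretely, I would maintain a partial $F$-fan $\cF_v$ at $v$ in $U$ and, whenever $|\cF_v|<\tfrac{\rho}{k}n$, look at the set $W:=(N(v)\cap U)\setminus V(\cF_v)$ of yet-unused neighbours of $v$ in $U$. Since each copy in $\cF_v$ uses $k-1$ vertices from $U\setminus\{v\}$, we have
\[
|W|\ge d_U(v)-|\cF_v|(k-1)>\rho n-\tfrac{\rho}{k}n\cdot(k-1)=\tfrac{\rho}{k}n,
\]
so $|W|$ is strictly larger than both $\alpha n$ and $k\alpha n$ throughout the process.

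The remaining task at each step is to find a set $S\subseteq W$ of size $k-1$ for which $\{v\}\cup S$ spans a copy of $F$, which is then appended to the fan. I would split into cases according to $F$ and $r$. When $F=K_k$ and $r\in\{k-1,k\}$, the hypothesis $\alpha_r(G)\le\alpha n<|W|$ guarantees that $G[W]$ is not $K_r$-free and hence contains a copy of $K_r$, and in particular a $K_{k-1}$; since every vertex of $W$ is adjacent to $v$, appending $v$ yields the desired $K_k$. When $F\in\{T_k,C_k\}$ and $r=2$, we have $d_W(v)=|W|>k\alpha n\ge k\alpha(G)$, so Corollary~\ref{sumner} applied with $W$ playing the role of $U$ immediately produces a copy of $F$ containing $v$ together with $k-1$ vertices of $W$, giving the required $S$.

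Iterating this extension step until $|\cF_v|\ge\tfrac{\rho}{k}n$ produces the required $F$-fan; the disjointness of the sets $S$ and the fact that none of them contain $v$ are guaranteed by construction, since each new $S$ is chosen inside $W\subseteq(V(G)\setminus\{v\})\setminus V(\cF_v)$. I do not anticipate any serious obstacle here: the argument is essentially an elementary degree count combined with the two independence-condition inputs (the $\alpha_r$ bound and Corollary~\ref{sumner}), and the only care needed is to pick $\alpha$ small enough that both embedding steps remain valid at every stage of the greedy process.
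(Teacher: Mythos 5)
Your proof is correct and follows essentially the same approach as the paper: greedily extend (equivalently, take a maximal fan and derive a contradiction) using the degree count $|W|\ge d_U(v)-(k-1)|\cF_v|>\tfrac{\rho}{k}n$, then invoke the $\alpha_r$ bound to find $K_{k-1}\subseteq W$ in case $(1)$ or Corollary~\ref{sumner} in case $(2)$.
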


\begin{proof}
Choosing $0<\alpha\ll \tfrac{\rho}{k}$, this is essentially immediate. Indeed, take a fan $\cF_v$ at $v$ in $U$ of maximum size and suppose that $|\cF_v|<\tfrac{\rho}{k}n$. Then taking $N=(N(v)\cap U)\setminus V(\cF_v)$, we have that $|N|\geq \tfrac{\rho}{k}n$ at which point we can use that $\alpha_{k-1}(G)\leq \alpha n $ in case $(1)$ or Corollary~\ref{sumner} in case $(2)$ to find a further copy of $F$, extending $\cF_v$ and contradicting its maximality.
\end{proof}

\subsection{Finding absorbers}\label{sec:finding absorbers}

In order to apply Lemmas \ref{bip-temp} and \ref{bip-temp+}, what remains is to prove the existence of many absorbers.  In more detail, we need to  find a partition of the vertex set such that each part has the property that every $k$-set in the part has many absorbers.
In order to find this partition,  we adopt the  the latticed-based absorbing method developed in \cite{han16,han17,keevash15}. In this section we illustrate some of the key concepts of the method, introducing the relevant definitions  and stating the key lemmas that we will use to get our desired partition. We defer the proof of the major  lemmas (namely Lemmas \ref{lem:detectpartition} and \ref{lem:closed}) to  Section \ref{sec:lattice}.   


We will use the following notation introduced by Keevash and Mycroft
\cite{keevash15}.
Let $\mathcal{P} = \{V_1,\ldots,V_C\}$ be a partition of $V(G)$. For any subset $S\subset V(G)$, the \emph{index vector} of $S$ with respect to $\mathcal{P}$, denoted by $\textbf{i}_{\mathcal{P}}(S)$, is the vector in $\mathbb{Z}^r$ whose $i$th coordinate is the size of the intersections of S with $V_i$ for each $i\in[C]$. For $j \in [C]$, let $\textbf{u}_j\in \mathbb{Z}^r$ be the $j$th unit vector, i.e. $\textbf{u}_j$ has $j$th coordinate  $1$ and
 all other coordinates $0$. 
 A \emph{transferral} is a vector of the form $\textbf{u}_i-\textbf{u}_j$ for some
distinct $i\neq j\in[C]$. A vector $\textbf{i}\in \mathbb{Z}^r$ is an $s$-vector if all its coordinates are non-negative and their sum equals
$s$. Given $\be > 0$ and a $k$-vertex graph $F$, a $k$-vector $\textbf{v}$ is called $(F,\be)$-\emph{robust} if for any set $W$ of at most $\be n$ vertices, there is a copy of $F$ in $V(G)\setminus W$ whose vertex set has index vector $\textbf{v}$. Let $I^{\be}(\mathcal{P})$ be the set of all $(F,\be)$-robust $k$-vectors and $L^{\be}
(\mathcal{P})$ be the lattice (i.e. the additive subgroup) generated by $I^{\be}(\mathcal{P})$.  

We will also need the notion of $F$-reachability introduced by Lo and Markstr\"{o}m \cite{allan15}. Let 
$m,t$ be positive integers. Then we say that two vertices $u, v\in V(G)$ are $(F, m, t)$-\emph{reachable} (in $G$) if for any set $W\subset V(G)$ of at most $m$ vertices, there is a set $S\subset V(G)\setminus W$ of size at most $kt-1$ such that both $G[S\cup \{u\}]$ and $G[S\cup \{v\}]$ have $F$-factors, where we call such $S$ an $F$-\emph{connector} for $u,v$. Moreover, a set $U\<V(G)$ is $(F,m,t)$-\emph{closed} if every two vertices $u,v $ in $U$ are $(F,m,t)$-reachable. Note that for $U$ to be closed we do \emph{not} require that  the corresponding $F$-\emph{connectors} for $u,v$ are contained $U$. That is, the connectors may contain vertices in $V\setminus U$.    

The following result builds a sufficient condition on a given partition $\{V_1, V_2,\ldots,V_C\}$ to ensure that every $S\in \binom{V(G)}{k}$ of a certain type has linearly many vertex-disjoint absorbers.

\begin{lemma}\label{close}
Given $k, t\in \N$ with $k\ge 3$ and  $\be>0$, the following holds for any $k$-vertex graph $F$ and sufficiently large $n\in \mathbb{N}$. Let $G$ be an $n$-vertex graph with a partition $\mathcal{P}= \{V_1, V_2,\ldots,V_C\}$ for some integer $C\in \N$ such that each $V_i$ is $(F,\be n, t)$-closed with $|V_i|\ge \be n$ for each $i\in [C]$. If $S\in \binom{V(G)}{k}$ such that $\emph{\textbf{i}}_{\mathcal{P}}(S)$ is $(F,\be)$-robust, then $S$ has at least $\tfrac{\be}{k^3t}n$
vertex-disjoint $(F,t)$-absorbers.
\end{lemma}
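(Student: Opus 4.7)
The plan is to build a family of $(F,t)$-absorbers for $S$ greedily, one at a time, each constructed by combining a ``copy'' of $S$ obtained from robustness with one $F$-connector per vertex of $S$ supplied by closedness. Since $\textbf{i}_{\mathcal{P}}(S)$ is $(F,\be)$-robust, for any forbidden set of size at most $\be n$ there is a copy of $F$ on a vertex set $T\subset V(G)$, disjoint from the forbidden set, whose index vector matches that of $S$. This matching of index vectors lets me label $S=\{s_1,\dots,s_k\}$ and $T=\{t_1,\dots,t_k\}$ so that for every $j\in[k]$ the vertices $s_j$ and $t_j$ lie in the same part $V_{\pi(j)}$.

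To construct a single absorber $A_S$, I would first apply robustness with the forbidden set equal to $S$ together with all previously built absorbers, producing such a copy of $F$ on some $T$. Then for each $j\in[k]$ in turn, since $V_{\pi(j)}$ is $(F,\be n,t)$-closed, I would apply reachability to the pair $(s_j,t_j)$, with forbidden set equal to everything chosen so far, to obtain an $F$-connector $C_j$ of size at most $kt-1$, disjoint from $S$, $T$, and the earlier $C_{j'}$. Setting $A_S:=T\cup C_1\cup\dots\cup C_k$ yields a set disjoint from $S$ of size at most $k+k(kt-1)=k^2t$. The required $F$-factor of $G[A_S]$ comes from concatenating, for each $j$, the $F$-factor of $G[C_j\cup\{t_j\}]$ guaranteed by the connector; since the $t_j$'s partition $T$, these factors cover $A_S$ exactly. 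The required $F$-factor of $G[A_S\cup S]$ comes from the copy of $F$ on $T$ together with, for each $j$, the $F$-factor of $G[C_j\cup\{s_j\}]$.

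The iteration hinges only on bounding the forbidden-set size. After $\ell$ absorbers have been built, the total excluded set when starting the next absorber has at most $\ell k^2 t+k$ vertices, and during the construction of that absorber the excluded set grows by at most another $k^2 t$ vertices. Provided $\ell\le\be n/(k^3 t)$ and $n$ is sufficiently large, all these quantities stay well below $\be n$, so robustness and closedness can be applied at every step and the process produces at least $\be n/(k^3 t)$ pairwise vertex-disjoint absorbers. I do not anticipate a substantive obstacle here: the construction is a direct marriage of robustness (which transports the index vector of $S$ somewhere new) with closedness (which swaps within each part), and the only care needed is the routine bookkeeping of the forbidden-set size at each application.
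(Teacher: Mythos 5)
Your proposal is correct and follows essentially the same argument as the paper: use $(F,\be)$-robustness to find a disjoint copy $T$ of $F$ with the same index vector as $S$, then use closedness of each part to connect $s_j$ to $t_j$ one by one, keeping track of the forbidden-set size so that both applications remain valid; the union of $T$ and the connectors is the absorber. The paper phrases the greedy step as a maximality/contradiction argument rather than a direct iteration, but the construction of each individual absorber and the bookkeeping ($|A_S|\le k+k(kt-1)=k^2t$, total forbidden size $\le\tfrac{\be}{k}n+O(1)<\be n$) are identical.
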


\begin{proof}
For positive integers $k, t$ with $k\ge 3$ and  $\be>0$, let $G,F,\mathcal{P}$ and $C$ be given as in the assumption. For any $k$-subset $S\<V(G)$ with $\textbf{i}_{\mathcal{P}}(S)$ being $(F,\be)$-robust, we greedily construct as many pairwise disjoint $(F,t)$-absorbers for $S$ as possible. Let $\mathcal{A}=\{A_1,A_2,\ldots, A_{\ell}\}$ be a maximal family of $(F,t)$-absorbers constructed so far. Suppose to the contrary that $\ell<\tfrac{\be}{k^3t}n$.
Then $|\bigcup_{i=1}^\ell A_i| \le \tfrac{\be}{k} n $ as each such $A_i$ has size at most $k^2t$.   

By $(F,\be)$-robustness, we can pick a copy of $F$ inside $V(G)\setminus(\bigcup_{i=1}^\ell A_i\cup S)$ whose vertex set $T$ has the same index vector as $S$. Let $S=\{s_1,s_2,\ldots,s_k\}$ and $T=\{t_1, t_2,\dots, t_k\}$ such that $s_i$ and $t_i$ belong to the same part of $\mathcal{P}$ for each $i\in [k]$. We now greedily pick up a collection $\{S_1,S_2,\ldots, S_{k}\}$ of vertex disjoint subsets in $V(G)\setminus (\bigcup_{i=1}^\ell A_i\cup S\cup T)$ such that each $S_i$ is an $F$-connector for $s_i,t_i$ with $|S_i|\le kt-1$. Since
\[\left|\bigcup_{i=1}^\ell A_i\cup\left(\bigcup_{i=1}^{k'}S_i\right)\cup S\cup T\right|\le\be n,\]
for any $0\leq k'\leq k$ (using that $n$ is sufficiently large), we can pick each such $S_i$ one by one because $s_i$ and $t_i$ are $(F,\be n,t)$-reachable.
At this point, it is easy to verify that $\bigcup_{i=1}^{k}S_i\cup T$ is actually an $(F,t)$-absorber for $S$, contrary to the maximality of $\ell$.
\end{proof}

To apply Lemma~\ref{close}, we need to find a closed partition in order to guarantee the existence of many absorbers.
 The following crucial result provides such a partition $\mathcal{P}=\{V_1, V_2,\ldots,V_C\}$ of $V(G)$.

\begin{lemma}\label{lem:detectpartition}
Fix $k,r\in \N$ with $k\geq 3$, a $k$-vertex graph $F$, and  $\eta>0$ such that one of the following holds:
\begin{enumerate}
  \item [$(1)$] $F=K_k$, $r=k$ and $\eta>0$;
  \item [$(2)$] $F=K_k$, $r=k-1$ and $\eta\ge\frac{1}{k}$;
  \item [$(3)$] $F=T_k$ for some $k$-vertex tree, $r=2$ and $\eta>0$;
  \item [$(4)$] $F=C_k$, $r=2$ and $\eta\ge\frac{1}{k}$.
\end{enumerate}
Then for any  $D\in \N$ and $\mu>0$ there exists $\alpha,\be_0>0$ and $t_0\in \N$  such that the following holds for all sufficiently large $n\in \N$. If $G$ is an $n$-vertex graph with $\delta(G)\ge (\eta+\mu) n$ and $\alpha_r(G)\le \alpha n$ then there exists constants $\beta>0$ with $\be_0<\be< \mu^2$, and  $t\in \N$ with $t\le t_0$, and a partition $\mathcal{P}= \{V_1, V_2,\ldots,V_C\}$ of $V(G)$ for some integer $C\in \N$   such that  $V_i$ is $(F,\be n, t)$-closed, $|V_i|> \left(\eta+\tfrac{\mu}{2}\right)n$ and $e(V_i,V_j)< \frac{\be}{Dt} n^2$ for all $i\neq j\in [C]$.

\end{lemma}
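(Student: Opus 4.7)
The plan is to follow the lattice-based absorption framework (see~\cite{han17, allan15, keevash15}) in three stages: establish one-step reachability, boost to a closed partition, and then verify the size and sparse-cut conditions.

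Stage~1 (one-step reachability): for each vertex $v \in V(G)$, I would show that the set
\[
\widetilde{N}(v) := \{u \in V(G) \setminus \{v\} : u \text{ and } v \text{ are } (F, \beta_1 n, 1)\text{-reachable in } G\}
\]
has size at least $\rho n$ for some constants $\rho, \beta_1 > 0$ depending on $\eta$ and $\mu$. The core observation is that if $|N(u) \cap N(v)|$ is of linear size then the independence hypothesis yields $\Omega(n)$ vertex-disjoint $(k-1)$-sets $S \subseteq N(u) \cap N(v)$ such that both $S \cup \{u\}$ and $S \cup \{v\}$ span copies of $F$. In cases~(1) and~(2) with $F = K_k$, the connector $S$ is a $K_{k-1}$ in $N(u) \cap N(v)$, which exists in linear quantities by $\alpha_r(G) \leq \alpha n$; in cases~(3) and~(4), Corollary~\ref{sumner} supplies the connector after observing that any $k$-vertex tree or cycle can be realised by attaching a ``pivot'' vertex to a $(k-1)$-vertex substructure common to both $u$ and $v$. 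A standard double-counting argument using $\delta(G) \geq (\eta + \mu)n$ then shows that the number of $u$ with $|N(u) \cap N(v)|$ of linear size is itself linear, giving $|\widetilde{N}(v)| \geq \rho n$.

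Stage~2 (closure to form the partition): given the lower bound on $|\widetilde{N}(v)|$, the standard closure argument originating in~\cite{allan15} says that iterating reachability at most $t_0 = O(1/\rho)$ times produces a partition $\mathcal{P} = \{V_1, \ldots, V_C\}$ with $C \leq 1/\rho$ parts, each of which is $(F, \beta n, t)$-closed for some $t \leq t_0$ and $\beta \in (\beta_0, \mu^2)$. Each closure class has size at least $\rho n$, which bounds $C$ uniformly. Stage~3 (size and sparse-cut bounds): the size bound $|V_i| > (\eta + \mu/2)n$ follows because the Stage~1 argument in fact shows that all but $o(n)$ neighbours of any $v \in V_i$ lie in $\widetilde{N}(v) \subseteq V_i$, giving $|V_i| \geq \delta(G) - o(n)$; a cleanup step that merges any microscopic class into its dominant neighbour maintains this property. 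For the sparse-cut bound, suppose for contradiction $e(V_i, V_j) \geq \tfrac{\beta}{Dt} n^2$ for some $i \neq j$; averaging produces a pair $(u, v) \in V_i \times V_j$ whose common neighbourhood has size at least $\tfrac{\beta}{2Dt} n$, and then the Stage~1 argument applied to this pair yields enough $(k-1)$-set connectors to witness $(F, \beta n, 1)$-reachability of $u$ and $v$, contradicting $i \neq j$.

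The main obstacle is Stage~1 in cases~(2) and~(4), where $\eta$ is close to $1/k$ and Proposition~\ref{prop:lower K_k alpha_k-1} demonstrates that the disjoint-cliques construction genuinely forbids some pairs of vertices from being reachable. In these cases $\widetilde{N}(v)$ may be a proper subset of $V(G)$, and the argument must tolerate $C > 1$. The condition $\eta > 1/k$ is used to guarantee that within any purported ``clique-like'' region of $G$, the minimum degree forces each vertex into a linear-size $F$-fan (via Lemma~\ref{lem:large fans}), which in turn feeds back into Stage~1 to ensure that the closure classes remain large enough and that the total number of classes is at most $\lfloor 1/\eta \rfloor$.
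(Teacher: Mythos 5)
Your Stages 1 and 2 correctly mirror the paper's two auxiliary lemmas: linear reachability of each vertex (Lemma~\ref{reachable}) and the Lo--Markstr\"om-style closure partition (Lemma~\ref{partition}). But your Stage~3, which is where the actual work is, contains a genuine gap.

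First, the averaging claim is false: $e(V_i,V_j)\ge \tfrac{\be}{Dt}n^2$ does \emph{not} give a pair $(u,v)\in V_i\times V_j$ with a large common neighbourhood. Consider two independent sets $V_i,V_j$ joined by a dense regular bipartite graph with no other edges. Then for $u\in V_i$ and $v\in V_j$ every common neighbour would have to lie in $V_j\cap V_i=\emptyset$, so $|N(u)\cap N(v)|=0$ while $e(V_i,V_j)$ is quadratic. Second, even granting reachability of some $u\in V_i$, $v\in V_j$, this would not contradict $i\ne j$: Lemma~\ref{partition} only guarantees that each $V_i$ is internally closed; it does \emph{not} assert that vertices in distinct parts are non-reachable. (Its proof places only the handful of chosen representative vertices pairwise non-reachable, and moreover at parameters different from the ones you are contradicting.) So the ``contradicting $i\neq j$'' step has no basis.

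What is actually missing is the transferral machinery. The paper derives the sparse-cut and size conditions through an iterative merging process: starting from the closure partition of Lemma~\ref{partition}, it repeatedly applies Lemma~\ref{transferral} to merge $V_i,V_j$ whenever there are two robust $k$-vectors $\textbf{s},\textbf{t}$ with $\textbf{s}-\textbf{t}=\textbf{u}_i-\textbf{u}_j$; the existence of such a transferral when $e(V_i,V_j)$ is large is supplied by Lemma~\ref{v1v2} for trees/cycles and by a separate ad hoc argument for cliques (including a two-stage argument in case~(2), $F=K_k$, $r=k-1$, where one must first rule out $\Delta(R)\ge2$ in the auxiliary graph and then produce a $(k-2,2)$-type vector). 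The process terminates, and it is precisely the minimality at termination that yields $e(V_i,V_j)<\tfrac{\be}{Dt}n^2$; the size bound $|V_i|>(\eta+\mu/2)n$ is then read off afterwards. Without replacing your Stage~3 by some version of this transferral-and-merge argument, the proposal does not prove the lemma.
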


We remark that Lemma~\ref{lem:detectpartition} shows much more than just the existence of a closed partition. Indeed, it gives
  the added condition that there are not many edges between parts. This property will be used to guarantee that the vectors $k\textbf{u}_i$ are $(F,\beta)$-robust, which we will also need to apply Lemma~\ref{close}. Moreover, each part in the partition is large i.e.\ of size at least $\left(\eta+\tfrac\mu 2\right)n$. This implies that the number of parts $C$ is less than $\floor{\tfrac{1}{\eta}}$ which is best possible due to the disjoint cliques construction discussed in Section \ref{sec:quasi}.   

Our last crucial lemma suggests that under the constraint of $\alpha_2^*(G)=o(n)$ instead of $\alpha_r(G)=o(n)$, we can  merge all the parts in the partition into only one part, which will be adopted in the proofs of Theorem \ref{thm:stareverything} \ref{RTT*1} and \ref{RTT*3}.

\begin{lemma}\label{lem:closed}
Fix $k\in \N$, a $k$-vertex graph $F$ and  $\eta>0$ such that one of the following holds:
\begin{enumerate}
  \item [$(1^*)$] $F=T_k$ for some $k$-vertex tree and $\eta>0$;
  \item [$(2^*)$] $F=C_k$, $k\geq 4$ and $\eta\geq\frac{1}{k}$.
\end{enumerate}
Then for any $\mu>0$, there exist $\alpha,\beta>0$ and $t\in \N$ such the following holds for sufficiently large $n$. Suppose $G$ is an $n$-vertex graph with $\de(G)\ge (\eta+\mu) n$ and $\alpha_2^*(G)\le \alpha n$. Then $V(G)$ is
 $(F,\be n, t)$-closed.

\end{lemma}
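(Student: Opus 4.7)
The plan is to show that any two vertices $u, v \in V(G)$ are $(F, \beta n, t)$-reachable for some constant $t = t(k)$, by combining a direct-reachability argument through a common neighbour with a transitivity step. The first key ingredient is to exploit the hypothesis $\alpha_2^*(G) \leq \alpha n$ to prove that for every $u \in V(G)$, all but at most $O(\alpha n) + |W|$ vertices of $G$ share a common neighbour with $u$ in $V(G) \setminus W$. Setting $A := N(u) \setminus W$ (of size at least $(\eta + \mu/2)n$), the exceptional set decomposes into three parts: vertices outside $A \cup W \cup \{u\}$ with no neighbour in $A$ (of size less than $\alpha n$, else they would form a bipartite hole of size $\alpha n$ together with $A$); vertices in $A$ that are isolated inside $G[A]$ (of size less than $2\alpha n$, else one could split this edgeless set into two halves to produce a bipartite hole); and the remainder lying in $W$.

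The second step is direct reachability. For $F = T_k$, if $u, v$ share a common neighbour $w \in V(G) \setminus W$, then they are $(T_k, \beta n, 1)$-reachable: fix a leaf $x$ of $T_k$ with unique neighbour $y$, and embed $T_k - x$ greedily into $V(G) \setminus (W \cup \{u, v\})$ with $y$ placed at $w$, extending one vertex at a time using the minimum degree condition. Letting $S$ be the image, we have $|S| = k - 1$ and both $G[\{u\} \cup S]$ and $G[\{v\} \cup S]$ span $T_k$ with $u$, respectively $v$, playing the leaf $x$. For $F = C_k$ with $k \geq 4$ and $\eta \geq 1/k$, direct reachability analogously uses two common neighbours $w_1, w_2$ of $u, v$ joined by a path of length $k - 2$ in $V(G) \setminus (W \cup \{u, v\})$, built using the minimum degree, Corollary~\ref{sumner}, and the bipartite hole condition.

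The third step is transitivity: if $u, w$ are reachable via $S_1$ and $w, v$ via $S_2$, with $S_1, S_2$ disjoint and avoiding $\{u, v, w\}$, then $u, v$ are reachable via $S := S_1 \cup S_2 \cup \{w\}$, since $G[\{u\} \cup S]$ admits the $F$-factor $(\{u\} \cup S_1) \sqcup (\{w\} \cup S_2)$ and $G[\{v\} \cup S]$ admits $(\{w\} \cup S_1) \sqcup (\{v\} \cup S_2)$. Combining everything, for any pair $u, v$ the first step yields that the ``good sets'' $X(u), X(v)$ of vertices sharing a common neighbour with $u$, respectively $v$, each have size at least $n - O(\alpha n) - \beta n$, and hence intersect outside $\{u, v\}$; picking $w$ in the intersection and applying direct reachability to $(u, w)$ and then to $(w, v)$ (slightly enlarging the forbidden set by a constant amount in the second call to keep the connectors disjoint, absorbable into $\beta n$ for large $n$) yields $(F, \beta n, 2)$-reachability by transitivity.

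The main obstacle I anticipate lies in the cycle case: guaranteeing that enough pairs share \emph{two} common neighbours, so that the direct reachability construction applies both in its own right and in the transitivity step, requires a refinement of the first step using the $\eta \geq 1/k$ hypothesis in a delicate way. Indeed, the bipartite hole condition alone bounds the number of pairs with \emph{no} common neighbour but does not immediately bound the number of pairs with exactly one. For small $k$, in particular $k = 4$, the connecting path between the two common neighbours has only one interior vertex, so either $t$ must be pushed beyond $2$ (with an additional transitivity layer via an intermediate vertex) or an alternative direct construction must be found that bypasses the need for two common neighbours.
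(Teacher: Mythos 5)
Your approach is genuinely different from the paper's. The paper derives Lemma~\ref{lem:closed} from the machinery of Lemma~\ref{lem:detectpartition}: it first runs Lemma~\ref{reachable} and the partition lemma (Lemma~\ref{partition}) to produce a closed partition $\cP=\{V_1,\ldots,V_C\}$, and then, assuming $C\ge 2$, uses the bipartite hole hypothesis only to produce an additional index vector $\textbf{u}_1+(k-1)\textbf{u}_2\in I^{2\ze_{s+1}}(\cP)$, forcing a further merge via Lemma~\ref{transferral} and hence $C=1$. You instead try to certify $(F,\be n,t)$-closure of $V(G)$ directly, by bounding the set of ``unreachable'' pairs with $\alpha^*_2$, building connectors through shared neighbours, and closing the gap with a one-step transitivity argument. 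For case $(1^*)$ (trees), your route is viable and, once the standard technical points are fixed (strengthen Step~1 to show all but $O(\alpha n)$ vertices share $\ge k+2$ common neighbours with $u$ outside $W$, so that the second connector can dodge the first), it is actually shorter than the paper's: you never need the lattice/index-vector formalism at all for trees.

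For case $(2^*)$ (cycles), however, the gap you flag at the end is real and is not repaired by the fixes you propose. The core difficulty is that the paper's direct $C_k$-reachability construction (Lemma~\ref{reachable}) connects $u$ to $w$ by embedding a path $P_{k-1}$ entirely inside $N(u)\cap N(w)$, and hence is only available when $u$ and $w$ have \emph{linearly many} common neighbours; the bipartite hole condition guarantees (after your Step~1 strengthening) only a bounded constant number of common neighbours for generic pairs. Your proposed substitute — two common neighbours $z_1,z_2$ plus a $z_1$--$z_2$ path of length $k-2$ built outside $N(u)\cap N(w)$ — needs, when $k=4$, a common neighbour of $z_1$ and $z_2$ avoiding the forbidden set, and with only $O(1)$ choices for $z_1,z_2$ this cannot be forced from $\alpha^*_2(G)=o(n)$ alone (for $k\ge 5$ one can at least bridge $N(z_1)$ to $N(z_2)$ with a bipartite-hole edge, but the $k=4$ case remains). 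Inserting more transitivity layers does not help, since each ``hop'' still requires a pair sharing enough common neighbours to run the direct construction, and $\alpha^*_2$ small does not make the graph on pairs with $\Omega(n)$ common neighbourhoods connected — precisely the obstruction the paper circumvents by reasoning about a closed partition and then using $\alpha^*_2$ only at the index-vector level (via a configuration $a_1$ with two neighbours $u,w$ in $V_2$ sharing many common neighbours, obtained from the small second-neighbourhood independence inside $V_2$, not from common neighbours of $a_1$ and another fixed vertex). To complete your argument for cycles you would, in effect, have to re-derive the closed partition of Lemma~\ref{lem:detectpartition} and then collapse it, which is the paper's proof.
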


Proving Lemmas \ref{lem:detectpartition} and \ref{lem:closed} will be the subject of Section \ref{sec:lattice} after  building the relevant theory.


\subsection{Proof of the existence of absorbing sets}

 We close this section by proving Propositions \ref{prop:Absorbing sets star} and \ref{prop:Absorbing sets} taking Lemmas \ref{lem:detectpartition} and \ref{lem:closed} for granted for now. We begin with Proposition \ref{prop:Absorbing sets star} whose proof will now follow simply by putting together several lemmas. Let us spell out the details.

 \begin{proof}[Proof of Proposition \ref{prop:Absorbing sets star}]
 Fix $k\in \N$, $\eta>0$ and $F$ as in the statement of the proposition (so that $(1^*)$ or $(2^*)$ are satisfied). Furthermore, fix some $\mu, \gamma>0$ and choose $0\ll\alpha\ll \xi\ll \gamma'\ll\tfrac{1}{t},\beta\ll \gamma,\mu, \eta, \tfrac{1}{k}.$ Finally let $n\in \N$ be sufficiently large and fix an $n$-vertex graph $G=(V,E)$ with $\de(G)\ge (\eta+\mu)n$ and $\alpha_2^*(G)\le \alpha n$.

 By Lemma \ref{lem:closed} (and the fact that we choose $\alpha$ and $\beta$ sufficiently small and $t$ sufficiently large), we have that $V$ is $(F,\be n,t)$-closed. Now note that for any set $W$ of at most $\beta n$ vertices, there is a copy of $F$ in $V\setminus W$. Indeed this follows, for example, by Corollary~\ref{sumner}. Therefore, by Lemma~\ref{close} (applied with $C=1$), every $k$-set $S\in \binom{V}{k}$ has at least $\gamma' n$ vertex-disjoint  $(F,t)$-absorbers. Furthermore, applying Lemma~\ref{bip-temp} (with $\gamma'$ playing the role of $\gamma$) we have that $G$ contains a $\xi$-absorbing set of size at most $\gamma'n\leq \gamma n$, concluding the proof.
 \end{proof}

The proof of Proposition~\ref{prop:Absorbing sets} is similar but there is a little more work to do, to deal with atypical vertices. The details follow.

\begin{proof}[Proof of Proposition~\ref{prop:Absorbing sets}]
Fix $k,r\in \N$, a $k$-vertex $F$ and $\eta>0$ such that we fall into one of the cases $(1)-(4)$ as given in the statement of the proposition. Furthermore, let $\gamma,\mu>0$ and choose \[0<\alpha\ll\xi\ll \gamma' \ll \alpha',\beta_0,\tfrac{1}{t_0} \ll \tfrac{1}{D}\ll \epsilon \ll \mu,\gamma,\eta,
\tfrac{1}{k}.\] Finally 
let $n$ be sufficiently large and $G=(V,E)$ be an $n$-vertex graph with $\delta(G)\ge (\eta+\mu)n$ and $\alpha_r(G)\le \alpha n$.   

Now applying Lemma~\ref{lem:detectpartition}, we get some $C,t\in \N$ with $t\le t_0$,  $\be_0<\be<\mu^2$ and  a partition $\cP'=\{V'_1,\ldots,V'_C\}$ of $V$ such
that each $V_i'$ is $(F,\be n, t)$ closed and has  $|V'_i|\geq \left(\eta+\tfrac\mu 2\right)n$. Moreover for all $i\neq j\in [C]$ we have that $e(V_i',V_j')<\tfrac{\be}{Dt}n^2$. Note that the lower bound on the size of the $V'_i$ implies that $C\le \floor{\tfrac{1}{\eta}}$. Indeed if $C\geq \ceil{\tfrac{1}{\eta}}$, then we would have that $G$ has at least $C(\eta+\mu)n>n$ vertices a contradiction. This constant $C$ will remain fixed but we need to adjust the partition $\cP'$ to obtain the desired partition. First though, let us note that  $k\textbf{u}_i$ is $(F,\be)$-robust for every $i\in[C]$, i.e. $k\textbf{u}_i\in I^{\be}(\mathcal{P})$.  Indeed, this can be seen, for example, by Lemma~\ref{lem:large fans}; for any set $W$ of at most $\be n$ vertices, $G[V_i'\setminus W]$ has many edges and, by averaging, a vertex of large degree and therefore contains a copy of $F$. Therefore we can apply Lemma~\ref{close} to conclude that for any $i\in [C]$ and $S\in \binom{V'_i}{k}$, $S$ has a family $\A_S$ of at least $\tfrac{\be}{k^3t}n$ vertex-disjoint $(F,t)$-absorbers.

For each $i\in [C]$, let $Q_i=\bigcup_{j\neq i}V_j$ and $B_i=\{v\in V_i\mid d_{Q_i}(v)\ge \tfrac{\mu}{2} n\}$. Since $|B_i| \tfrac{\mu}{2}n\le e(V_i,Q_i)<\frac{C\be}{Dt} n^2$, we deduce that $|B_i|<\frac{\epsilon^2\beta}{t} n$ due to the fact that $\tfrac{1}{D}\ll \epsilon, \eta,\mu$.
Now for all $v\in \cup_{i\in [C]}B_i$, by Lemma~\ref{lem:large fans}, there is an $F$-fan at $v$ in $G$ of size at least $\tfrac{\eta}{k}n>k\sum_{i\in [C]}|B_i|$. Therefore, we can greedily find a minimal family (of size  of at most $\sum_{i\in [C]}|B_i|$)  of vertex-disjoint copies of $F$ that cover $\bigcup_{i\in[C]} B_i$. Let $\mathcal{B}$ be such an $F$-tiling and define $B=V(\cB)$ to be the vertices that feature in this tiling, noting that $|B|\le \tfrac{\epsilon \beta}{t}n$. Moreover, define  $V_i''=V_i'\setminus B$ for each $i\in[C]$ and $G''=G-B$.   

Now for any $i\in [C]$, and $S\in \binom{V_i''}{k}$, as we had a family $\A_S$ of at least $\tfrac{\beta}{k^3t}n$ vertex-disjoint $(F,t)$-absorbers in $G$ and at most $\tfrac{\epsilon\beta}{t}n$ of these intersect $B$, we have that $S$ has at least $\gamma'n$ vertex-disjoint absorbers in $G'$. Moreover for every $i\in [C]$ and vertex $v\in V_i''$, as $v\notin B$ we have that $d_{V_i''}(v)\ge \left(\eta+\tfrac\mu 2\right)n-|B|\ge \eta n$ and so by Lemma~\ref{lem:large fans} there is an $F$-fan at $v$ in $V_i''$ of size at least $\gamma' n$. Finally then,  applying Lemma~\ref{bip-temp+} in $G'$ we get a family $A_1,A_2,\ldots,A_C$ in $V\setminus B$ of vertex-disjoint subsets such that the following holds.  Taking $R=\cup_{i\in[C]}A_i$ and defining $V_i:=V_i''\setminus R$, we have that $A_i$ is a $\xi$-absorbing set with respect to $V_i$ for each $i\in [C]$. Defining $\cP=\{V_1,\ldots,V_C,A_1,\ldots,A_C,B\}$ it is easy to check that all the desired properties of the proposition hold and hence this concludes the proof.

\end{proof}

\section{Lattice-based absorbing method}\label{sec:lattice}

In this section we prove Lemmas~\ref{lem:detectpartition} and \ref{lem:closed}, thus completing the proof of Propositions~\ref{prop:Absorbing sets star} and \ref{prop:Absorbing sets}. The aim is to provide a partition of the vertex set so that each part is closed and there are few edges between parts. To begin with, in Section ~\ref{sec:closed partition}, we show that we can find some  closed partition into constantly many parts. This partition may not be optimal but it forms a basis for a process of merging parts to obtain the final partition. In Section~\ref{sec:transferrals}, we provide some auxiliary results which will be used for the merging process. In Section~\ref{sec:merge}, we then prove Lemma~\ref{lem:detectpartition} showing how the merging process terminates on a partition with the desired properties. Finally, we will prove Lemma~\ref{lem:closed}, showing that in the setting of $\RTT^*$, the optimal partition must be trivial, that is, simply $\{V(G)\}$.

\subsection{Finding a closed partition} \label{sec:closed partition}

Our first  crucial lemma  gives a closed partition provided every vertex is reachable to linearly many other vertices.

\begin{lemma}[Partition lemma]\label{partition}
For any constant $\de > 0$ and integer $k \ge 2$, there exist $\be>0$ 
and an integer $t>0$
such that the following holds for sufficiently large $n$. Let $G$ be an $n$-vertex graph and $F$ be a $k$-vertex graph. If every vertex in $V(G)$ is $(F,\de n, 1)$-reachable to at least $\de n$ other vertices, then there is a partition $\mathcal{P}= \{V_1, V_2,\ldots,V_C\}$ of $V(G)$ for some integer $C\le \ceil{\tfrac{1}{\de}}$ such that for each $i\in[C]$, $V_i$ is $(F,\be n, t)$-closed and $|V_i|\ge  \tfrac{\de }{2}n$.
\end{lemma}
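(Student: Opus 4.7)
The plan is to take as the partition the connected components of the auxiliary \emph{reachability graph} $R$ on $V(G)$, in which $uv\in E(R)$ iff $u$ and $v$ are $(F,\delta n,1)$-reachable in $G$. By hypothesis $\delta(R)\geq \delta n$, so each connected component of $R$ contains at least $\delta n+1$ vertices; letting $V_1,\ldots,V_C$ denote these components we immediately get $C\leq n/(\delta n+1)\leq \lceil 1/\delta\rceil$ and $|V_i|\geq \delta n+1\geq \delta n/2$, matching the size and cardinality requirements of the lemma.

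The first step is to verify that $F$-reachability composes along paths in $R$: if $u=u_0,u_1,\ldots,u_d=v$ is a path in $R$ whose intermediate vertices avoid a given set $W$ with $|W|\leq \delta n-dk$, then there is an $F$-connector $S$ for $(u,v)$ of size at most $dk-1$ avoiding $W$. One builds $S$ greedily: for $i=1,\ldots,d$ choose an $F$-connector $S_i$ of size at most $k-1$ for the pair $(u_{i-1},u_i)$ avoiding $W\cup\{u_0,\ldots,u_d\}\cup\bigcup_{j<i}S_j$, which has total size at most $\delta n$, so the 1-step reachability assumption supplies such $S_i$. Then $S:=\{u_1,\ldots,u_{d-1}\}\cup\bigcup_i S_i$ works, since the sets $S_i\cup\{u_{i-1}\}$ (respectively $S_i\cup\{u_i\}$) are vertex-disjoint copies of $F$ that partition $S\cup\{u\}$ (respectively $S\cup\{v\}$). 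This reduces $(F,\beta n,t)$-closedness of each component $V_j$ to showing that $R[V_j]$ has bounded diameter even after deleting a small set of vertices.

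The main step---and the principal obstacle---is this bounded-diameter claim. Fix a component $V_j$; since every vertex of $V_j$ has all of its $\geq \delta n$ $R$-neighbours inside $V_j$, the induced subgraph $R[V_j]$ has relative minimum degree at least $\delta n/|V_j|\geq \delta$ (using $|V_j|\leq n$). A standard BFS layering argument caps the diameter: performing a BFS from any vertex gives layers $L_0,\ldots,L_D$, and by the BFS property any $u\in L_i$ (for $1\leq i\leq D-1$) has all its neighbours in $L_{i-1}\cup L_i\cup L_{i+1}$, forcing $|L_{i-1}|+|L_i|+|L_{i+1}|\geq \delta n+1$. Summing over disjoint triples of consecutive layers and using $\sum_{i=0}^D|L_i|=|V_j|\leq n$ yields $D\leq 3/\delta+O(1)$. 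The same argument applied to $R[V_j\setminus W']$ for any $W'$ with $|W'|\leq \beta n$ (where $\beta$ is a sufficiently small constant depending only on $\delta$ and $k$) still gives diameter at most $t:=\lceil 4/\delta\rceil$, provided $n$ is sufficiently large. Combined with the composition step, this shows that any two vertices in $V_j$ are $(F,\beta n,t)$-reachable, taking for instance $\beta:=\delta/4$, completing the proof.
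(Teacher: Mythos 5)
Your proposal diverges from the paper's route and has a genuine gap. Taking the connected components of the reachability graph $R$ and arguing via path concatenation requires not merely that $R[V_j]$ have bounded diameter, but that $R[V_j\setminus W']$ remain \emph{connected} with bounded diameter for every $W'$ of size at most $\beta n$. That robust-connectivity claim does not follow from the minimum-degree hypothesis: minimum degree bounds the diameter of each component of $R[V_j\setminus W']$, but says nothing about vertex connectivity, so $W'$ may simply disconnect $u$ from $v$ inside $R[V_j]$ and your BFS argument cannot even get started. Concretely, take $F=K_2$ and let $G$ consist of two disjoint cliques $A,B$ of size slightly more than $\delta n$ together with one universal vertex $c$; two vertices are $(K_2,\delta n,1)$-reachable precisely when they have more than $\delta n$ common neighbours, so $R$ has a single component (joined through $c$). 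But this component is not $(F,\beta n,t)$-closed for any $\beta,t$: for $u\in A$, $v\in B$ and $W=\{c\}$, any $S$ avoiding $c$ lies in $A\cup B$, and a parity count on $|S\cap A|$ and $|S\cap B|$ shows that $G[S\cup\{u\}]$ and $G[S\cup\{v\}]$ cannot both have perfect matchings. The lemma's partition must therefore split this $R$-component; your construction never does.

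The paper sidesteps the connectivity issue by never committing to a single reachability graph. It fixes a decreasing hierarchy of thresholds $\lambda_1\gg\lambda_2\gg\cdots$, lets $d$ be the largest number of vertices that are pairwise $(F,\lambda_d n,t_d)$-unreachable, takes such representatives $v_1,\ldots,v_d$, and partitions $V(G)$ according to which $v_i$ each vertex is reachable to, reassigning the small overlap set $U_0$ at the end. The concatenation step (your composition observation) is used there too, but only a bounded number of times, and each use degrades the $m$-parameter by a controlled amount; the hierarchy $\lambda_1\gg\cdots\gg\lambda_{r_0+1}$ is precisely what absorbs this degradation. To repair your argument you would need a substitute for the unsupported robust-connectivity step, and a representative-based decomposition such as the paper's is one way to do that; it is essential rather than cosmetic.
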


We shall make use of the following subtle observation in the proof of Lemma \ref{partition}.
\begin{fact}\label{fact:concatenate}
Let $G$ be an $n$-vertex graph  and let $F$ be a graph of $k$ vertices.
For two vertices $u, w\in V(G)$, if there exist $m_1$ vertices $v\in V(G)$ which are $(F, m_2, t)$-reachable to both $u$ and $w$, respectively, then $u$ and $w$ are $(F, m, 2t)$-reachable, where $m= \min\{m_1-1, m_2-kt\}$.
\end{fact}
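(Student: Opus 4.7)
The plan is to chain together two $(F,t)$-connectors through an intermediate vertex $v$ chosen from the $m_1$ vertices simultaneously reachable to $u$ and to $w$. Fix a forbidden set $W \subset V(G)$ with $|W| \le m$. Since $m \le m_1 - 1$, I can select a vertex $v \notin W$ (distinct from $u$ and $w$) which is $(F, m_2, t)$-reachable to both $u$ and $w$. Next, I would apply the $(F,m_2,t)$-reachability of $v$ and $u$ with forbidden set $W \cup \{w\}$, of size at most $m+1 \le m_2$, to produce an $F$-connector $S_1$ of size at most $kt-1$, disjoint from $W \cup \{w\}$, such that $G[S_1 \cup \{u\}]$ and $G[S_1 \cup \{v\}]$ both admit $F$-factors. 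I would then apply the $(F,m_2,t)$-reachability of $v$ and $w$ with forbidden set $W \cup S_1 \cup \{u\}$, whose size is at most $m + kt \le m_2$, obtaining a second $F$-connector $S_2$ of size at most $kt-1$, disjoint from this forbidden set, with $F$-factors in $G[S_2 \cup \{v\}]$ and $G[S_2 \cup \{w\}]$.

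The candidate connector for $u$ and $w$ is $S := S_1 \cup S_2 \cup \{v\}$, of size at most $2kt-1$ and disjoint from $W$. By our choices of forbidden sets, the two decompositions
\[ S \cup \{u\} = (S_1 \cup \{u\}) \,\cup\, (S_2 \cup \{v\}), \qquad S \cup \{w\} = (S_1 \cup \{v\}) \,\cup\, (S_2 \cup \{w\}) \]
are disjoint unions. Concatenating the $F$-factors guaranteed on each piece produces $F$-factors in $G[S \cup \{u\}]$ and $G[S \cup \{w\}]$ respectively, which establishes $(F,m,2t)$-reachability of $u$ and $w$.

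The only delicate point — and the reason the bound $m = \min\{m_1-1,\, m_2-kt\}$ takes exactly this shape — is the bookkeeping of forbidden sets: $m_1-1$ gives room to choose $v$ outside $W$, while $m_2-kt$ leaves slack to absorb $S_1$ and $\{u\}$ into the forbidden set for the second application. Once this is set up, the disjointness needed for the two union decompositions, and hence the factor concatenation, follows automatically. I do not anticipate any substantive obstacle beyond this size accounting.
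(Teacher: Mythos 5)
Your chaining argument is correct and is exactly the argument the bound $m=\min\{m_1-1,\,m_2-kt\}$ is designed to support: choose an intermediate $v$ outside $W$ (possible since $|W|\le m_1-1$), connect $u$ to $v$ with $W\cup\{w\}$ forbidden, then $v$ to $w$ with $W\cup S_1\cup\{u\}$ forbidden (of size at most $m+kt\le m_2$), and glue the two pairs of $F$-factors along $v$. The paper states this as a Fact without proof, and your write-up supplies precisely the intended bookkeeping; the only pedantic point is that picking $v\notin W\cup\{u,w\}$ technically needs the $m_1$ candidate vertices to avoid $u,w$ (which is implicit in the definition and in how the fact is applied), so there is no substantive gap.
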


\begin{proof}[Proof of Lemma \ref{partition}]
Let $r_0 = \lceil1/\delta\rceil+1$ and  choose constants $0<\beta_{r_0+1}\ll \beta_{r_0}\ll\beta_{r_0-1} \ll \cdots \ll \beta_1=\de$. Furthermore, fix $\beta=\beta_{r_0+1}$ and $t=2^{r_0}$.
We can assume that there are two vertices that are not $(F, \beta_{r_0} n, 2^{r_0-1})$-reachable, as otherwise we just output
$\mathcal P=\{V(G)\}$ as the desired partition. 
Observe  also that every set of $r_0$ vertices must contain two vertices that are $(F, \beta_2n, 2)$-reachable to each other.
Indeed,
fixing some arbitrary set of vertices $S=\{v_1,\ldots,v_{r_0}\}\subset V(G)$,
by the Inclusion-Exclusion principle we have that there exists a pair $i\neq j\in [r_0]$ so that  both $v_i$ and $v_j$ are 
both $(F, \beta_1 n, 1)$-reachable to a set of at least $\delta' n$  vertices for some $\delta'\ge \tfrac{\de}{\binom{r_0}{2}}$.
Thus, by Fact~\ref{fact:concatenate}, we deduce that $v_i$ and $v_j$ are $(F, \beta_2 n, 2)$-reachable for as  $\beta_2\ll \beta_1=\delta$.  

To ease the notation, we write $\lambda_i=\beta_{r_0+2-i}$ and $t_i = 2^{r_0+1-i}$ for each $i\in [1,r_0+1]$. Therefore \[0<\lambda_1\ll \lambda_2\ll \lambda_3\cdots\ll\lambda_{r_0+1}=\be_1.\] Let $d$ be the largest integer with $2\le d \le r_0$  such that there are $d$ vertices $v_1,\dots, v_d$ in $G$ which are pairwise \emph{not} $(F, \lambda_d n, t_d)$-reachable. Note that $d$ exists and $2\le d\le r_0-1$. Indeed, we assumed above that there are  $2$ vertices that are not $(F,\lambda_2n,t_2)$-reachable and  if $d= r_0$, then there are $r_0$ vertices in $G$ which are pairwise not $(F, \beta_{2} n, 2)$-reachable, contrary to the observation above. Let $S=\{v_1, \dots, v_d\}$ be such a set of vertices and note that  we know that $v_1, \dots, v_d$ are also pairwise not $(F, \lambda_{d+1} n, t_{d+1})$-reachable, as $\lambda_{d+1}\leq \lambda_d$ and $t_{d+1}\geq t_d$.  

For a vertex $v$ we write $\tilde{N}_i(v)$ for the set
of vertices which are $(F, \lambda_i n, t_i)$-reachable to $v$. Consider $\tilde{N}_{d+1}(v_i)$ for $i\in [d]$.
Then we conclude that
\begin{description}
  \item[(i)] any vertex $v\in V(G) \setminus \{ v_1, \dots, v_d\}$ must be
in $\tilde{N}_{d+1}(v_i)$ for some $i\in [d]$.
Otherwise, $v, v_1, \dots, v_d$ are pairwise not $(F, \lambda_{d+1} n, t_{d+1})$-reachable, contradicting the maximality of $d$.
  \item[(ii)] $| \tilde{N}_{d+1}(v_i) \cap \tilde{N}_{d+1}(v_j)| \le \lambda_d n+1$ for any $i\not = j$. Otherwise Fact \ref{fact:concatenate} implies that $v_i, v_j$ are $(F, \lambda_d n, t_d)$-reachable to each other, a contradiction (using that  $\lambda_d\ll\lambda_{d+1}$ here).
\end{description}

For $i\in [d]$, let \[U_i = (\tilde{N}_{d+1}(v_i)\cup \{ v_i\})
\setminus \bigcup_{ j\not = i} \tilde{N}_{d+1}(v_j).\]
Then we claim that each $U_i$ is $(F, \lambda_{d+1}n, t_{d+1})$-closed.
Indeed otherwise, there exist $u_1, u_2\in U_i$ that are not
$(F, \lambda_{d+1}n, t_{d+1})$-reachable to each other. Then
$\{ u_1, u_2\} \cup \{ v_1, \dots, v_d\} \setminus \{v_i\}$
contradicts the maximality of $d$.  

Let $U_0 = V(G) \setminus (U_1\cup\dots\cup U_d)$.
We have $|U_0| \le d^2\lambda_d n$ due to {\textbf{(i)}} and  {\textbf{(ii)}} above. In order to obtain the desired reachability partition,
we now drop each vertex of $U_0$ back into some $U_i$ for $i\in [d]$ as follows. Since each $v\in U_0$ is $(F, \lambda_{r_0+1}n, t_{r_0+1})$-reachable to at least $\de n$ vertices by assumption, it holds due to the fact that  $\lambda_d\ll \de$, that
$|\tilde{N}_{r_0+1}(v)\setminus U_0| \ge \delta n - |U_0|\ge \delta n - d^2\lambda_d n > d\lambda_d n.$
Therefore there exists some $i \in [d]$ such that $v$ is $(F, \lambda_{r_0+1}n, t_{r_0+1})$-reachable to at least
$\lambda_d n+1$ vertices in $U_i$ and hence $(F, \lambda_{d+1}n, t_{d+1})$-reachable to all these vertices. Fact \ref{fact:concatenate} (using that $\lambda_d\ll\lambda_{d+1}$) implies  that  $v$ is $(F, \lambda_d n, t_d)$-reachable to every vertex in $U_i$.
Now partition $U_0$ as  $U_0=\cup_{i\in d}R_i$ where for each $i\in[d]$,  $R_i$ denotes a set of  vertices $v\in U_0$ that are $(F, \lambda_d n, t_d)$-reachable to every vertex in $U_i$. Again by Fact \ref{fact:concatenate}, for every $i\in [d]$,  every two vertices in $R_i$ are $(F, \lambda_{d-1} n, t_{d-1})$-reachable to each other. Let $\mathcal{P}=\{V_1, \dots, V_d\}$ be the resulting partition by setting $V_i=R_i\cup U_i$. Then each $V_i$ is $(F, \lambda_{d-1} n, t_{d-1})$-closed.
Also, for each $i\in[d]$, it holds that \[|V_i| \ge |U_i| \ge |\tilde{N}_{d+1}(v_i)| - d^2 \lambda_d n
\ge |\tilde{N}_{r_0+1}(v_i)| - \tfrac{\de}{2} n
\ge  \tfrac{\de}{2}n.\]
This completes the proof by noting that  $\be=\beta_{r_0+1}\le\lambda_{d-1}$ and $t=2^{r_0}\ge t_{d-1}$.
\end{proof}

Lemma~\ref{partition} gives us a closed partition if every vertex is reachable to a linear number of other vertices. It will be used in conjunction with the following lemma which shows that in our settings, this is indeed the case.

\begin{lemma}\label{reachable}
Fix $k,r\in \N$ with $k\geq 3$ and a $k$-vertex graph $F$ such that one of the following holds:
\begin{enumerate}
  \item [$(1)$] $F=K_k$ and $r=k-1$;
\item [$(2)$] $F=T_k$ for some $k$-vertex tree or $F=C_k$ and $r=2$.
\end{enumerate}
Then for any $\mu>0$ there exists $\alpha>0$  
such that the following holds for sufficiently large $n$. Let  $G$ be an $n$-vertex graph with $\de(G)\ge \mu n$ and $\alpha_r(G)\le \alpha n$.
Then every vertex in $G$ is $(F, \be n, 1)$-reachable to at least $\tfrac{\mu}{4} n$ other vertices, where $\be=\tfrac{\mu^2}{8}$.
\end{lemma}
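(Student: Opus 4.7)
The plan is to fix an arbitrary vertex $u\in V(G)$ and, by double-counting common neighbourhoods, produce a linear fraction of vertices $v$ which are $(F,\be n,1)$-reachable to $u$. I shall implicitly use the hypothesis $\alpha_r(G)\le\alpha n$ with $\alpha\ll\mu$, which is needed to construct the connectors and without which the conclusion fails (for example, on a disjoint union of cliques of size $\mu n$).

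The first step reduces reachability to a common-neighbourhood threshold. Suppose $v$ satisfies $|N(u)\cap N(v)|\ge \be n+k\alpha n+k$. Then for any bad set $W$ with $|W|\le \be n$ the leftover $U:=(N(u)\cap N(v))\setminus W$ has $|U|\ge k\alpha n$. In case~(1), the hypothesis $\alpha_{k-1}(G)\le\alpha n$ produces a $K_{k-1}$ on some $S\subset U\subset N(u)\cap N(v)$, whence both $\{u\}\cup S$ and $\{v\}\cup S$ span $K_k$, so $S$ is a valid connector. In case~(2), $u$ has at least $k\alpha n\ge k\alpha(G)$ neighbours in $U$, so Corollary~\ref{sumner} yields a copy of $F$ containing $u$ together with $k-1$ further vertices $S\subset U$. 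Since all of $u$'s neighbours in this copy of $F$ lie in $S\subset N(v)$, we can replace $u$ by $v$ to obtain a copy of $F$ on $\{v\}\cup S$, so $S$ is again a valid connector. Either way, any such $v$ is $(F,\be n,1)$-reachable to $u$.

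The second step counts such $v$'s by the standard length-two walk double-count:
\[
\sum_{v\in V(G)}|N(u)\cap N(v)|=\sum_{w\in N(u)}d(w)\ge|N(u)|\cdot\mu n\ge \mu^2 n^2.
\]
Setting $T:=\be n+k\alpha n+k=\Theta(\mu^2 n)$ and $R_u:=\{v\neq u:|N(u)\cap N(v)|\ge T\}$, splitting the sum according to membership in $R_u$ and using $|N(u)\cap N(v)|\le n$ on the reachable side and $<T$ on the other gives
\[
|R_u|\cdot n+(n-|R_u|)\cdot T\ge \mu^2 n^2-n,
\]
which rearranges to a linear lower bound on $|R_u|$ provided $\alpha\ll\mu$. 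Combined with the first step, this establishes the conclusion.

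The main technical obstacle is reconciling the above bound with the exact constant $\mu/4$ claimed in the statement: the naive averaging only yields $|R_u|\ge \tfrac{7}{8}\mu^2 n-o(n)$, which is weaker than $\tfrac{\mu}{4}n$ when $\mu<\tfrac{2}{7}$. Tightening the constant likely requires a more delicate use of the $\alpha_r$ hypothesis to rule out configurations with many $v$ of small common neighbourhood with $u$ (extremal graphs such as disjoint unions of cliques actually achieve $|R_u|\approx \mu n$, much larger than the averaging bound). In any case, the weaker $\Omega(\mu^2 n)$ bound above is linear in $n$ and thus suffices for the only downstream use of this lemma in Lemma~\ref{partition}, where $\delta$ may be chosen of order $\mu^2$.
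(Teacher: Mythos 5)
Your reduction of reachability to a common-neighbourhood threshold is correct (and your observation that the lemma as stated silently assumes $\alpha_r(G)\le\alpha n$ is a fair criticism of the paper's phrasing — the paper's own proof does invoke it). Your handling of case~(2) by replacing $u$ with $v$ in a copy of $F$ found via Corollary~\ref{sumner} is a mild variant of the paper's argument and is fine. The double-counting step is also the paper's starting point: both you and the paper compute $\sum_{v}|N(u)\cap N(v)|=\sum_{w\in N(u)}d(w)\ge|N(u)|\,\delta(G)$.

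The gap you flag is real, and the reason you cannot close it is a suboptimal choice in the upper bound, not a fundamentally different obstacle. On the side where $|N(u)\cap N(v)|$ is large, you bound each term by $n$, giving $|R_u|\cdot n$. The paper instead bounds each such term by $|N(u)|$ (trivially $|N(u)\cap N(v)|\le|N(u)|$), so the inequality reads
\[
|N(u)|\,\delta(G)\;\le\;|\Gamma_{\be}(u)|\cdot|N(u)|+2\be n^2,
\]
and dividing through by $|N(u)|\ge\mu n$ yields
\[
|\Gamma_{\be}(u)|\;\ge\;\delta(G)-\frac{2\be n^2}{|N(u)|}\;\ge\;\Bigl(\mu-\tfrac{2\be}{\mu}\Bigr)n\;\ge\;\tfrac{\mu}{4}n
\]
with $\be=\mu^2/8$. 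The division by $|N(u)|$ is exactly what upgrades your $\Omega(\mu^2 n)$ to the claimed $\Omega(\mu n)$. Your fallback remark — that an $\Omega(\mu^2 n)$ reachability bound would still feed into Lemma~\ref{partition} with a worse constant — is plausible, but it proves a weaker statement than the lemma and would require re-threading constants through the merging process in Section~\ref{sec:merge}; the one-line fix above is simpler and proves the lemma as stated.
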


\begin{proof}
Choose $0<\alpha\ll \mu,\tfrac{1}{k}$. 
For any vertex $u\in V(G)$, we write $\Gamma_{\be}(u)$ for the set of vertices $v$ such that $|N(u,v)|\ge 2\be n$. Then by double counting $e(N(u), V(G))$, we have \[|N(u)|\de(G)\le e(N(u), V(G))\le 2(|\Gamma_{\be}(u)||N(u)|+2\be n^2). \] Thus $|\Gamma_{\be}(u)|\ge \frac{\de(G)}{2}-\frac{2\be n^2}{|N(u)|}\ge \left(\frac{\mu}{2}-\frac{2\be }{\mu}\right)n\ge \frac{\mu}{4}n.$  

Now it suffices to show that $u$ is $(F, \be n, 1)$-reachable to every vertex $v$ in $\Gamma_{\be}(u)$ for both cases $(1)$ and $(2)$. Indeed, in all cases, for every such vertex $v\in \Gamma_{\be}(u)$ and an arbitrary set $W$ of at most $\be n$ vertices , we have $|N(u,v)\setminus W|\ge \be n> k\alpha n$. Thus in case $(1)$, by the assumption $\alpha_{k-1}(G)\le \alpha n$, we can find a $K_{k-1}$ in $N(u,v)\setminus W$, which induces a $K_k$-connector for $u,v$. In case $(2)$, Lemma \ref{universality} immediately gives a copy of  $T_{k-1}$ inside $N(u,v)\setminus W$ for any $k-1$ vertex tree $T_{k-1}$. Choosing $T_{k-1}$ by removing one vertex from $F$, we have that this copy of $T_{k-1}$ serves as an  $F$-connector for $u,v$. This completes the proof.
\end{proof}

\subsection{Transferrals} \label{sec:transferrals}

As in Lemma~\ref{lem:detectpartition}, we need a partition $\mathcal{P}=\{V_1, V_2,\ldots,V_C\}$ such that there are few edges between parts and consequently, appealing to Lemma~\ref{close},  every $S\in\binom{V_i}{k}$ has linearly many vertex-disjoint $(F,t)$-absorbers. Note that the current partition obtained from Lemma \ref{partition} might not
have this property. We proceed with a sequence of merging processes, which  follows the strategy of transferrals in \cite{han17}.  The following lemma builds a sufficient condition that allows us to merge two distinct parts into a closed one, recalling the definition of $I^{\be}(\mathcal{P})$ from Section \ref{sec:finding absorbers}.





\begin{lemma}[Transferral]\label{transferral}
Given any positive integers $k, t\in \mathbb{N}$  with $k\ge 3$ and  constant $\be>0$, the following holds for sufficiently large $n$. Let $F$ be a $k$-vertex graph and $G$ be an $n$-vertex graph with a partition $\mathcal{P}=\{V_1, V_2,\ldots,V_C\}$ of $V(G)$ such that each $V_i$ is $(F,\be n, t)$-closed. For distinct $i, j \in[C]$, if there exist two $k$-vectors $\textbf{s},\textbf{t}\in I^{\be}(\mathcal{P})$ such that $\textbf{s}-\textbf{t}=\textbf{u}_i-\textbf{u}_j$, then $V_i\cup V_j$ is $\left(F,\tfrac{\be}{2}n, 2kt\right)$-closed.
\end{lemma}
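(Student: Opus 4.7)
The only nontrivial case is $u\in V_i$ and $v\in V_j$ with $i\neq j$; same-part pairs are handled directly by the $(F,\beta n,t)$-closedness hypothesis together with the observation that $\beta n/2 \leq \beta n$. So fix such a pair $u,v$ together with an arbitrary $W\subset V(G)$ with $|W|\leq \beta n/2$.

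The plan is to combine four ingredients into a connector $S$ for $u,v$ of size at most $2k^2t-1$: two ``bridging'' copies of $F$ given by the transferral vectors, and two within-part connectors coming from the closedness of $V_i$ and $V_j$. Concretely, the $(F,\beta)$-robustness of $\textbf{s}$ produces an $F$-copy $F_s$ with index vector $\textbf{s}$ inside $V(G)\setminus(W\cup\{u,v\})$ (noting that $|W\cup\{u,v\}|\leq \beta n$), and since $s_i = t_i+1\geq 1$ we may pick $u_0\in V(F_s)\cap V_i$; similarly, using the robustness of $\textbf{t}$ we obtain a vertex-disjoint $F$-copy $F_t$ of index $\textbf{t}$ with a marked vertex $v_0\in V(F_t)\cap V_j$. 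The $(F,\beta n,t)$-closedness of $V_i$ then yields a connector $S_1$ for the pair $u,u_0$, and of $V_j$ a connector $S_2$ for the pair $v,v_0$, each of size at most $kt-1$ and disjoint from all previous selections; at each step the ``forbidden'' set remains of size at most $\beta n$, so the invoked guarantees apply. We take $S$ to be a suitable combination of $V(F_s)$, $V(F_t)$, $S_1$, $S_2$, and the distinguished vertices $u_0, v_0$; a crude size estimate gives at most $2k+2(kt-1)\leq 2k^2t-1$, well within the $(F,\cdot,2kt)$-budget.

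The heart of the proof is to verify that both $G[S\cup\{u\}]$ and $G[S\cup\{v\}]$ admit $F$-factors simultaneously. The intended decomposition in the $u$-direction uses $S_1\cup\{u\}$ as a partial factor (via the connector property for $u,u_0$), $F_s$ and $F_t$ as two $F$-copies, and $S_2\cup\{v_0\}$ as the remaining factor (via the connector property for $v,v_0$); the $v$-direction uses the symmetric decomposition with $S_2\cup\{v\}$ and $S_1\cup\{u_0\}$, with the roles of $\textbf{s}$ and $\textbf{t}$ swapped. Morally, the transferral identity $\textbf{s}-\textbf{t}=\textbf{u}_i-\textbf{u}_j$ is exactly what records that moving a single vertex from $V_i$ to $V_j$ -- the only change between $u$ and $v$ -- can be absorbed by swapping the role of an $F_s$-copy for an $F_t$-copy inside the factor.

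The main obstacle, and technical crux of the argument, is that naively swapping $u_0$ with $v_0$ inside the rigid copies $F_s$ and $F_t$ does not generally yield $F$-copies of $G$ even though the index vectors match, and the parity $|S|\equiv -1\pmod k$ required by the $F$-factor condition on $S\cup\{u\}$ is delicate to arrange with only $F_s, F_t, S_1, S_2$ at hand (the four pieces together give $|S|\equiv -2\pmod k$). The resolution is to build $S$ with a short parity-adjusting ``buffer'' -- a single extra vertex (for example a fresh vertex of $V_j$ used as the second endpoint of the $S_2$-connector and not contained in $F_t$) -- so that the $F$-factor arithmetic in both directions can be aligned, and then to verify disjointness and coverage using that $V(F_s), V(F_t), S_1, S_2$ and the marked vertices are pairwise disjoint and avoid $u,v$. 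Once the correct combinatorial accounting is pinned down, the rest of the verification reduces to assembling the $F$-copies in the order indicated above.
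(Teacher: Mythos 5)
Your scaffolding (two bridging copies $F_s, F_t$ given by the robust vectors, with marked vertices $u_0\in V(F_s)\cap V_i$, $v_0\in V(F_t)\cap V_j$, and within-part connectors) is the right starting point, but the decomposition you describe for $G[S\cup\{u\}]$ does not cover $S$ and cannot be rescued by a single buffer vertex. Take $S = S_1\cup S_2\cup V(F_s)\cup V(F_t)$ and try your four pieces $S_1\cup\{u\}$, $F_s$, $F_t$, $S_2\cup\{v_0\}$: the vertex $v_0$ lies in $V(F_t)$, so it is covered twice, while if you drop $F_t$ the $k-1$ vertices of $V(F_t)\setminus\{v_0\}$ are left uncovered. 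Symmetrically, in the $v$-direction the gap is $V(F_s)\setminus\{u_0\}$, a different set of $k-1$ vertices. This is exactly where your observed parity defect of $-2\pmod k$ comes from: it records an uncovered block of $k-1\equiv -1$ vertices, not a one-vertex shortfall, and moreover the missing block is a different set in the two directions, so no single added ``buffer'' vertex can fix both.

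What is actually needed is the observation that after deleting $u_0$ from $V(F_s)$ and $v_0$ from $V(F_t)$, the two remaining $(k-1)$-sets have \emph{identical} index vectors, so they can be paired up vertex-by-vertex into $k-1$ same-part pairs $(u_\ell,v_\ell)$. One then invokes closedness $k+1$ times, not twice: one connector for each pair $(u_\ell,v_\ell)$, $\ell\in[k-1]$, plus one connector $S_x$ for $(u,u_0)$ and one connector $S_y$ for $(v,v_0)$. Taking $\hat S$ to be the union of $V(F_s)$, $V(F_t)$, and all $k+1$ connectors, the $u$-direction factor uses $G[S_x\cup\{u\}]$, $G[S_y\cup\{v_0\}]$, $G[S_\ell\cup\{v_\ell\}]$ for each $\ell$, and $F_s$ on $V(F_s)$; the $v$-direction uses $G[S_y\cup\{v\}]$, $G[S_x\cup\{u_0\}]$, $G[S_\ell\cup\{u_\ell\}]$ for each $\ell$, and $F_t$ on $V(F_t)$. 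This accounts for every vertex exactly once in both directions, the parity works out to $-1\pmod k$, and the size bound $|\hat S|\le 2k^2t-1$ holds. Until you introduce these $k-1$ extra connectors, the proof has a genuine gap.
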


\begin{proof}
Let $C$, $\be>0$ and $\mathcal{P}=\{V_1, V_2,\ldots,V_C\}$ be a partition of $V(G)$ as in the assumption.
Moreover let $i\neq j\in [C]$ be such that there exists $k$-vectors $\textbf{s},\textbf{t}\in I^{\be}(\mathcal{P})$ such that $\textbf{s}-\textbf{t}=\textbf{u}_i-\textbf{u}_j$. Without loss of generality (relabelling if necessary) we may assume that $i = 1$ and $j = 2$. Then we may write $\textbf{s}=(s_1,s_2,\ldots, s_C)$ and  $\textbf{t}=(s_1-1,s_2+1,s_3,\ldots, s_C)$ for some $s_i\in \N$ such that $\sum_{i=1}^Cs_i=k$.  It suffices to show that every two vertices $x\in V_1$ and $y\in V_2$ are $\left(F,\tfrac{\be}2n,2kt\right)$-reachable so let us fix such an $x$ and $y$. Fix some vertex set $W\subset V(G)\setminus \{x,y\}$ of size at most $ \tfrac{\be}{2}n$. By the assumption of $(F,\be)$-robustness, we can pick two vertex-disjoint copies $F_1,F_2$ of $F$ in $V(G)\setminus (W\cup\{x,y\})$ whose corresponding vertex sets $S, T$ have index vectors $(s_1,s_2,\ldots, s_C)$ and $(s_1-1,s_2+1,\ldots, s_C)$, respectively.  

Note that we may choose $x'\in S\cap V_1$ and $y'\in T\cap V_2$ such that by letting $S\setminus \{x'\}=\{u_1,u_2,\ldots, u_{k-1}\}$ and $T\setminus \{y'\}=\{v_1,v_2,\ldots, v_{k-1}\}$, $u_j$ and $v_j$ belong to the same part of $\mathcal{P}$ for each $j\in [k-1]$. Since each $V_i$ is $(F,\be n, t)$-closed for each $i\in [C]$, we greedily pick a collection $\{S_1,S_2,\ldots, S_{k-1}\}$ of vertex disjoint subsets in $V(G)\setminus (W\cup S\cup T\cup\{x,y\})$ such that each $S_j$ is an $F$-connector for $u_j,v_j$ with $|S_j|\le kt-1$. Indeed, note that as $n$ sufficiently large we have that  $|W|\le \tfrac{\be n}{2} \le \be n- 2k^2t$ and so for any $k'\leq k-1$ we have
\[\left|\left(\bigcup_{j=1}^{k'}S_j\right)\cup W\cup S\cup T\cup\{x,y\}\right|\le \be n.\] We can therefore indeed pick the $S_j$ one by one because of the fact that $u_j$ and $v_j$ are $(\be n ,t)$-reachable. Similarly, we additionally choose two vertex-disjoint (from each other and all other previously chosen vertices) $F$-connectors, say $S_x$ and $S_y$, for $x,x'$ and $y,y'$, respectively. At this point, it is easy to verify that the subset $\hat S:=\bigcup_{i=1}^{k-1}S_i\cup S_x\cup S_y\cup S\cup T$ is actually an $F$-connector for $x,y$ with size at most $2k^2t-1$. For example if we want a perfect $F$-tiling in $G[\hat S \cup \{x\}] $ (leaving $y$ uncovered), we can take the perfect $F$-tilings in $G[S_x\cup\{x\}]$, $G[S_y\cup\{y'\}]$ and $G[S_j\cup\{v_j\}]$ for $j\in [k-1]$, as well as the copy $F_1$ of $F$ on $S$.   Therefore by definition, $x$ and $y$ are $\left(F,\tfrac{\be}{2}n, 2kt\right)$-reachable.
\end{proof}

To apply Lemma \ref{transferral} in the merging process, we shall make use of the following result which ensures the existence of a transferral.  We only care about the case when $F=C_k$ or $T_k$ here.  When $F$ is a clique, we have to reason a little further and we leave this to the proof of Lemma~\ref{lem:detectpartition}. Note also that Lemma \ref{v1v2} does not apply to $C_3$, we will handle $F=C_3=K_3$ with the other cliques.

\begin{lemma}\label{v1v2}
Let $k\in \N$ with $k\ge 3$ and $F$ be a $k$-vertex graph such that one of the following holds.
\begin{enumerate}
    \item[(1)] $F=T_k$ for some $k$-vertex tree;
    \item[(2)] $F=C_k$ with $k\ge 4$.
\end{enumerate}
Then for any  constants $\be>0$ and $C\in \mathbb{N}$, there exists $\alpha
>0$ such that the following holds for sufficiently large $n$.  Let $G$ be an $n$-vertex graph with $\alpha(G)\le \alpha n$ and $\mathcal{P}=\{V_1, V_2,\ldots,V_C\}$ a partition of $V(G)$ such that each $V_i$ has at least $2\be n$ vertices. Then for any distinct $i, j \in[C]$, if $e(V_i,V_j)\ge 2\be n^2$, then there exist two $k$-vectors $\textbf{s},\textbf{t}\in I^{\be}(\mathcal{P})$ such that $\textbf{s}-\textbf{t}=\textbf{u}_i-\textbf{u}_j$.
\end{lemma}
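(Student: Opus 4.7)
The plan is to exhibit two fixed $k$-vectors $\mathbf{s},\mathbf{t}\in I^{\beta}(\mathcal{P})$ satisfying $\mathbf{s}-\mathbf{t}=\mathbf{u}_i-\mathbf{u}_j$, by directly constructing a copy of $F$ realising each index vector for every forbidden set $W\subseteq V(G)$ with $|W|\le \beta n$. The key preliminary estimate is that the edge count between $V_i$ and $V_j$ is robust to the deletion of $W$: we have $e(V_i\setminus W,V_j\setminus W)\ge e(V_i,V_j)-|W|n\ge \beta n^2$, and consequently the set $V_i'':=\{u\in V_i\setminus W : d_{V_j\setminus W}(u)\ge 1\}$ has size at least $\beta n$ by a simple sum bound. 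I will choose $\alpha$ sufficiently small in terms of $\beta$, $k$ and $C$ so that Lemma~\ref{universality} is applicable in any of the sets arising in the construction.

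In case~(1), $F=T_k$, I take $\mathbf{s}=k\mathbf{u}_i$ and $\mathbf{t}=(k-1)\mathbf{u}_i+\mathbf{u}_j$. For $\mathbf{s}$, since $|V_i\setminus W|\ge \beta n>(k-1)\alpha n$, Lemma~\ref{universality} immediately provides a copy of $T_k$ inside $G[V_i\setminus W]$. For $\mathbf{t}$, I fix a leaf $\ell^*$ of $T_k$ with parent $p^*$, apply Lemma~\ref{universality} to $G[V_i'']$ to obtain a copy of $T_k-\ell^*\cong T_{k-1}$, and observe that the image $u$ of $p^*$ lies in $V_i''$ and hence has a neighbour $v\in V_j\setminus W$. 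Adjoining $v$ as the image of $\ell^*$ yields the required $T_k$ of index $\mathbf{t}$.

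In case~(2), $F=C_k$ with $k\ge 4$, I take $\mathbf{s}=(k-1)\mathbf{u}_i+\mathbf{u}_j$ and $\mathbf{t}=(k-2)\mathbf{u}_i+2\mathbf{u}_j$. For $\mathbf{s}$, averaging against the edge count yields $v\in V_j\setminus W$ with $d_{V_i\setminus W}(v)\ge \beta n$; applying Lemma~\ref{universality} inside $N(v)\cap(V_i\setminus W)$ produces a path $P_{k-1}$, and closing it at $v$ gives a $C_k$ realising $\mathbf{s}$. For $\mathbf{t}$, I consider the linear-sized set $V_j''':=\{v\in V_j\setminus W : d_{V_i\setminus W}(v)\ge \beta n/2\}$ and pick an edge $v_1v_2$ inside it (which exists since $|V_j'''|\gg\alpha n$ and $\alpha(G)\le \alpha n$). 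Writing $A_\ell:=N(v_\ell)\cap V_i\setminus W$ for $\ell=1,2$, the goal is to construct a path $P_{k-2}=u_1u_2\cdots u_{k-2}$ in $V_i\setminus W$ with $u_1\in A_1$ and $u_{k-2}\in A_2$, so that the desired $C_k$ is $v_2\,v_1\,u_1\,u_2\cdots u_{k-2}\,v_2$. Producing this path splits into two cases according to $|A_1\cap A_2|$: if $|A_1\cap A_2|\ge (k-2)\alpha n$, Lemma~\ref{universality} applied inside $A_1\cap A_2$ yields a $P_{k-2}$ with both endpoints in $A_1\cap A_2$ (hence in $A_1$ and in $A_2$); if $|A_1\cap A_2|$ is small, the union-of-independent-sets trick (were there no edges between $A_1\setminus A_2$ and $A_2\setminus A_1$, then independent sets in each would combine to give $\alpha(G[A_1\setminus A_2])+\alpha(G[A_2\setminus A_1])\le \alpha n$) forces a crossing edge $uu'$ with $u\in A_1\setminus A_2$ and $u'\in A_2\setminus A_1$, and I extend this edge to the required $P_{k-2}$ by finding a path ending at $u$ inside whichever of $A_1\setminus A_2, A_2\setminus A_1$ has smaller internal independence, again by Lemma~\ref{universality}.

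The main obstacle is this last step: realising $\mathbf{t}$ in the cycle case requires controlling both the existence of $P_{k-2}$ and the membership of its endpoints in the prescribed sets $A_1,A_2$, and the case split by $|A_1\cap A_2|$ is where the small-independence hypothesis is used most delicately. Extra care is needed in the second subcase to ensure that the crossing edge between $A_1\setminus A_2$ and $A_2\setminus A_1$ can genuinely be extended to a path of length $k-3$ without spoiling the endpoint conditions, especially for larger $k$ where the path cannot be taken to be merely the crossing edge itself.
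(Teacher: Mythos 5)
Your treatment of case (1) (trees) is correct, and in fact chooses different index vectors from the paper: you use $\textbf{s}=k\textbf{u}_i$, $\textbf{t}=(k-1)\textbf{u}_i+\textbf{u}_j$, while the paper uses $\textbf{s}=(k-1)\textbf{u}_i+\textbf{u}_j$, $\textbf{t}=(k-2)\textbf{u}_i+2\textbf{u}_j$. Both work. Your proof of $\textbf{s}\in I^{\beta}(\mathcal{P})$ in case (2) also matches the paper. The gap is in your construction of $\textbf{t}$ for cycles.

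The problem begins with your insistence that $v_1v_2$ be an edge. Forming the cycle $v_2\,v_1\,u_1\cdots u_{k-2}\,v_2$ then forces you to find a $P_{k-2}$ in $V_i\setminus W$ with one endpoint in $A_1=N(v_1)\cap(V_i\setminus W)$ and the other in $A_2=N(v_2)\cap(V_i\setminus W)$. Your first subcase ($|A_1\cap A_2|$ large) is fine, but the second subcase has two genuine defects. First, the ``union-of-independent-sets trick'' does not force a crossing edge between $B_1:=A_1\setminus A_2$ and $B_2:=A_2\setminus A_1$: the conclusion $\alpha(G[B_1])+\alpha(G[B_2])\le\alpha n$ is not a contradiction but a perfectly possible situation --- for instance $B_1$ and $B_2$ could each induce a clique, giving $\alpha(G[B_1])=\alpha(G[B_2])=1$, and indeed $V_i$ could be the disjoint union of two cliques $X,Y$ of size $n/2$ with $A_1\subset X$ and $A_2\subset Y$, in which case no crossing edge exists at all. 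Second, even if a crossing edge $uu'$ exists, you have no mechanism for extending it to a $P_{k-2}$ ending at $u$ and landing in the right sets, because Lemma~\ref{universality} provides a tree somewhere in a large set but gives no control over which vertices it uses; you flag this yourself, but it is not a side issue --- for $k>4$ it is the crux.

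The paper's proof of the $\textbf{t}$ step avoids both problems by \emph{not} requiring that the two $V_j$-vertices span an edge. It considers the set $A$ of vertices in $V_j'$ with at least $\beta n/2$ neighbours in $V_i'$, takes a constant-size subset $S\subset A$ of size $4/\beta$, and by a pigeonhole/inclusion--exclusion count finds two vertices $u,w\in S$ whose common neighbourhood $S'\subset V_i'$ has size at least $\tfrac{\beta^2}{16}n\ge k\alpha n$. A $P_{k-2}$ is then found inside $S'$ by Lemma~\ref{universality}, and since \emph{both} $u$ and $w$ are adjacent to \emph{every} vertex of $S'$, one can thread $u$ and $w$ into the cycle at two non-adjacent positions (e.g.\ $u\,p_1\,\cdots\,p_{k-3}\,w\,p_{k-2}\,u$), with no endpoint constraint on the path and no need for $uw\in E(G)$; the hypothesis $k\ge 4$ is used precisely to make the two inserted vertices non-adjacent on the cycle. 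This is exactly the ingredient your argument is missing: you fix a pair $v_1,v_2$ arbitrarily (subject to being an edge) before knowing anything about $N(v_1)\cap N(v_2)$, whereas the pair must be chosen to have a large common neighbourhood, which is what the pigeonhole over $S$ supplies.
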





\begin{proof}
Choose $\alpha<\beta, \tfrac{1}{k}$
and let $G, \mathcal{P}$ be given as in the assumption. Furthermore, suppose that for some $i\neq j \in[C]$, we have $e(V_i,V_j)\ge 2\be n^2$. Without loss of generality (after relabelling if necessary) let $i=1$ and $j=2$. We will show that  there exist $k$-vectors $\textbf{s},\textbf{t}\in I^{\be}(\mathcal{P})$ such that $\textbf{s}-\textbf{t}=\textbf{u}_1-\textbf{u}_2$.  

 Given any vertex set $W$ of size at most $\be n$, let $V_i'=V_i\setminus W$, $i\in[2]$. It is easy to check that $|V_i'|\ge \be n$ and $e(V_1',V_2')\ge \be n^2$, which immediately implies the existence of some $x_2\in V_2'$ with $d_{V_1'}(x_2)\ge \be n\ge k\alpha n$. Corollary \ref{sumner} then gives a copy of $F$ inside $V_1'\cup V_2'$ with index vector $\textbf{s}=(k-1,1,0,\ldots,0)$.  

Now let $B=\left\{v\in V_2'\mid d_{V_1'}(v)\le  \tfrac{
\be}{2}n\right\}$ and $A=V_2'\setminus B$. Then it follows that $|A|\ge  \tfrac{\be}{2}n$ because $e(V_1',V_2')\ge\be n^2$. Choose a subset $S\subset A$ of size $\tfrac{4}{\be}$, which is possible as $n$ is sufficiently large. Since each vertex $a\in A$ has $d_{V_1'}(a)>  \tfrac{\be}{2}n$, the Principle of Inclusion-Exclusion implies that there exist two vertices $u,w\in S$ 
having at least $\tfrac{\be^2}{16} n\ge k \alpha n$ common neighbours in $V_1'$. We denote these common neighbours by $S'$. Note that Lemma \ref{universality} ensures inside $S'$, the existence of the path $P_{k-2}$ or the tree $T_{k-2}$ obtained by deleting two leaves from $T_k$, which together with $\{u,w\}$ induces a $C_k$ or $T_k$, respectively. We therefore obtain a copy of $F$ whose index vector is $\textbf{t}=(k-2,2,0,\ldots,0)$, and this ends the proof because $\textbf{s},\textbf{t}\in I^{\be}(\mathcal{P})$ and $\textbf{s}-\textbf{t}=\textbf{u}_1-\textbf{u}_2$.
\end{proof}






\subsection{Proofs of Lemmas \ref{lem:detectpartition} and \ref{lem:closed}} \label{sec:merge}

We are now in a position to prove Lemmas~\ref{lem:detectpartition} and \ref{lem:closed}.
We shall first prove Lemma \ref{lem:detectpartition} by taking the partition of the vertex set given by Lemma~\ref{partition} and running a merging process that terminates on our desired partition. The proof strategy of Lemma \ref{lem:closed} then  follows  the same scheme as that of Lemma \ref{lem:detectpartition} except that by taking $\alpha^*(G)=o(n)$ and $F=T_k$ or $C_{k}$ $(k\ge4)$,  we will be able to show that the final partition must in fact be the trivial partition with just one part equal to $V(G)$. To simplify the presentation, we shall give a short proof of Lemma \ref{lem:closed} at the end of this subsection, building on the proof of Lemma~\ref{lem:detectpartition}.


\begin{proof}[Proof of Lemma \ref{lem:detectpartition}.]

Fix $k,r\in \N$, a $k$-graph $F$  and $\eta>0$ such that one of the cases $(1)-(4)$ in the statement of the lemma hold. Furthermore, fix some $D\in \N$, $\mu>0$ and choose $0<\be',\tfrac{1}{t'}\ll \de\ll \mu,\eta, \tfrac{1}{k}$.  Now let $n$ be sufficiently large and fix an $n$-vertex graph $G=(V,E)$ with $\de(G)\ge (\eta+\mu)n$ and $\alpha_r(G)\le \alpha n$. Applying Lemma \ref{reachable}, we have that every vertex in $G$ is $(F,\de n,1)$-reachable to at least $\de n$ other vertices. We then apply Lemma \ref{partition}  to obtain a partition $\mathcal{P}=\{V_1, V_2,\ldots,V_{C'}\}$ of $V$ for some integer $C'\le \ceil{\tfrac{2}{\de}}$, where each $V_i$ is $(F,\be' n, t')$-closed and $|V_i|\ge  \tfrac{\de}{2}n$. Now let $c=\ceil{\tfrac{2}{\de}}$ and define sequences of constants $\{\tau_\ell\}_{\ell=0}^c$,  $\{\ze_\ell\}_{\ell=0}^{c}$ by setting  $\tau_0=t'$, $\ze_0=\be'$ and  \[\tau_{\ell+1}=2k\tau_\ell=(2k)^{\ell+1}t', \hspace{4mm}\ze_{\ell+1}=\frac{\ze_{\ell}}{4D\tau_\ell}=\frac{\be'}{(4Dt')^{\ell+1}(2k)^{\ell!}}\] for all $0\le \ell\le c-1$. Let $t_0=\tau_c$ and $\be_0=\ze_c$ and choose $0<\alpha\ll \be_0,\tfrac{1}{t_0}$.  

 At this point, we shall proceed by iteratively merging two distinct parts as follows.  
Given the initial partition $\mathcal{P}=\{V_1, V_2,\ldots,V_{C'}\}$ where each $V_i$ is $(F,\ze_0 n, \tau_0)$-closed, if there exists distinct $i,j\in [C']$ and two $k$-vectors $\textbf{s},\textbf{t}\in I^{2\ze_1}(\mathcal{P})$ such that $\textbf{s}-\textbf{t}=\textbf{u}_i-\textbf{u}_j$, then we merge $V_i,V_j$ into a new part. By applying Lemma \ref{transferral} (with $\be=2\ze_1$), we obtain that $V_i\cup V_j$ is $(F,\ze_1 n, 2k\tau_0)$-closed. By renaming the parts if necessary, the resulting partition $\mathcal{P}=\{V_1, V_2,\ldots,V_{C'-1}\}$ therefore satisfies that each $V_i$ is $(F,\ze_1 n, \tau_1)$-closed, and this finishes the first step.  

At step $\ell$ $(\ell\ge 2)$, anchoring at the current partition $\mathcal{P}= \{V_1,\ldots,V_{C'+1-\ell}\}$ where each $V_i$ is $(F, \ze_{\ell-1} n, \tau_{\ell-1})$-closed, whenever there exist distinct $i,j\in [C'+1-\ell]$ and two $k$-vectors $\textbf{s},\textbf{t}\in I^{2\ze_\ell}(\mathcal{P})$ such that $\textbf{s}-\textbf{t}=\textbf{u}_i-\textbf{u}_j$, we merge $V_i$ and $V_j$ into a new part. By applying Lemma \ref{transferral} (with $\be=2\ze_\ell$) we obtain that $V_i\cup V_j$ is $(F,\ze_\ell n, \tau_\ell)$-closed. By renaming if necessary, we end the $\ell$-th step with a new partition $\mathcal{P}= \{V_1,\ldots,V_{C'-\ell}\}$ where each $V_i$ is $(F,\ze_\ell n, \tau_\ell)$-closed. In this way, we continue until the procedure terminates after at most $C'-1<c$ steps.  

Suppose we end up with a final partition $\mathcal{P}= \{V_1,\ldots,V_{C'-s}\}$ for some integer $s<C'$, where each $V_i$ is $(F,\ze_s n, \tau_s)$-closed. Fix $C=C'-s$. If $C=1$, we are done by simply letting $\be=\ze_s>\be_0$ and $t=\tau_s\le t_0$.  So assume that $C>1$. By the definition of $C$, it follows that for any distinct $i,j\in [C]$, there are no pairs of $k$-vectors $\textbf{s},\textbf{t}\in I^{2\ze_{s+1}}(\mathcal{P})$ such that $\textbf{s}-\textbf{t}=\textbf{u}_i-\textbf{u}_j$. Now we proceed by investigating the structure of $\mathcal{P}$. Let $R$ be an auxiliary graph defined on $C$ vertices, say $V(R)=\{v_1,v_2,\ldots,v_{C}\}$, such that $v_i,v_j$ are adjacent if and only if $e(V_i,V_j)\ge 4\ze_{s+1} n^2$

\begin{claim}
$R$ is an empty graph.
\end{claim}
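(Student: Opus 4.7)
To establish the claim I would argue by contradiction. Suppose some $v_iv_j\in E(R)$, so that $e(V_i,V_j)\geq 4\zeta_{s+1}n^2$. Setting $\beta:=2\zeta_{s+1}$, the plan is to exhibit two $k$-vectors $\mathbf{s},\mathbf{t}\in I^{\beta}(\mathcal{P})$ with $\mathbf{s}-\mathbf{t}=\mathbf{u}_i-\mathbf{u}_j$; this directly contradicts the minimality of $C$ that terminated the merging process. When $F=T_k$ (case $(3)$) or $F=C_k$ with $k\geq 4$ (the $k\geq 4$ subcase of $(4)$), the transferral is immediate from Lemma~\ref{v1v2} applied with $\beta=2\zeta_{s+1}$: each part $V_\ell$ of $\mathcal{P}$ has size at least $\tfrac{\delta}{2}n\gg 2\beta n$ by our choice of constants, so its hypotheses hold.

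All remaining cases are clique cases: cases $(1)$ and $(2)$, together with the $k=3$ subcase of $(4)$, where $F=C_3=K_3$. In each of these we have $\alpha_{k-1}(G)\leq \alpha n$ (in case $(1)$ this comes from $\alpha_{k-1}\leq \alpha_k\leq \alpha n$, and elsewhere it is built into the hypothesis). Fix any $W\subseteq V(G)$ with $|W|\leq \beta n$, write $V_i':=V_i\setminus W$ and $V_j':=V_j\setminus W$, and note that removing $W$ deletes at most $|W|n\leq \beta n^2$ edges between $V_i$ and $V_j$, so $e(V_i',V_j')\geq 2\zeta_{s+1}n^2$. The strategy is to find a complete graph on $2k-2$ vertices inside $V\setminus W$ with exactly $k-1$ vertices in $V_i'$ and $k-1$ vertices in $V_j'$; such a $K_{2k-2}$ contains a copy of $K_k$ with index vector $a\mathbf{u}_i+(k-a)\mathbf{u}_j$ for every $a\in\{1,\dots,k-1\}$, and (since $k\geq 3$) the consecutive choices $a=2$ and $a=1$ yield two index vectors differing by exactly $\mathbf{u}_i-\mathbf{u}_j$.

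To construct such a $K_{2k-2}$, I would double count pairs $(K,y)$ where $K$ is a copy of $K_{k-1}$ in $V_i'$ and $y\in V_j'$ is a common neighbour of all vertices of $K$. Averaging shows there are linearly many $y\in V_j'$ with $d_{V_i'}(y)\geq \zeta_{s+1}n$, and for each such $y$ a supersaturation argument applied to $\alpha_{k-1}(G)\leq \alpha n$ inside the linear-sized set $N(y)\cap V_i'$ produces $\Omega(n^{k-1})$ copies of $K_{k-1}$ there. Summing gives $\Omega(n^k)$ pairs, while the number of $K_{k-1}$-candidates in $V_i'$ is only $O(n^{k-1})$; averaging forces some $K_{k-1}$-copy $K\subseteq V_i'$ with $|N(K)\cap V_j'|=\Omega(n)>\alpha n$. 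Applying $\alpha_{k-1}(G)\leq \alpha n$ once more inside $N(K)\cap V_j'$ gives a $K_{k-1}$-copy $K'\subseteq N(K)\cap V_j'$, and then $K\cup K'$ is the desired $K_{2k-2}$ in $V\setminus W$. Since $W$ was arbitrary, both $2\mathbf{u}_i+(k-2)\mathbf{u}_j$ and $\mathbf{u}_i+(k-1)\mathbf{u}_j$ lie in $I^{\beta}(\mathcal{P})$, producing the forbidden transferral. The main obstacle is that the naive candidate pair $(k\mathbf{u}_i,(k-1)\mathbf{u}_i+\mathbf{u}_j)$ would demand a $K_k$ inside $V_i\setminus W$, which need not exist under the weaker condition $\alpha_{k-1}(G)=o(n)$ of case $(2)$; the $K_{2k-2}$-construction bypasses this by only needing copies of $K_{k-1}$, which are abundant in every linear-sized set.
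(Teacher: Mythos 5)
Your treatment of the tree and cycle cases via Lemma~\ref{v1v2} is correct and matches the paper. However, the unified ``$K_{2k-2}$'' argument for the clique cases contains a genuine gap at the supersaturation step. The assumption $\alpha_{k-1}(G)\le\alpha n$ does \emph{not} imply that a linear-sized set $U=N(y)\cap V_1'$ contains $\Omega(n^{k-1})$ copies of $K_{k-1}$. It only yields $\Theta(n)$ copies, via the greedy fact that one can repeatedly extract vertex-disjoint $(k-1)$-cliques while more than $\alpha n$ vertices remain; the classical supersaturation phenomenon requires a \emph{density} hypothesis on $U$, not just a bound on its $K_{k-1}$-independence number. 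Concretely, a random graph $G(cn,p)$ with $p$ tuned just above the threshold for $\alpha_{k-1}\le\alpha n$ has only $\Theta(n\cdot\mathrm{polylog}(n))$ copies of $K_{k-1}$ (e.g.\ for $k=3$: $p=\Theta(\log n/n)$ gives $\alpha=o(n)$ but only $\Theta(n\log n)$ edges). With only $\Theta(n)$ copies of $K_{k-1}$ per vertex $y$, your double count produces $\Theta(n^2)$ pairs $(K,y)$, which after averaging over the up to $O(n^{k-1})$ candidate $(k-1)$-sets in $V_1'$ gives an average of $O(n^{3-k})\le O(1)$ (for $k\ge 3$); nothing forces a copy $K$ with $\Omega(n)$ common neighbours in $V_2'$. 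The argument therefore does not close.

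A second warning sign is that your argument never invokes the hypothesis $\eta\ge\tfrac1k$ of case $(2)$, whereas Proposition~\ref{prop:lower K_k alpha_k-1} shows this threshold is genuinely necessary for $r=k-1$. The paper's actual treatment of case $(2)$ is structurally different and exploits the degree condition essentially: it first establishes $\Delta(R)\le 1$; then, assuming $\{v_1,v_2\}\in E(R)$, it observes that minimality of $C$ forces $k\mathbf{u}_1,k\mathbf{u}_2\notin I^{2\zeta_{s+1}}(\mathcal{P})$, hence there exist small sets $W_i$ so that $G[V_i\setminus W_i]$ is $K_k$-free and every vertex there has fewer than $\alpha n$ neighbours inside its own part; combining this with $\delta(G)\ge(\tfrac1k+\mu)n$ and the sparsity between parts forces every typical vertex of $V_1$ to have more than $(\tfrac1k+\tfrac\mu2)n$ neighbours in $V_2$ (and vice versa), from which one builds a copy of $K_k$ with index vector $(k-2)\mathbf{u}_1+2\mathbf{u}_2$ via inclusion--exclusion. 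Finally, in case $(1)$ the paper's argument is much simpler than yours: $k\mathbf{u}_2$ is $(F,2\zeta_{s+1})$-robust directly from $\alpha_k(G)\le\alpha n$, and pairing it with $\mathbf{u}_1+(k-1)\mathbf{u}_2$ immediately gives the transferral without any counting of $K_{2k-2}$'s.
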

\begin{proof}
Suppose otherwise that there is an edge in $R$, say without loss of generality  $\{v_1,v_2\}\in E(R)$. If we are in the case where $r=2$ and  $F=T_k$ for some $k$-vertex tree $T_k$ or $C_k$ for $k\ge 4$ (this corresponds to cases $(3)$ and $(4)$ in the statement of Lemma~\ref{lem:detectpartition} minus the $C_3$ part of case $(4)$ which will be covered by the analysis of case $(2)$), then by applying Lemma \ref{v1v2} (with $\be=2\ze_{s+1}$), 
we obtain two $k$-vectors $\textbf{s},\textbf{t}\in I^{2\ze_{s+1}}(\mathcal{P})$ such that $\textbf{s}-\textbf{t}=\textbf{u}_1-\textbf{u}_2$, contrary to the assumption. If $F=K_k$ and $r=k$ (case $(1)$ of the lemma), then it is easy to verify that $\textbf{s}=\textbf{u}_1+(k-1)\textbf{u}_2\in I^{2\ze_{s+1}}(\mathcal{P})$, using for example Lemma~\ref{lem:large fans}, whilst $\textbf{t}=k\textbf{u}_2\in I^{2\ze_{s+1}}(\mathcal{P})$ by our upper bound on $\alpha_k(G)$. This again contradicts our definition of $C$.   

Thus it remains to consider the case $F=K_k$ and $r=k-1$ (Case $(2)$ in the statement of Lemma~\ref{lem:detectpartition}). In this case, we first show that $R$ has maximum degree at most $1$. Suppose otherwise that there are two adjacent edges in $R$, say $\{v_1,v_2\},\{v_1,v_3\}\in E(R)$ (and so in particular $C\geq 3$). Note that for any choice of $W\<V(G)$ of size $2\ze_{s+1}n$, by letting $V_i'=V_i\setminus W$, $i\in[3]$, we have $e(V_1',V_2')\ge 2\ze_{s+1} n^2, e(V_1',V_3')\ge 2\ze_{s+1} n^2$. By averaging, there exist two vertices $x_2\in V_2', x_3\in V_3'$ such that both $x_2$ and $x_3$ have at least $2\ze_{s+1} n\ge \alpha n$ neighbours in $V_1'$. Thus we can find two copies of $F$, whose index vectors are $\textbf{s}_2=(k-1,1,0,\ldots,0)$ and $\textbf{t}_2=(k-1,0,1,\ldots,0)$, respectively, and we are done because $\textbf{s}_2,\textbf{t}_2\in I^{2\ze_{s+1}}(\mathcal{P})$ and $\textbf{s}_2-\textbf{t}_2=\textbf{u}_2-\textbf{u}_2$, contradicting the definition of $C$.  

So assume that $\De(R)\le1$ and as before, that $\{v_1,v_2\}\in E(R)$. We will  again reach a final contradiction by finding two  $k$-vectors $\textbf{s},\textbf{t}\in I^{2\ze_{s+1}}(\mathcal{P})$ such that $\textbf{s}-\textbf{t}=\textbf{u}_1-\textbf{u}_2$. It is easy to observe (for example by averaging and appealing to Lemma~\ref{lem:large fans}) that $\textbf{u}_1+(k-1)\textbf{u}_2,(k-1)\textbf{u}_1+\textbf{u}_2\in I^{2\ze_{s+1}}(\mathcal{P})$, which forces that  $k\textbf{u}_1,k\textbf{u}_2\notin I^{2\ze_{s+1}}(\mathcal{P})$. Hence by the definition of $(F, 2\ze_{s+1})$-robustness, for each $i\in[2]$, there exists a subset $W_i$ of at most $2\ze_{s+1}n$ vertices such that $G[V_i\setminus W_i]$ does not contain any copy of $K_k$. It follows that every vertex $v\in V_i\setminus W_i$ has $d_{V_i\setminus W_i}(v)<\alpha n$ for $i\in [2]$. In this case, we show that $\textbf{t}=(k-2)\textbf{u}_1+2\textbf{u}_2\in I^{2\ze_{s+1}}(\mathcal{P})$, which suffices due to the fact that $\textbf{s}=(k-1)\textbf{u}_1+\textbf{u}_2\in I^{2\ze_{s+1}}(\mathcal{P})$. 
Now for any choice of $W\<V(G)$ of size $2\ze_{s+1}n$, for $i=1,2$ let \[V_i''=V_i\setminus (W_i\cup W),~Q=\bigcup_{3\le j\le C}V_j ~\text{and}~ B_i=\{v\in V_i''\mid d_Q(v)\ge \de^2 n\}.\] Since $C\le c=\ceil{\tfrac{2}{\de}}$  and
$|B_i|\de^2 n\le e(V_i'',Q)<4C\ze_{s+1} n^2$ for each $i\in[2]$,
it holds that $|B_i|<\de^2 n<|V_i''|$, using that $\ze_{s+1}<\ze_0\ll \de$. Furthermore, we have that for every $i\in[2]$ and $v\in V_i''\setminus B_i$, \begin{equation} \label{eq:useful}
     d_{V_{3-i}''\setminus B_{3-i}}(v)\ge (\eta+\mu) n-|W|-|W_{3-i}|-\alpha n-\de^2 n-|B_{3-i}|>\left(\tfrac{1}{k}+\tfrac{\mu}{2}\right) n.\end{equation}
Without loss of generality assume that $|V_1|\le |V_2|$ and so $|V_1|\le \tfrac{n}{2}$.
 For any fixed $v\in V_1''\setminus B_1$, by \eqref{eq:useful}, we can find a subset $S_v\subset N(v)\cap V_2''\setminus B_2$ be of size $k-1$ such that $\{v\}\cup S_v$ induces a copy of $K_k$. Then again using \eqref{eq:useful}, the fact that $|V_1''\setminus B_1|\le |V_1|\le \tfrac{n}{2}$ and the Principle of Inclusion-Exclusion, there must be two vertices $u,w\in S_v$ having at least $\de n$ common neighbours in $V_1''\setminus B_1$.  This immediately yields a copy of $K_k$ with index vector $\textbf{t}=(k-2)\textbf{u}_1+2\textbf{u}_2$ as required.   
\end{proof}

So let $s,C$ and $\cP=\{V_1,\ldots,V_C\}$ be the final partition of the merging process, as above. Fix  $\be=\ze_s>\beta_0$ and $t=\tau_s<t_0$.
Now we have that each $V_i$ is $(F,\be,t)$-closed and from the claim,  $e(V_i, V_j)<4\ze_{s+1} n^2=\tfrac{\be}{Dt} n^2$ for any distinct $i,j\in[C]$. Moreover, for each $i\in [C]$, let $Q_i=\bigcup_{j\neq i}V_j$ and $B_i=\left\{v\in V_i\mid d_{Q_i}(v)\ge \tfrac{\mu}{2} n\right\}$. Since $\ze_{s}\ll\de$, $C<c$ and $|B_i|\tfrac{\mu}{2} n\le e(V_i,Q)<4C\ze_{s+1} n^2$, we deduce that $|B_i|<\tfrac{\de}{2}n<|V_i|$. Thus for each $i\in[C]$ and $v\in V_i\setminus B_i$, $|V_i|>d_{V_i}(v)>\left(\eta+\tfrac{\mu}{2}\right)n$.  This concludes the proof of the lemma.  
\end{proof}

We continue by proving Lemma \ref{lem:closed}.

\begin{proof}[Proof of Lemma \ref{lem:closed}]
Fix $k\in \N$, a $k$-graph $F$  and $\eta>0$ such that one of the cases $(1^*)$ or $(2^*)$ in the statement of the lemma hold. Now, fixing $r=2$, note that we fall into case $(3)$ or $(4)$ of Lemma~\ref{lem:detectpartition}. Moreover as we have that $\alpha_r(G)\le 2\alpha_r^*(G)$ for any graph $G$,  we have that, by choosing $\alpha$ sufficiently small, the conclusion of Lemma~\ref{lem:detectpartition} holds. Notice that the conclusion of Lemma~\ref{lem:closed} follows from Lemma~\ref{lem:detectpartition} if $C=1$ for  all  graphs $G$ we are interested in. Our proof strategy thus follows the proof of Lemma~\ref{lem:detectpartition} and we simply need to prove that, with the added condition that $\alpha_2^*(G)\le \alpha n$, we have that $C=1$.   

So define all parameters as in the proof of Lemma~\ref{lem:detectpartition} and
suppose to the contrary that $C=C'-s\ge 2$. Take $\cP=\{V_1,\ldots, V_C\}$ to be the partition at the end of the merging process. 
For $i=1,2$, define $Q_i=\bigcup_{j\neq i}V_j$ and $B_i=\{v\in V_i\mid d_{Q_i}(v)\ge \de^2 n\}$. For each $i=1,2$, since $|B_i|\de^2 n\le e(V_i,Q_i)<4C\ze_{s+1} n^2$, we deduce that $|B_i|< \de^2 n \le|V_i|$ using that $\ze_{s+1}\ll \de$ and therefore every $v\in V_i\setminus B_i$ has $d_{V_i\setminus B_i}(v)\ge \mu n-\de^2n-|B_i|> \tfrac{\mu}{2} n$, using that $\de\ll \mu$.  Therefore,  we have that  $k\textbf{u}_i \in I^{2\ze_{s+1}}(\mathcal{P})$ for each $i\in [2]$ by Lemma~\ref{lem:large fans} for instance.  

Based on this, we reach a final contradiction by showing that $\textbf{u}_1+(k-1)\textbf{u}_2\in I^{2\ze_{s+1}}(\mathcal{P})$. In fact, for any choice of $W\<V(G)$ of size $2\ze_{s+1}n$, let $V_i'=V_i\setminus W$ for each $i\in [2]$. Then for any fixed subset $A_1\subset V_1'$ of size $\alpha n$, it follows by the definition of $\alpha^*(G)$ that $e(A_1,V_2'\setminus B_2)\ge|V_2'\setminus B_2|-\alpha n>\tfrac{\de}{4} n$.
Hence, there exists a vertex $a_1\in A_1$ that has a set $S$ of at least $\frac{\de}{4\alpha}$ neighbours in $V_2'\setminus B_2$. As $\alpha \ll \de$, it follows from the Inclusion-Exclusion  Principle and the fact that $S\subset V_2'\setminus B_2$, that there exist two vertices $u,w\in S\cap N(a_1)$ having a set $Y$ of at least $k\alpha n$ common neighbours in $V_2'\setminus B_2$. If we are in case $(1^*)$ and $F=T_k$ for some $k$-vertex tree, let $T_{k-1}$ be the graph obtained by removing a leaf $w$ from $F$ and let $T_{k-2}$ be the graph obtained by removing the unique neighbour of $w$ in $F$, from $T_{k-1}$. Then applying Lemma~\ref{universality} in $Y$ guarantees  a copy of $T_{k-2}$ in $Y$. Consequently, there is a copy of   $T_{k-1}$  inside $Y\cup\{u\}$ which together with $\{a_1\}$, forms a copy of $T_k$. If we are in case $(2^*)$ and $F=C_k$ for some $k\ge 4$, Lemma~\ref{universality} guarantees  a copy of  $P_{k-3}$  inside $Y$ which together with $\{a_1,u,w\}$, forms a copy\footnote{Note that the assumption that $k\ge 4$ is crucial for this argument to work.} of $C_k$. In either case we have a copy of $F$ with index vector $\textbf{u}_1+(k-1)\textbf{u}_2$, and thus $\textbf{t}=\textbf{u}_1+(k-1)\textbf{u}_2\in I^{2\ze_{s+1}}(\mathcal{P})$. As $\textbf{s}=k\textbf{u}_2\in I^{2\ze_{s+1}}(\mathcal{P})$, and $\textbf{s}-\textbf{t}=\textbf{u}_2-\textbf{u}_1$, this contradicts that the merging process has terminated, concluding the proof.

\end{proof}

\section{Almost perfect tilings}\label{sec:almost}

In this section we address Proposition~\ref{prop:almost factors}, showing that in all the settings of interest, we can always find an $F$-tiling covering all but some small linear number of vertices. Proposition~\ref{prop:almost factors} is split into cases $(1)-(4)$ depending on the different graphs $F$ as well as the independence and minimum degree conditions. In fact, for some of these cases the conclusion of Proposition~\ref{prop:almost factors} is  immediate. Indeed in case $(1)$ where $F=K_k$ and $r=k$, the independence condition $\alpha_r(G)\le \alpha n$ guarantees an $F$-tiling covering all but $\delta n$ vertices for any $0< \alpha \ll \delta$. Similarly, for case $(3)$ when $F$ is some $k$-vertex tree and $r=2$, a simple application of Lemma~\ref{universality} guarantees that any graph $G$ with $\alpha(G)\le \alpha n$ has a $T_k$-tiling covering all but $\de n$ vertices (again choosing $0<\alpha \ll \de$).  



Therefore it remains to prove case $(2)$ and case $(4)$  of Proposition~\ref{prop:almost factors}, for which we will use the powerful Szemer\'{e}di Regularity Lemma \cite{Szemeredi1978}. and an almost tiling theorem of Shokoufandeh and Zhao \cite{shokou03}. 
We first introduce some basic notation to state our main tools. Given a graph $G$ and $A$, $B$  disjoint subsets of $V(G)$, the \emph{density} between $A$ and $B$ is defined as
\[d(A,B)=\frac{e(A,B)}{|A||B|}.\]

\begin{defn}[$\epsilon$-regularity]
Let $\epsilon>0$. A pair $(A, B)$ of disjoint vertex-sets in $G$
is $\epsilon$-\emph{regular} if for every $X \< A$ and $Y \< B$ satisfying $|X|>\epsilon |A|, ~~|Y|>\epsilon |B|,$ we have $|d(X,Y)-d(A,B)|<\epsilon.$
\end{defn}




We will use the following form of the regularity lemma, as given in \cite{kom-sim}.

\begin{lemma}[Regularity Lemma-Degree Form]\emph{\cite[Theorem 1.10]{kom-sim}}\label{reg}
For every $\epsilon>0$ there exists $M=M(\epsilon)$ such that the following holds for any fixed real number $d \in [0, 1]$. Let $G=(V, E)$ be an $n$-vertex graph. Then there is a partition $\{V_0, V_1,\ldots, V_{\ell}\}$ of ~$V$ for some $\tfrac{1}{\epsilon}\le\ell<M$, and a spanning subgraph $G'$ of $G$ satisfying the following properties:
\begin{enumerate}
  \item [$(\textbf{R1})$] $|V_0|\le \epsilon n$ and there exists some $L\in \N$ with $L\le\lceil\epsilon n\rceil$ such that for all $1\leq i \leq \ell$ the cluster $V_i$ induces an independent set in $G'$ and $|V_i|=L$;
  \item [$(\textbf{R2})$] each $v\in V$ satisfies $d^{G'}(v)\ge d^{G}(v)-(d+\epsilon)n$;
  \item [$(\textbf{R3})$] every pair $(V_i, V_j)$, $1\le i < j\le \ell$, is $\epsilon$-regular in $G'$, with density either $0$
or greater than~$d$.
\end{enumerate}
\end{lemma}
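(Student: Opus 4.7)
The plan is to derive this degree form from the standard Szemer\'edi Regularity Lemma (the version without the degree condition) by a cleanup step. First apply the standard lemma with parameter $\epsilon' \ll \epsilon, d$, to be chosen below, and with a lower bound of $\lceil 1/\epsilon \rceil$ on the number of parts, obtaining an initial partition $\{V_0, V_1, \ldots, V_\ell\}$ with $|V_0| \le \epsilon' n$, clusters of common size $L \le \lceil \epsilon' n \rceil$, and at most $\epsilon' \binom{\ell}{2}$ pairs $(V_i, V_j)$ failing to be $\epsilon'$-regular. The cleanup is to declare as $G'$ the subgraph of $G$ obtained by deleting every edge lying (i) inside some cluster $V_i$ for $i \ge 1$, (ii) inside an $\epsilon'$-irregular pair $(V_i, V_j)$, or (iii) inside an $\epsilon'$-regular pair $(V_i, V_j)$ of density at most $d$; edges incident to $V_0$ are left intact. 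Properties R1 and R3 follow immediately from the construction: each $V_i$ with $i \ge 1$ becomes independent, and every surviving cluster pair is $\epsilon'$-regular (hence $\epsilon$-regular, since $\epsilon' \le \epsilon$) of density either $0$ or strictly greater than $d$.

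The remaining work is to verify the per-vertex degree condition R2. For $v \in V_0$ no incident edge is removed, so R2 is trivial. For $v \in V_i$ with $i \ge 1$ the loss decomposes into at most $L-1$ edges inside $V_i$; the edges of $v$ in irregular pairs $(V_i, V_j)$, totalling at most $L$ times the number of $V_j$ with $(V_i,V_j)$ irregular; and the edges of $v$ in low-density regular pairs $(V_i, V_j)$. Summed over $v$, these contributions average respectively to at most $L$, $\epsilon' \ell \cdot L \le \epsilon' n$ (by Markov applied to the global bound on irregular pairs), and $dn$ (by the density definition). The main obstacle is that for individual $v$ the latter two contributions can spike well above the average.

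To handle this, I enlarge $V_0$ to absorb the atypical vertices: those lying in a cluster involved in more than $\sqrt{\epsilon'}\,\ell$ irregular pairs (by Markov, these number at most $2\sqrt{\epsilon'}\,\ell \cdot L \le 2\sqrt{\epsilon'}\,n$), and those whose degree into some low-density regular partner $V_j$ exceeds $(d + \epsilon') L$ (by the standard fact that in any $\epsilon'$-regular pair of density $\rho$ at most $\epsilon' L$ vertices of one side deviate by more than $\epsilon' L$ from $\rho L$, these number at most $\epsilon' L \cdot \ell \le \epsilon' n$ across all low-density pairs). Now re-apply the edge-removal rules with respect to the \emph{enlarged} partition, so that edges incident to the new $V_0$ are kept; then R2 is automatic for $V_0$-vertices, and for any remaining $v$ the total loss is at most $L + \sqrt{\epsilon'}\,n + (d + \epsilon')n \le (d + \epsilon) n$. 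A small further bookkeeping step, truncating each cluster down to a common size by sweeping $O(\epsilon' n)$ more vertices into $V_0$, preserves R1 at the cost of an additional $O(\epsilon' n)$ contribution. Picking $\epsilon' \le (\epsilon / C)^2$ for a suitable constant $C$ ensures the final $|V_0| \le \epsilon n$, completing the verification of R1--R3.
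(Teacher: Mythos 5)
This lemma is not proved in the paper at all; it is quoted verbatim from Koml\'os--Simonovits (their Theorem 1.10), so there is no in-paper argument to compare against. Your overall strategy (apply the plain Regularity Lemma with a finer parameter $\epsilon'$, delete edges inside clusters, inside irregular pairs and inside sparse regular pairs, and sweep atypical vertices into $V_0$) is indeed the standard derivation of the degree form. However, there is a genuine gap in your treatment of the low-density pairs.

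The problem is the count of vertices ``whose degree into some low-density regular partner $V_j$ exceeds $(d+\epsilon')L$''. Regularity gives at most $\epsilon' L$ such vertices \emph{per ordered pair} $(V_i,V_j)$, and a fixed cluster $V_i$ has up to $\ell-1$ low-density partners, so the set of vertices of $V_i$ that are atypical for \emph{at least one} partner is only bounded by $\epsilon' L(\ell-1)$. Since $\ell$ can be as large as the tower-type bound $M(\epsilon')$, we have $\epsilon'\ell\gg 1$, so this bound exceeds $|V_i|=L$ and is vacuous: every vertex could be atypical for some partner. Globally your claimed bound of $\epsilon' L\cdot \ell\le\epsilon' n$ undercounts by a factor of order $\ell$ (there are $\Theta(\ell^2)$ pairs, not $\ell$), and the set you propose to move into $V_0$ could be essentially all of $V$, destroying R1. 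The repair is a second averaging step, exactly parallel to the one you correctly use for irregular pairs: for $v\in V_i$ let $k_v$ be the number of low-density partners $V_j$ with $d_{V_j}(v)>(d+\epsilon')L$; then $\sum_{v\in V_i}k_v\le \epsilon' L\ell$, so at most $\sqrt{\epsilon'}L$ vertices of each cluster have $k_v>\sqrt{\epsilon'}\ell$, and only these (at most $\sqrt{\epsilon'}n$ in total) need to be discarded. For every surviving $v$ the loss to low-density pairs is at most $(d+\epsilon')L\ell + k_v L\le (d+\epsilon')n+\sqrt{\epsilon'}\,n$, which the $\epsilon n$ slack absorbs. A secondary point you should also address: after moving vertices to $V_0$ and truncating clusters, the density of a retained pair can drop below $d$ (violating R3) if it was only marginally above $d$; this is fixed by deleting, at the outset, all pairs of density at most $d+C\sqrt{\epsilon'}$ rather than at most $d$, at a further cost of $C\sqrt{\epsilon'}n$ in R2.
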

For a partition as given by Lemma~\ref{reg}, we also define the \emph{reduced graph}, denoted by $R_d$, which is  defined on vertex set $[\ell]$ by declaring $ij\in E(R_d)$ if the corresponding pair $(V_i,V_j)$ has density at least $d$ in $G'$.   

We will also use some known embedding results. For an $r$-chromatic graph $H$ on $k$ vertices, we write $u= u(H)$ for the smallest possible size of a color-class over all proper $r$-colorings of $H$. The \emph{critical chromatic number} of $H$ is \[\chi_{cr}(H):=\frac{(r-1)k}{k-u}.\]
Here we shall make use of the following result by Shokoufandeh and Zhao \cite{shokou03}, which resolved a conjecture  of Koml\'{o}s \cite{kom00}. The theorem in this form can be derived directly from  \cite[Theorem 1.9]{kom00}. 

\begin{theorem}\emph{\cite{shokou03}}\label{H-tiling}
For any $k\in N $  and a $k$-vertex graph $H$ there exist integers $N=N(k)$ and $K=K(k)$ such that, for all $n\ge N$, if $G$ is any $n$-vertex graph with \[\de(G)\ge\left(1-\frac{1}{\chi_{cr}(H)}\right)n,\] then $G$ contains an $H$-tiling that covers all but at most $K$ vertices of $G$.
\end{theorem}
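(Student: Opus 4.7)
The plan is to prove this via the Szemer\'edi Regularity Lemma (Lemma~\ref{reg}) combined with a blow-up embedding and a small absorption step to convert ``all but a linear number of vertices uncovered'' into ``all but a constant number uncovered''. Let $r=\chi(H)$ and let $u=u(H)$ be the minimum colour class size over proper $r$-colourings of $H$, so that $\chi_{cr}(H)=(r-1)k/(k-u)$, and fix $H$-colour-classes $A_1,\dots,A_r$ with $|A_1|=u$. Choose constants $0<\epsilon \ll d \ll 1/\chi_{cr}(H),1/k$, pick $K$ large (depending on $k$), and apply Lemma~\ref{reg} to $G$ with these parameters, obtaining clusters $V_0,V_1,\dots,V_\ell$ and a reduced graph $R_d$.

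First I would verify that $R_d$ inherits an almost matching minimum degree condition: a standard averaging argument using $(\textbf{R1})$--$(\textbf{R3})$ gives $\delta(R_d)\ge (1-1/\chi_{cr}(H)-\epsilon')\ell$ for some $\epsilon'=O(\epsilon+d)$. The core step is then to find in $R_d$ a collection $\mathcal{K}$ of vertex-disjoint copies of $K_r$ that cover all but $o(\ell)$ clusters, such that each copy $K\in\mathcal{K}$ can be \emph{blown up} into copies of $H$ efficiently. The point of the critical chromatic number is precisely that in a ``$K_r$-blow-up'' with cluster sizes in ratio $(k-u):k:k:\cdots:k$ (the first class playing the role of the small colour class $A_1$ of $H$), one can tile with copies of $H$ using all vertices; this is the ratio that the degree condition $(1-1/\chi_{cr}(H))n$ just barely sustains. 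So I would use a fractional-tiling argument in $R_d$ (solving an LP whose feasibility follows from the degree bound) and then round to an integer $K_r$-tiling, adjusting so that the blow-up sizes match the required ratio up to an $\epsilon$-fraction of each cluster.

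Next, inside each $K=\{V_{i_1},\dots,V_{i_r}\}\in\mathcal{K}$, I would apply the Blow-Up Lemma (or a direct greedy embedding exploiting $\epsilon$-regularity) to find an $H$-tiling covering all but at most $\epsilon L$ vertices of $V_{i_1}\cup\cdots\cup V_{i_r}$. Together with $|V_0|\le\epsilon n$ and the clusters left uncovered by $\mathcal{K}$, this yields an $H$-tiling of $G$ covering all but $\delta n$ vertices for any prescribed $\delta>0$, which is the classical Koml\'os ``almost perfect tiling'' theorem.

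The main obstacle is promoting the uncovered leftover from $\delta n$ down to a constant $K=K(k)$: this is precisely what Shokoufandeh--Zhao's refinement achieves, and my plan is to do it via a pre-processing absorption. Before the regularity step, I would reserve a small set $W\subset V(G)$ of size $O(1)$ (a constant number of pre-selected vertex-disjoint copies of $H$ together with auxiliary ``swap'' copies) with the property that $W$, together with any very small admissible remainder $U$, can be repartitioned into copies of $H$. After running the regularity argument on $G-W$ to get an $H$-tiling missing only $\delta v(G-W)$ vertices, I would iteratively shrink the exceptional set: whenever an uncovered vertex $v$ has many $H$-embeddings extending into the used clusters, reroute a copy of $H$ to swallow $v$, paying one cluster vertex in exchange. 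Since $v$ has $(1-1/\chi_{cr}(H))n$ neighbours and each absorption only disturbs a bounded neighbourhood, iterating this reduces the exceptional set to $O(1)$ vertices, at which point the reserved $W$ absorbs the final $O(1)$ leftover. Getting this absorbing step to interact cleanly with the regularity partition --- in particular ensuring that ``many $H$-extensions'' always exists for uncovered vertices --- is the delicate part and is where the tight use of the critical chromatic number is essential.
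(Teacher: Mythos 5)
This statement is not proved in the paper at all: it is quoted verbatim from Shokoufandeh and Zhao \cite{shokou03} (resolving a conjecture of Koml\'os) and used as a black box in the proof of Proposition~\ref{prop:almost factors2}, so there is no in-paper argument to compare against. Judged on its own terms, your proposal reproduces the standard proof of the \emph{weaker} Koml\'os theorem and then has a genuine gap exactly where the content of Shokoufandeh--Zhao lies. The regularity-plus-blow-up portion (reduced graph of minimum degree roughly $\bigl(1-1/\chi_{cr}(H)\bigr)\ell$, fractional tiling, rounding, greedy embedding inside regular tuples) yields an $H$-tiling missing $\delta n$ vertices for any fixed $\delta>0$, and no better: each regular $r$-tuple loses $\Theta(\epsilon L)$ vertices from the embedding and rounding steps alone, which summed over all tuples is linear in $n$. (A smaller slip: the critical part ratios should be $u:\tfrac{k-u}{r-1}:\cdots:\tfrac{k-u}{r-1}$, not $(k-u):k:\cdots:k$.)

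The step that is supposed to bring the leftover from $\delta n$ down to a constant $K(k)$ does not work as described. A reserved set $W$ of size $O(1)$ can only ever absorb a remainder of size $O(1)$, so the entire burden falls on your ``iterative shrinking'', but the operation you specify --- reroute a copy of $H$ to swallow an uncovered vertex $v$ ``paying one cluster vertex in exchange'' --- leaves the number of uncovered vertices unchanged; it merely swaps which vertex is exposed. You give no potential function or termination argument showing the exceptional set actually decreases. Worse, at the threshold $\bigl(1-1/\chi_{cr}(H)\bigr)n$ a universal constant-size absorber (one working for \emph{every} admissible small $U$) need not exist: perfect $H$-tilings can fail even when $k\mid n$, which is precisely why a constant leftover appears in the statement. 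The real proof in \cite{shokou03} needs a substantially finer analysis of the reduced graph and of near-extremal configurations to make the leftover depend only on $k$ (and not on the regularity parameter $\epsilon$ or the number of clusters); none of that is present here. It is worth noting that for the paper's application in Proposition~\ref{prop:almost factors2} a reduced-graph tiling missing only $o(\ell)$ clusters would have sufficed, so the part of your argument that does work covers that use case --- but it does not prove the stated theorem.
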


We will apply Theorem~\ref{H-tiling} in the case where $H=K_{1,k-1}$, the star with $k$ vertices. For this $H$ note that
 $\chi_{cr}(H)=\tfrac{k}{k-1}$. We are now ready to prove Proposition~\ref{prop:almost factors} cases $(2)$ and $(4)$.
\begin{proof}[Proof of Proposition~\ref{prop:almost factors} $(2)$ and $(4)$]
Fix $k,r\in \mathbb{N}$ with $k\ge 3$, $\eta\ge\tfrac{1}{k}$ and a $k$-vertex graph $F$ such that either $F=K_k$ and $r=k-1$ or $F=C_k$ and $r=2$. Furthermore, let $\mu,\de>0$ be given and choose $0<\alpha \ll \epsilon \ll \tfrac{1}{N} \ll \tfrac{1}{K}\ll d\ll \mu, \de, \tfrac{1}{k}$.  Let $n\in \N$ be sufficiently large and let $G=(V,E)$ be an $n$-vertex graph with $\de(G)\ge (\eta+\mu) n$ and $\alpha_r(G)\le \alpha n$.

Applying the Regularity Lemma (Lemma \ref{reg}),  we have a partition $\mathcal{P}=\{V_0, V_1,\ldots, V_{\ell}\}$ of $V(G)$ for some $\ell\in\mathbb{N}$ with $N\le\ell<M=M(\epsilon)$, and a spanning subgraph $G'$ of $G$ with the properties $(\textbf{R1})-(\textbf{R3})$ as in the statement of Lemma~\ref{reg}.
Let $R=R_d$ be the reduced graph  on vertex set $[\ell]$ defined by the resulting partition. Then a
straightforward counting argument shows that $R$ has minimum degree at least $\ell/k$. Indeed, for each fixed $v\in V_i$, it holds that \[d^G(v)-(d+\epsilon)n\le d^{G'}(v)\le d^R(i)L+|V_0|,\] where the first inequality follows from property $(\textbf{R2})$ and the last inequality from $(\textbf{R1})$ and $(\textbf{R3})$. By the assumption $\de(G)\ge\frac{n}{k}+\mu n $, our upper bound on $|V_0|$ in $(\textbf{R1})$ and the choice of $d,\epsilon$, it follows that \[\de(R)\ge \frac{\frac{n}{k}+\mu n-(d+2\epsilon)n}{L}\ge \frac{n}{kL}\ge \frac{\ell}{k}.\]
Therefore, we can apply Lemma \ref{H-tiling}  (using that $\ell\ge N=N(k)$) and conclude  that for  $H=K_{1,k-1}$, $R$ contains a $H$-tiling  $\mathcal{M}=\{H_1,H_2,\ldots,H_s\}$ that covers all but at most $K$ vertices in $R$. Note that each copy $H_i$ of $H=K_{1,k-1}$ in $R$ corresponds to a collection $\{V_{i1},V_{i2},\ldots,V_{ik}\}$ of $k$ clusters  in the original partition $\mathcal{P}$, such that each pair $(V_{i1},V_{ij})$ is $\epsilon$-regular with density at least $d$ for $j\in[2,k]$. We will now show how to greedily pick  vertex-disjoint copies of $F$ that cover almost all the vertices in $\bigcup_{j=1}^{k} V_{ij}$. For convenience, we shall illustrate the general idea by simply taking sets $V_{1},V_{2},\ldots,V_{k}$ with $V_j=V_{1j}$ for $j\in [k]$.   

Now we proceed by partitioning $V_1$ arbitrarily into $k-1$ subsets $U_{2}, U_{3}, \ldots, U_{k}$ of  equal size (or as close to equal size as possible).  We claim that for each $2\le i\le k$, $G[U_i\cup V_i]$ admits an $F$-tiling covering all but at most $k\epsilon L$ vertices in $U_i\cup V_i$. Indeed, each such copy of $F$ inside $G[U_i\cup V_i]$ may be chosen greedily such that it contains exactly one vertex inside $U_i$. Suppose that we have already chosen such an $F$-tiling $\mathcal{I}_i$ inside $U_i\cup V_i$ that leaves at least $k\epsilon L+1$ vertices uncovered. Taking $U_i'=U_i\setminus V(\mathcal{I}_i)$ and $V_i'=V_i\setminus V(\mathcal{I}_i)$, we observe that $|U_i'|\ge \epsilon L$, $|V_i'|\ge (k-1)\epsilon L$. Since $(V_1,V_i)$ is an $\epsilon$-regular pair, it follows that $d(U_i',V_i')\ge d-\epsilon$ and therefore some vertex $x\in U_i'$ has at least \[(d -\epsilon)|V_i'|>\frac{d}{2}(k-1)\epsilon L>k\alpha n\]
neighbours inside $V_i'$, due to our choice of constants. Thus by the assumption that  $\alpha_{r}(G)\leq \alpha n$ (using Corollary \ref{sumner} for the case that $F=C_k$), there exists a subset $S_x\< V_i'$ of $k-1$ vertices, which together with $x$ yields a copy of $F$ that can be added to $\mathcal{I}_i$.  

From the above (and the fact that we can run the same process with $V_j=V_{ij}$ for all $i\in [s]$), we find an $F$-tiling for $G$ that leaves at most $|V_0|+KL+s(k-1)k\epsilon L\le\epsilon n+K\epsilon n+(k-1)\epsilon n\le\de n$  vertices uncovered, where the last inequality follows from the choice of $\epsilon$. This completes the proof of Proposition \ref{prop:almost factors}.
\end{proof}


\section{Constructions}\label{5}

In this section, we shall give short proofs of Theorem \ref{thm:RTTeverything} \ref{RTTii} and \ref{RTTv}  as well as Theorem~\ref{thm:stareverything} \ref{RTT*2} and \ref{RTT*4}. We first introduce  a simple dense graph that will guarantee our minimum degree conditions.

\begin{defn} \label{def:G0}
For $n\in N$ and  $\eta>0$ let $G_0=G_0(\eta)$ be an $n$-vertex graph with a vertex partition $V(G_0)=X_1\cup X_2$ such that
\begin{enumerate}
  \item[(i)] $|X_1|=\eta n$ and $G_0[X_1]$ is a clique,
  \item[(ii)] $G_0[X_1, X_2]$ is a complete bipartite graph, and
  \item[(iii)] $G_0[X_2]$ is an independent set.
\end{enumerate}
In other words, $G_0$ is simply an $n$-vertex clique with a clique of size $(1-\eta)n$ (on vertex set $X_2$) removed to leave an independent set.
\end{defn}

Next we give some  sparse probabilistic constructions with desired properties, where we use $g(G), $  $\omega(G)$ to denote the length of a shortest cycle and the order of a largest clique in $G$, respectively. The first construction is essentially due to Erd\H{o}s~\cite{erdos1959graph} and can be proven easily using the first moment method and alterations. 

\begin{lemma}\label{G1}
For any constants $\alpha>0$, $k\in \mathbb{N}$ with $k\ge3$, for all sufficiently large $n$, there exists an $n$-vertex graph $G_1$  such that $\alpha^*(G_1)<\alpha n$ and $g(G_1)>k$.

\end{lemma}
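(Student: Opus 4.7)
The plan is to apply the standard alteration method to a sparse Erd\H{o}s--R\'enyi random graph. Fix $k\ge 3$ and $\alpha>0$, and sample $G \sim G(2n, p)$ with $p = C/n$ for a constant $C = C(\alpha, k)$ to be chosen large enough.

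First, I would bound the expected number $X$ of cycles of length at most $k$ in $G$. Since there are at most $(2n)^\ell / (2\ell)$ potential $\ell$-cycles and each appears with probability $p^\ell$, we obtain
\[
\mathbb{E}[X] \le \sum_{\ell = 3}^{k} \frac{(2n)^\ell p^\ell}{2\ell} = \sum_{\ell=3}^k \frac{(2C)^\ell}{2\ell} =: K_0,
\]
a constant depending only on $C$ and $k$. By Markov's inequality, $X \le 2K_0$ with probability at least $1/2$.

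Next, I would show that $\alpha^*(G) < \alpha n$ holds with probability $1 - o(1)$ via a union bound. For any fixed pair of disjoint sets $U_1, U_2 \subset V(G)$ with $|U_1| = |U_2| = \lceil \alpha n \rceil$, the probability that no edge appears between them is at most $(1-p)^{\alpha^2 n^2} \le \exp(-C \alpha^2 n)$, and the number of such pairs is at most $\binom{2n}{\alpha n}^{2} \le (2e/\alpha)^{2 \alpha n}$. Choosing $C$ large enough in terms of $\alpha$ makes the resulting union bound $o(1)$. With $C$ chosen to satisfy both conditions, there exists a realisation of $G$ with $X \le 2K_0$ and $\alpha^*(G) < \alpha n$.

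Finally, I would remove one vertex from each cycle of length at most $k$ in $G$, producing a graph $G'$ on at least $2n - 2K_0 \ge n$ vertices (for $n$ sufficiently large) with $g(G') > k$, and still $\alpha^*(G') \le \alpha^*(G) < \alpha n$, since bipartite holes cannot grow under vertex deletion. Taking any induced subgraph $G_1$ of $G'$ on exactly $n$ vertices preserves both properties (subgraphs do not decrease girth, and an $(s,s)$-bipartite hole in $G_1$ remains an $(s,s)$-bipartite hole in $G'$). The only delicacy is the balance in choosing $p$: it must be large enough that the union bound over bipartite holes beats the $(2e/\alpha)^{2\alpha n}$ pair count, yet small enough that the expected number of short cycles stays bounded independently of $n$. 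Both requirements are met simultaneously by $p$ of order $1/n$ with a sufficiently large constant $C$.
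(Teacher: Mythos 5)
Your proposal is correct and follows essentially the same route as the paper: first-moment bounds on $G(2n, C/n)$ for short cycles and for bipartite holes of size $\alpha n$, Markov's inequality, and alteration by deleting one vertex per short cycle and then trimming to exactly $n$ vertices (using monotonicity of girth and of $\alpha^*$ under passage to induced subgraphs). The only cosmetic difference is that you bound the number of pairs of disjoint $\alpha n$-sets by $\binom{2n}{\alpha n}^2 \le (2e/\alpha)^{2\alpha n}$, whereas the paper uses the cruder $16^n$; both succeed once $C$ is taken large relative to $\alpha$.
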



The next lemma can be shown by similar methods to Lemma~\ref{G1}, namely using the first moment method and alterations. Indeed, taking $p=\Theta(n^{-x})$ for some $\tfrac{2}{r+1}<x<\tfrac{2}{r}$, the random graph  $G(n,p)$ has $o(n)$ copies of $K_{r+1}$ and a copy of $K_{r}$ in every linear sized set, with high probability.
There is also an explicit construction of  Erd\H{o}s and Rogers \cite{erdos-rogers} which achieves the same properties.

\begin{lemma}\label{G2}
For any constant $\alpha>0$, $r\in \mathbb{N}$ with $r\ge2$, there exists an $n$-vertex graph $G_{\ell}$ for sufficiently large $n$ such that $\alpha_{r}(G_r)<\alpha n$ and $\omega(G_r)<r+1$
\end{lemma}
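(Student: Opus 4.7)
The plan is to implement the probabilistic construction with alteration sketched immediately after the lemma statement. Fix an exponent $x \in \bigl(\tfrac{2}{r+1}, \tfrac{2}{r}\bigr)$, set $p = n^{-x}$, and sample $G' \sim G(2n, p)$; I will obtain $G_r$ from a suitable realisation of $G'$ by a short deletion step.

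The first step is to control the $K_{r+1}$ count. Its expectation is $\binom{2n}{r+1} p^{\binom{r+1}{2}} = \Theta\bigl(n^{(r+1)(1-xr/2)}\bigr) = o(n)$, since $x > \tfrac{2}{r+1}$ makes the exponent strictly below $1$. Markov's inequality then yields that with probability at least $3/4$, $G'$ contains fewer than $n$ copies of $K_{r+1}$.

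The main step is to show that, with probability tending to $1$, every subset $A \subset V(G')$ of size $\tfrac{\alpha n}{2}$ already contains a copy of $K_r$. I would apply Janson's inequality to the indicators of $K_r$'s inside $A$. The expected count is $\mu = \Theta\bigl(n^{r(1-x(r-1)/2)}\bigr)$, which is $\Omega(n^{1+\varepsilon})$ for some $\varepsilon = \varepsilon(x,r) > 0$ precisely because $x < \tfrac{2}{r}$. The Janson correlation sum decomposes over overlaps $k \in \{2,\dots,r-1\}$ of pairs of $K_r$'s, and a short direct computation gives $\mu^2/\Delta_k = \Theta(n^{f(k)})$ with $f(k) = k\bigl(1 - x(k-1)/2\bigr)$; concavity of $f$ together with $f(2) = 2-x > 1$ and $f(r-1) > 2(r-1)/r \ge 1$ shows $f(k) \ge 1 + \varepsilon'$ throughout the relevant range (vacuously when $r = 2$). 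Janson then yields
\[
\mathbb{P}\bigl[A\ \text{spans no}\ K_r\bigr] \le \exp\bigl(-\Omega(n^{1+\varepsilon'})\bigr),
\]
and union-bounding over the at most $4^n$ choices of $A$ still leaves the failure probability $o(1)$.

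Combining these two events, a positive-probability realisation of $G'$ has both properties. Deleting one vertex from each $K_{r+1}$ and then further arbitrary vertices until exactly $n$ remain produces $G_r$. By construction $\omega(G_r) < r+1$; and any $S \subseteq V(G_r)$ of size at least $\alpha n$ is a subset of $V(G')$ of size at least $\tfrac{\alpha n}{2}$ and hence contains a $K_r$, so $\alpha_r(G_r) < \alpha n$. The main technical obstacle is the Janson step: it is exactly the range $x < \tfrac{2}{r}$ that gives the polynomial slack between $\mu^2$ and $\Delta$ needed to beat the $4^n$-fold union bound, and this is what the concavity argument on $f$ quantifies. Alternatively, one could bypass probabilities and simply cite the explicit Erd\H{o}s--Rogers construction mentioned in the lemma preamble.
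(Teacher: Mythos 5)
Your argument correctly fills in the probabilistic alteration proof that the paper only sketches in one sentence: the first-moment bound on $K_{r+1}$-copies, the Janson estimate (with the concavity observation on $f(k)=k\bigl(1-x(k-1)/2\bigr)$ giving polynomial slack over the $4^n$-fold union bound), and the final vertex-deletion step are all sound, and the range $\tfrac{2}{r+1}<x<\tfrac{2}{r}$ is exactly what both halves require. Since the paper provides no further detail beyond this sketch and a pointer to the explicit Erd\H{o}s--Rogers construction, your proof is essentially the intended one.
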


 We are now in a position to prove Theorem \ref{thm:RTTeverything}~\ref{RTTii} and \ref{RTTv} and Theorem \ref{thm:stareverything} \ref{RTT*2} which all follow the same scheme. For Theorem \ref{thm:RTTeverything}~\ref{RTTii}  we will in fact prove the following more general proposition giving lower bounds for $\RTT_r(K_k)$.

\begin{prop} \label{prop:general lower}
For $r,k\in \N$ with $1\leq r\le k-1$ and $k\ge 3$, $\RTT_r(K_k)\ge 1-\tfrac{r}{k}$.
\end{prop}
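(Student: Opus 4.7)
The plan is to give an explicit construction generalising that of Proposition~\ref{prop:lower K_k alpha_k-1}: I combine the dense graph $G_0=G_0(\eta)$ from Definition~\ref{def:G0} with a sparse $K_{r+1}$-free graph supplied by Lemma~\ref{G2}. Precisely, given $\alpha>0$, choose $\alpha'$ suitably small and, setting $n_2:=(1-\eta)n$, let $H$ be an $n_2$-vertex graph with $\alpha_r(H)\le \alpha' n_2$ and $\omega(H)<r+1$, as provided by Lemma~\ref{G2}. Form $G$ by placing $H$ on the vertex set $X_2$ of $G_0$ (so that $G_0$'s independent set is replaced by the $K_{r+1}$-free graph $H$), while keeping the clique on $X_1$ and the complete bipartite graph between $X_1$ and $X_2$ unchanged.

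Verifying the minimum degree condition is immediate: vertices in $X_1$ have degree $n-1$, and every vertex in $X_2$ is joined to all of $X_1$, so $\delta(G)\ge |X_1|=\eta n$. For the $\alpha_r$ condition, consider any $K_r$-free $S\subset V(G)$ and write $S_i=S\cap X_i$. Since $X_1$ induces a clique in $G$ and every vertex of $X_2$ is fully joined to $X_1$, a $K_r$ in $G[S]$ is equivalent to picking $j$ vertices from $S_1$ and a $K_{r-j}$ inside $H[S_2]$. So $|S_1|\le r-1$, and $H[S_2]$ must be $K_{r-|S_1|}$-free; but $K_s$-free implies $K_r$-free for any $s\le r$, hence $|S_2|\le \alpha_r(H)\le \alpha' n_2$. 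Choosing $\alpha'$ small enough ensures $|S|\le \alpha n$.

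For the tiling bound, the key observation is that since $H$ is $K_{r+1}$-free, every copy of $K_k$ in $G$ uses at most $r$ vertices from $X_2$ and therefore at least $k-r$ vertices from $X_1$. Any $K_k$-tiling in $G$ thus contains at most $|X_1|/(k-r)=\eta n/(k-r)$ copies and covers at most $\tfrac{k\eta}{k-r}\,n$ vertices. For $\eta<1-\tfrac{r}{k}=\tfrac{k-r}{k}$, we have $\mu:=1-\tfrac{k\eta}{k-r}>0$, so every $K_k$-tiling in $G$ misses at least $\mu n$ vertices. This far exceeds the constant $\lfloor 1/\eta\rfloor(k-1)$ permitted by an $\eta$-quasiperfect tiling, establishing $\RTT_r(K_k)\ge 1-r/k$.

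There is no real obstacle: once Lemma~\ref{G2} is taken for granted, the construction is essentially forced, and the analysis reduces to two straightforward observations (the clique $X_1$ forces $|S_1|\le r-1$, and the $K_{r+1}$-freeness of $H$ forces $\ge k-r$ vertices from $X_1$ in every copy of $K_k$). The only minor point to keep track of is the monotonicity $\alpha_{s}(H)\le \alpha_r(H)$ for $s\le r$, used in the independence-number computation.
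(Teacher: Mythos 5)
Your construction and analysis match the paper's proof of Proposition~\ref{prop:general lower} essentially verbatim: both combine $G_0(\eta)$ with the $K_{r+1}$-free graph of small $r$-independence number supplied by Lemma~\ref{G2}, and both observe that each copy of $K_k$ must use at least $k-r$ vertices of $X_1$, so any $K_k$-tiling covers at most $\tfrac{k\eta}{k-r}n$ vertices. The one cosmetic difference is that the paper defines $G$ as the union $G_0\cup G_r$ on the full $n$-vertex set, making $\alpha_r(G)\le\alpha_r(G_r)$ immediate by edge-monotonicity of $\alpha_r$, whereas you place $H$ only on $X_2$ and check $\alpha_r(G)\le\alpha n$ by a short direct casework argument; both work, and the paper additionally spells out the degenerate case $r=1$ by taking $G_r$ to be edgeless.
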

\begin{proof}
Fix $1\le r\le k-1$ as in the statement and let $\alpha>0$ and $0<\eta <1-\tfrac{r}{k}$ be given. Let $n$ be sufficiently large and define $\mu=1-\tfrac{k\eta}{k-r}>0$. We will show that there exists an $n$-vertex graph $G$ with $\delta(G)\ge \eta n$ and $\alpha_r(G)\le \alpha n$ such that every $K_k$-tiling in $G$ covers at most $(1-\mu)n$ vertices.

Indeed  let $G_0=G_0(\eta)$ be the $n$-vertex graph with vertex partition $V(G_0)=X_1\cup X_2$ as given by Definition~\ref{def:G0} and let $G_r$ be the $n$-vertex graph with $\alpha_{r}(G_r)<\alpha n$ and $\omega(G_r)<r+1$ given by Lemma~\ref{G2} (if $r=1$ simply let $G_r$ be the empty graph on $n$ vertices). Finally, let  $G=G_0\cup G_r$ which we claim has the desired properties.
Indeed it easy to see that  $\de(G)\ge \de(G_0)\ge  \eta n$ and $\alpha_{r}(G)\le \alpha_{r}(G_r)<\alpha n$.  Finally, as $\omega(G_r)<r+1$, every copy of $K_k$ must intersect $X_1$ in at least $k-r$ vertices. Thus every $K_k$-tiling in $G$ contains at most $\tfrac{|X_1|}{k-r}=\tfrac{\eta}{k-r} n$ vertex-disjoint copies of $K_k$, which cover at most $\tfrac{k\eta}{k-r} n=(1-\mu)n$ vertices.
\end{proof}

Our proof of Theorem \ref{thm:RTTeverything} \ref{RTTv} and Theorem~\ref{thm:stareverything} \ref{RTT*2} are similar, using the graph $G_1$ from Lemma~\ref{G1} instead of $G_r$. We omit the details here and
end the section by giving a short proof of Theorem \ref{thm:stareverything} \ref{RTT*4} which states that $\RTT^*_2(K_3)\geq\frac{1}{2}$.  
We will use so-called (near)-Ramanujan graphs. For an $n$-vertex $d$-regular graph $G$, we let $\lambda(G)$ denote the \emph{second eigenvalue} of $G$ which is defined to be $\lambda(G)=\max\{|\lambda_2|,|\lambda_n|\}$ where $\lambda_1\ge\lambda_2\ge\ldots\ge\lambda_n$ is the spectrum of the adjacency matrix of $G$. 
We will use the following corollary to a result of Friedman \cite[Theorem 1.1]{Fri08} on random $d$-regular graphs, which  confirms the existence of \emph{near}-Ramanujan graphs for every even $d\ge4$.
\begin{theorem}\emph{\cite{Fri08}}\label{Friedman}
For any even $d \ge 4$ and $\mu > 0$, there exists an $n_0\in \N$ such that for all $n\ge n_0$,  there exists a $d$-regular $n$-vertex graph $G_0$ with $\lambda(G_0)\le 2\sqrt{d-1}+\mu$.
\end{theorem}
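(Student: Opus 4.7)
The plan is to establish existence via the probabilistic method: show that for $n$ large, a uniformly random $d$-regular graph $G$ on $n$ vertices satisfies $\lambda(G)\le 2\sqrt{d-1}+\mu$ with positive probability. Because $d$ is even, the most convenient model is the \emph{permutation model}: sample $d/2$ independent uniformly random permutations $\sigma_1,\dots,\sigma_{d/2}$ of $[n]$ and include every edge $\{i,\sigma_j(i)\}$. This yields a possibly multi-edged $d$-regular graph, but the event that it is simple has probability bounded below by a positive constant (depending only on $d$), so it suffices to prove the spectral bound in the permutation model and then condition. Let $A$ be the adjacency matrix; the Perron eigenvalue $d$ is associated to the all-ones eigenvector, so writing $A'=A-(d/n)J$ we have $\lambda(G)=\|A'\|$.

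The machinery is the classical trace method. For an even integer $k=k(n)$ of order $c\log n$ with $c$ small,
\[
\|A'\|^{2k}\le \operatorname{Tr}\bigl((A')^{2k}\bigr),
\]
so by Markov it suffices to show $\mathbb{E}\bigl[\operatorname{Tr}((A')^{2k})\bigr]\le n\bigl(2\sqrt{d-1}+\mu/2\bigr)^{2k}$. Expanding, $\operatorname{Tr}(A^{2k})$ counts closed walks of length $2k$ in $G$, and the expectation reorganizes as a sum over isomorphism types of the underlying edge-labelled multigraph $H$ of the walk. In the permutation model, the probability that a prescribed walk actually appears factorizes over the constraints imposed on each $\sigma_j$, and a careful count yields a contribution of order $n^{v(H)}\cdot(\text{combinatorial factor})\cdot n^{-e(H)}$ for each type, with additional $(d-1)$-factors from the choice of moves. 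The dominant term comes from walks whose underlying graph is a tree with $k$ edges each used twice; standard Catalan-number asymptotics give the main term $n\cdot C_k(d-1)^k\sim n(2\sqrt{d-1})^{2k}/k^{3/2}$, which is precisely the bound one wants. Every other isomorphism type has positive \emph{excess} $e(H)-v(H)+1\ge 1$ and should contribute a polynomial saving in $n$.

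The main obstacle is that this naive accounting breaks down for so-called \emph{tangles}: subgraphs of bounded size whose spectral radius already exceeds $2\sqrt{d-1}$ (for instance short cycles visited many times, or two short cycles sharing a path). Friedman's solution, and the source of the real technical depth, is a two-part argument: first, introduce a \emph{selective trace} that discards walks whose combinatorial graph contains a tangle, and prove by a delicate variant of the above expectation calculation that the selective trace satisfies the desired bound; second, show separately, using that short cycles in a random regular graph are rare and well-separated (their number converging to a Poisson limit), that with high probability no induced subgraph on $o(\log n)$ vertices has spectral radius exceeding $2\sqrt{d-1}+\mu/2$, so the tangles one excised cannot themselves destroy the bound on $\lambda(G)$. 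A cleaner alternative, which I would actually pursue, is Bordenave's route via the non-backtracking matrix $B$: its spectrum is linked to $A$'s through the Ihara--Bass formula, non-backtracking walks automatically suppress the most delicate overcounting, and the corresponding trace estimate $\mathbb{E}[\operatorname{Tr}((BB^*)^k)]$ can be controlled after a pre-processing step that deletes the $o(n)$ vertices lying on short cycles. The core labour in either route is the same: a precise combinatorial bound on the expected number of closed (non-backtracking) walks of each excess, and this quantitative bookkeeping --- rather than any single clever trick --- is where the real difficulty lies.
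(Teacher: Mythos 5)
The paper does not prove this statement: it is quoted verbatim as a known theorem of Friedman \cite{Fri08} and used as a black box in the construction of Lemma \ref{lem:lastconst}, so there is no in-paper argument to compare yours against. Your outline does faithfully reproduce the strategy of the actual proofs in the literature: the permutation model for even $d$ (with the correct observation that conditioning on simplicity costs only a constant factor, which is harmless for an existence statement), the reduction of $\lambda(G)$ to $\|A-(d/n)J\|$, the trace method with walk length $\Theta(\log n)$, the identification of tangles as the obstruction to the naive moment computation, Friedman's selective trace together with the separate Poisson-type argument that short cycles are rare and well-separated, and Bordenave's alternative via the non-backtracking operator and the Ihara--Bass identity. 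All of this is accurate.

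However, as a proof it has a genuine gap, and you are candid about where it is: the two estimates that carry the entire theorem --- the bound $\mathbb{E}[\mathrm{Tr}_{\mathrm{sel}}((A')^{2k})]\le n(2\sqrt{d-1}+\mu/2)^{2k}$ for the selective trace, and the verification that excising tangles does not affect $\lambda(G)$ --- are named but not established. The claim that ``every other isomorphism type has positive excess and should contribute a polynomial saving'' is precisely the assertion that fails without the selective trace, since walk types containing a tangle have expected counts that overwhelm the main term when $k=\Theta(\log n)$; making this work is the content of essentially all of \cite{Fri08} (roughly a hundred pages) or of Bordenave's paper. So what you have written is a correct and well-informed survey of the proof architecture rather than a proof. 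For the purposes of this paper the right move is exactly what the authors do: cite \cite{Fri08} and move on. If you wanted a self-contained substitute adequate for Lemma \ref{lem:lastconst}, note that any bound of the form $\lambda(G_0)\le C\sqrt{d}$ for an absolute constant $C$ would suffice there (only the sign of $\frac{d}{n_0}(\alpha n_0)^2-\lambda\alpha n_0$ matters, and $d$ is chosen large in terms of $\alpha$); such a bound follows from the much easier Broder--Shamir trace argument or from a Kahn--Szemer\'edi-type computation, avoiding Friedman's theorem entirely.
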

Now we will give a construction for the lower bound as follows.
\begin{lemma} \label{lem:lastconst}
For any $0<\alpha<1$, the following holds for all  sufficiently large integers $n\in3\mathbb{N}$, letting $d=2\lceil(\frac{4}{\alpha})^4\rceil$. There exists an $n$-vertex graph $G$ with $\delta(G)\ge \frac{n}{2}-2d^2$ and $\alpha^*(G)\le \alpha n$ such that $G$ contains no $K_3$-factor.
\end{lemma}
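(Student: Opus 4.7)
The plan is to build $G$ by joining two large cliques with a bipartite near-Ramanujan graph and carefully selecting the clique sizes so that a divisibility/structural obstruction rules out a $K_3$-factor. Fix $d = 2\lceil(4/\alpha)^4\rceil$ and a small $\mu>0$, and apply Theorem~\ref{Friedman} to obtain a $d$-regular bipartite graph $H$ on parts of size roughly $n/2$ with second eigenvalue $\lambda(H)\leq 2\sqrt{d-1}+\mu$. I would take $V(G)=A\cup B$ with $|A|,|B|$ close to $n/2$ but adjusted slightly so that neither $|A|$ nor $|B|$ is divisible by $3$ (which is always possible since $n\equiv 0\pmod 3$), declare $G[A]$ and $G[B]$ to be cliques, and let $G[A,B]$ be (a version of) $H$.

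The minimum degree bound is immediate: every $v\in A$ has $|A|-1$ clique-neighbours plus $d$ neighbours across, giving $\deg(v)\geq \min(|A|,|B|)-1+d\geq n/2-2d^2$, and similarly for $B$. For the bipartite-hole bound, suppose $(U_1,U_2)$ is a bipartite hole of size $\alpha n$. Since $G[A]$ and $G[B]$ are cliques, $U_1$ and $U_2$ cannot both intersect $A$ (nor both intersect $B$) without producing an edge, so up to relabelling $U_1\subseteq A$ and $U_2\subseteq B$. The Expander Mixing Lemma (Fact~\ref{fact:EML}) applied to $G[A,B]$ gives
\[
e(U_1,U_2)\ \geq\ \frac{d}{|A|}|U_1||U_2|-\lambda\sqrt{|U_1||U_2|}\ >\ 0
\]
once $d\geq (4/\alpha)^4$, so $\alpha^*(G)\leq\alpha n$.

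The heart of the argument, and its main obstacle, is proving that $G$ has no $K_3$-factor. Assuming a factor $\mathcal T$ exists, I would classify each triangle by the number $i\in\{0,1,2,3\}$ of its vertices in $A$ and set $x_i:=|\{T\in\mathcal T:|T\cap A|=i\}|$, so that
\[
3x_3+2x_2+x_1=|A|,\qquad x_2+2x_1+3x_0=|B|,\qquad x_0+x_1+x_2+x_3=n/3.
\]
The plan is to combine the non-divisibility of $|A|$ and $|B|$ modulo $3$ with structural constraints on the numbers $x_1,x_2$ of crossing triangles (each $AAB$-triangle requires a vertex $b\in B$ with two $A$-neighbours in the near-Ramanujan $G[A,B]$, and $ABB$-triangles symmetrically), using the expansion properties of $G[A,B]$ to show that no non-negative integer solution to this system can be realised by vertex-disjoint triangles in $G$.

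The hardest step is clearly the no-factor argument: the clique structure on $A$ and $B$ makes $x_3,x_0$ flexible, so naive divisibility does not suffice, and the obstruction has to exploit the tight interplay between divisibility of $|A|,|B|$ and the limited cross-triangle packings coming from the near-Ramanujan bipartite structure. The choice of $d=\Theta(1/\alpha^4)$ (rather than the $\Theta(1/\alpha^2)$ required by the Expander Mixing Lemma alone) should provide precisely the slack needed to turn this interplay into a contradiction while keeping $\delta(G)$ within the allowed tolerance $n/2-2d^2$ of $n/2$.
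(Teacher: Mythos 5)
Your construction fails at the key step, and the way it fails is instructive. With $G[A]$ and $G[B]$ taken to be \emph{full} cliques and $G[A,B]$ a $d$-regular bipartite graph with $d\ge 2$, there is \emph{always} a $K_3$-factor, regardless of how you tune $|A|,|B|\pmod 3$. Take any $a\in A$ with two neighbours $b_1,b_2\in B$ (such $a$ exists since $d\ge 2$): because $B$ is a clique, $\{a,b_1,b_2\}$ is a triangle. This single $ABB$-triangle (or its $AAB$-analogue) lets you repair any divisibility defect. Concretely, since $|A|+|B|=n\equiv 0\pmod 3$, the only cases where $3\nmid |A|$ are $(|A|,|B|)\equiv(1,2)$ or $(2,1)\pmod 3$; in the first, remove one $ABB$-triangle and the remaining $|A|-1$ and $|B|-2$ vertices each split into internal triangles inside their cliques, and in the second use one $AAB$-triangle. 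So your plan to ``use the expansion properties of $G[A,B]$ to show that no non-negative integer solution \dots\ can be realised'' cannot succeed: the obstruction you want is not there, because one or two crossing triangles already suffice and they are always available.

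The idea you are missing, and the one the paper uses, is to \emph{sparsify} the cliques so that crossing triangles simply do not exist. Starting from the $d$-regular bipartite graph $G^*$ on $V_1\cup V_2$, the paper adds edges inside each $V_i$ so as to make it as dense as possible \emph{subject to the constraint} that for every vertex $u$, the set $N_{G^*}(u)$ (the $d$ bipartite-neighbours of $u$ in the other part) remains an independent set. This deletes at most $d(d-1)<d^2$ potential within-part edges at each vertex, so $\delta(G')\ge n_0-1-d^2$, still within the claimed tolerance. But now a triangle $\{u,v,w\}$ with $u\in V_1$ and $v,w\in V_2$ would require $vw$ to be an edge of $G'[V_2]$ with $v,w\in N_{G^*}(u)$, which is explicitly forbidden; so \emph{every} triangle of $G'$ lies entirely inside $V_1$ or entirely inside $V_2$. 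At that point the no-factor argument is trivial: arrange $|V_1|\not\equiv|V_2|\pmod 3$ (the paper does this by deleting $2$ or $4$ vertices from one side), and a $K_3$-factor would have to restrict to a $K_3$-factor of $G[V_1]$ and one of $G[V_2]$, which is impossible since $3$ cannot divide both. Your bipartite-hole and minimum-degree calculations are fine and survive the sparsification, but without this extra edge-removal the lemma is false for your $G$.
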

\begin{proof}
Here we define a bipartite graph as follows: Let $n\in 3\N$ be sufficiently large and let $n_0=\ceil{\tfrac{n}{2}}+1$. Furthermore let  $G_0=(V,E)$ be an $n_0$-vertex $d$-regular graph such that $\lambda(G_0)\le2\sqrt{d-1}+\alpha$, whose existence is guaranteed by Theorem \ref{Friedman}. Then we take two identical copies $V_1$ and $V_2$ of $V$, and join $u\in V_1$ and $v\in V_2$ if $uv\in E(G_0)$. The resulting bipartite $d$-regular graph is denoted by $G^*$. Note that by the well-known Expander Mixing Lemma (see \cite[Theorem 2.11]{KS06}), $G^*$ satisfies that for any $A\<V_1,B\<V_2$  each of size $\alpha n_0$,
$
e(A,B)\ge \frac{d}{n_0}(\alpha n_0)^2-(2\sqrt{d-1}+\alpha)\alpha n_0>0.
$

Let $G'$ be obtained from $G^*$ by adding all possible edges inside each $V_i$ whilst maintaining that $S$ is an independent set of size $d$ whenever $S$ is the set of all neighbours of some vertex in $G^*$. Note that by the definition, $G'$ does not contain any triangles  whose vertex set intersects both $V_1$ and $V_2$. We also have  that $\delta(G')\ge n_0-1-d^2$ because every vertex is nonadjacent to at most $d^2$ vertices in the same part. Indeed, if $u$ and $v$ lie in the same part $V_i$ and are non-adjacent, then we must have that $u$ has distance 2 to $v$ in $G_0$.  In addition, we can easily check that $\alpha^*(G')\le \alpha n_0+d^2$.

Now if $n\in 6\N$ we have that  $2n_0=n+2$ whilst if $n\notin 6\N$ we have that $2n_0=n+4$. In either case deleting 2 (if $n\in 6\N$) or 4 (if $n\notin 6\N$) vertices  from $V_1$, we have that the resulting graph $G$ has $n$ vertices, $\alpha^*(G)\le \alpha n$ and $\de(G)\ge \de(G^*)-4\ge \tfrac{n}{2}-2d^2$.
Moreover, $G$ contains no $K_3$-factor. Indeed, there are no triangles with vertices in both $V_1$ and $V_2$ as previously noted and so any $K_3$-factor in $G$ must induce a $K_3$-factor in $G[V_1]$ and $G[V_2]$. However as $|V_1|\neq |V_2| \mod 3$ (due to how we deleted vertices from $G^*$), we have that this is impossible.

\end{proof}


\section{Concluding Remarks} \label{sec:conclude}

In this paper, we have explored the Ramsey--Tur\'an tiling functions $\RTT$ and $\RTT^*$, providing several upper and lower bounds for different regimes and graphs $F$.
There are many questions that remain open and a general understanding for all graphs $F$ seems challenging.   

For cycles and trees, however, we have close to the complete picture. Indeed for $k$-vertex trees $T_k$, we have shown that already $\RTT_2(T_k)=0$ and thus $\RTT_r(T_k)=0$ for all $r\geq 2$. With regards to $\RTT_1(T_k)$, the answer seems to depend on the structure of the tree. Indeed Koml\'os~\cite{kom00} proved that $\RTT_1(T_k)\ge
\left(1-\tfrac{1}{\chi_{cr}(T_k)}\right)$ (see Section~\ref{sec:almost} for the defintion of $\chi_{cr}$) whilst Shokoufandeh and Zhao~\cite{shokou03} proved that when $\de(G)\ge \left(1-\tfrac{1}{\chi_{cr}(T_k)}\right)n$ one can find a $T_k$-tiling covering all but a constant number of vertices, although the constant given is perhaps not optimal and does not guarantee a quasiperfect tiling. For $k$-vertex cycles, our results (see Theorem~\ref{thm:RTTeverything})  show that $\RTT_3(C_k)=0$ and $\RTT_2(C_k)=\tfrac{1}{k}$. Abbasi~\cite{abbasi1998solution} proved that $\RTT_1(C_k)=\tfrac{1}{2}$ when $k$ is even and $\RTT_1(C_k)=\tfrac{k+1}{2k}$ when $k$ is odd.   

For cliques, we make the following conjecture.

\begin{conj} \label{conj}
For $1\le r\le k\in \N$ with $k\ge 3$, $\RTT_r(K_k)=1-\tfrac{r}{k}$.
\end{conj}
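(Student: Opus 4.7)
The lower bound $\RTT_r(K_k)\ge 1-\tfrac{r}{k}$ is already established by Proposition~\ref{prop:general lower}, so the task is to prove the matching upper bound: that for any $\eta>1-\tfrac{r}{k}$ and sufficiently small $\alpha>0$, every $n$-vertex graph $G$ with $\delta(G)\ge \eta n$ and $\alpha_r(G)\le \alpha n$ contains an $\eta$-quasiperfect $K_k$-tiling. The plan is to follow the framework of Theorems~\ref{thm:K_k alpha_k} and \ref{thm:K_k alpha_k-1}, reducing to generalisations of Proposition~\ref{prop:Absorbing sets} (existence of an absorbing-set partition) and Proposition~\ref{prop:almost factors} (existence of an almost perfect tiling) that cover any $1\le r\le k$.

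For the almost perfect tiling, the plan is to extend the argument in the proof of Proposition~\ref{prop:almost factors2}. After applying the Regularity Lemma, the reduced graph $R$ satisfies $\delta(R)\ge \left(1-\tfrac{r}{k}+\tfrac{\mu}{2}\right)\ell$. Writing $k=(c-1)r+s$ with $1\le s\le r$ and $c=\lceil k/r\rceil$, the natural choice is to take $H:=K_{r,r,\ldots,r,s}$, the complete $c$-partite graph with parts of these sizes; a direct computation gives $\chi_{cr}(H)=\tfrac{(c-1)k}{k-s}=\tfrac{k}{r}$, so Theorem~\ref{H-tiling} provides an $H$-tiling of $R$ omitting only $O(1)$ clusters. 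Each copy of $H$ then corresponds to $k$ clusters of $G$ partitioned into super-clusters of sizes $r,r,\ldots,r,s$, with all pairs in distinct parts pairwise $\epsilon$-regular. Within such a tile, every large subset of the $i$th super-cluster contains a copy of $K_{|P_i|}$ (since $|P_i|\le r$ and $\alpha_{|P_i|}(G)\le\alpha_r(G)\le\alpha n$); a standard greedy packing argument using this together with the regularity between super-clusters should then yield a $K_k$-packing covering all but $o(L)$ vertices per tile, and hence an almost perfect $K_k$-tiling of $G$.

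For the absorbing set part, I would extend the lattice-based absorption method of Section~\ref{sec:lattice}. The template construction of Lemma~\ref{bip-temp+} applies uniformly once one produces linearly many vertex-disjoint absorbers for every $k$-set inside each part and linearly large $K_k$-fans at every vertex; the latter can be obtained by an iterated application of the $\alpha_r$-condition inside common neighbourhoods to produce the required $K_{k-1}$-copies in $N(v)$. The key ingredient for many absorbers is a generalised partition lemma in the spirit of Lemma~\ref{lem:detectpartition}. In this direction the reachability Lemma~\ref{reachable} needs to be strengthened, since for $r<k-1$ the $\alpha_r$-condition does not directly yield $K_{k-1}$-connectors; instead the natural target is $(K_k,\beta n,t)$-reachability for some constant $t=t(k,r)$, produced by iteratively bolting on extra $K_k$-gadgets using the minimum degree hypothesis. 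The overarching partition lemma (Lemma~\ref{partition}) then adapts to this higher-$t$ setting, since its core tool (Fact~\ref{fact:concatenate}) concatenates reachabilities without any restriction on $t$.

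The principal obstacle, and the reason the conjecture remains open, is the transferral analysis within the merging process of Lemma~\ref{lem:detectpartition}. Once the process terminates with a partition $\cP=\{V_1,\ldots,V_C\}$ one must show that between any two parts the number of edges is $o(n^2)$, by deriving a contradiction from the presence of a dense bipartite edge-set between two parts. The space of a priori robust index vectors consists of all $k$-tuples summing to $k$ with every entry at most $r$, and one must exhibit two such robust vectors that differ by exactly $\mathbf{u}_i-\mathbf{u}_j$ whenever $e(V_i,V_j)$ is large. In the $r=k-1$ case this was carried out via a subtle structural dichotomy using the index vectors $(k-1)\mathbf{u}_1+\mathbf{u}_2$ and $(k-2)\mathbf{u}_1+2\mathbf{u}_2$, together with a case distinction on whether $k\mathbf{u}_i$ itself is robust. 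For general $r$ such a clean dichotomy is not apparent, and one has to engineer robust vectors supported on potentially many parts while simultaneously ruling out a degenerate clique-structure matching the lower-bound construction of Proposition~\ref{prop:general lower}; this is where the main technical work lies.
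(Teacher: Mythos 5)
This statement is a \emph{conjecture} in the paper, not a theorem: the authors explicitly record only the cases $r\in\{1,2,k-1,k\}$ (the last two proved here, the first two due to Hajnal--Szemer\'edi and Knierim--Su respectively) and state that the intermediate range $3\le r\le k-2$ remains open. There is therefore no proof in the paper against which to compare; your proposal is, as you yourself acknowledge, a roadmap rather than a proof, and it correctly situates the lower bound in Proposition~\ref{prop:general lower} and correctly computes $\chi_{cr}(K_{r,\ldots,r,s})=k/r$ for the almost-tiling step.

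That said, the sketch understates the difficulty. You flag the transferral/merging analysis as the one principal obstacle, but there is a second gap earlier in the pipeline that your sketch does not survive. For the fan and reachability constructions you propose ``an iterated application of the $\alpha_r$-condition inside common neighbourhoods''. Quantitatively this fails already for $r=2$: starting from $N(v)$ of size $\ge(1-r/k+\mu)n$ and repeatedly passing to the common neighbourhood of a $K_r$ costs roughly $r(1-\eta)n\approx r^2 n/k$ per step, and one needs about $k/r$ steps to assemble $K_{k-1}$, for a total loss of order $rn$ which dwarfs $n$ as soon as $r\ge 2$. So the local structures that feed Lemmas~\ref{lem:large fans}, \ref{reachable}, and then \ref{partition} cannot be produced by the argument used for $r\in\{k-1,k\}$ (where a single invocation of $\alpha_{k-1}$ or $\alpha_k$ suffices); one needs something qualitatively stronger, e.g.\ a regularity-based or Knierim--Su-style structural argument, even just to verify that every vertex is reachable to linearly many others. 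This gap sits upstream of the transferral issue, and without resolving it the absorbing-set partition of Proposition~\ref{prop:Absorbing sets} cannot be constructed. The almost-tiling step via a $K_{r,\ldots,r,s}$-tiling of the reduced graph combined with iterated slicing across super-clusters is plausible in outline but would also need to be fleshed out. In short: the overall strategy is the natural one, your identification of the transferral analysis as a bottleneck is apt, but the local clique-building is an additional unresolved obstruction, and the statement should not be presented as proved.
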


The lower bounds of Conjecture~\ref{conj} follow from Proposition~\ref{prop:general lower} and for various values of $r$, the conjecture is established. Indeed, in this paper, we have shown Conjecture~\ref{conj} for $r=k$ and $r=k-1$ (see Theorem~\ref{thm:RTTeverything}). When $r=1$, Conjecture \ref{conj} is   the Hajnal-Szemer\'edi Theorem  \cite{hs} and the case $r=2$  (and $k\ge 4$) of Conjecture~\ref{conj} was recently shown by Knierim and Su~\cite{knier21}. Note that for $k=3,4$, we thus have the full picture for $\RTT(K_k)$ but for larger values of $k$, intermediate values for $r$ (namely, $3\le r\le k-2$) remain open. It would be very interesting to close this gap.  

For $\RTT^*$, the results for trees and cycles of length at least $4$ match those discussed above for $\RTT$. For cliques, Nenadov and Pehova~\cite{nenadov20} proved that $\RTT^*_k(K_k)=0$ and the results of Knierim and Su~\cite{knier21} and Hajnal and Szemer\'edi~\cite{hs} show that $\RTT^*_2(K_k)=1-\tfrac{2}{k}$ (for $k\ge 4)$ and $\RTT^*_1(K_k)=1-\tfrac{1}{k}$, respectively, as with $\RTT$. However it follows from our work here and that of Balogh, Molla and Sharidzadeh~\cite{balog16} that $\RTT^*_2(K_3)=\tfrac{1}{2}$. This separates $\RTT$ and $\RTT^*$ and it would be interesting to explore the behaviour of $\RTT^*$ in the intermediate range for larger cliques.   

Finally, we remark that  it would be  interesting to take a closer look at the boundary values where we see a jump in the behaviour. For example, our construction shows that when we impose that $\alpha_{k-1}(G)=o(n)$ and $\de(G)\ge \eta n$ for some $0<\eta<\tfrac{1}{k}$ it is possible to have graphs $G$ where any $K_k$-tiling leaves a linear number of vertices uncovered but when $\eta> \tfrac{1}{k}$ the number of uncovered vertices falls to a constant number. The transition when $\eta=\tfrac{1}{k}-o(1)$ remains mysterious.   \medskip

{\textbf{Acknowledgements:}}
We are very grateful to Andrew Treglown for enlightening discussions at the beginning of this project and for his  involvement with \cite{han2019tilings}, the proof methods of which had a strong bearing on this work. We are also indebted to Theo Molla, who suggested this problem to us. Finally, we would like to thank the anonymous referees for their thorough reading of the manuscript and many helpful suggestions.

\bibliographystyle{abbrv}
\bibliography{ref}
\end{document}